\documentclass[12pt,reqno]{amsart}

\usepackage{amsmath,amssymb,amsthm}
\usepackage{microtype}
\usepackage[hidelinks]{hyperref}
\usepackage{enumitem}
\usepackage{graphicx}

\newtheorem{theorem}{Theorem}[section]
\newtheorem{lemma}[theorem]{Lemma}
\newtheorem{proposition}[theorem]{Proposition}
\newtheorem{corollary}[theorem]{Corollary}
\theoremstyle{remark}

\numberwithin{equation}{section}

\newcommand{\T}{\mathbb T}
\newcommand{\R}{\mathbb R}
\newcommand{\A}{\mathcal A}
\newcommand{\M}{\mathcal M}
\newcommand{\dd}{\mathrm d}

\title[Sharp Convergence Rates for the Vanishing Discount]
{Sharp Convergence Rates for the Vanishing Discount Problem with Hyperbolic Aubry Sets}

\author{Panrui Ni}
\address[P. Ni]{
Department of Mathematics, Faculty of Science and Engineering,
Waseda University, 3-4-1 Okubo, Shinjuku-ku, Tokyo 169-8555, Japan;
Shanghai Center for Mathematical Sciences, Fudan University,
Shanghai 200438, China}
\email{panruini@gmail.com}

\date{\today}

\subjclass[2020]{35F21, 35B40, 37J51, 49L25}
\keywords{Hamilton--Jacobi equations, vanishing discount problem, convergence rates,
weak KAM theory}

\begin{document}

\begin{abstract}
Let $H\in C^2(T^*M)$ be a Tonelli Hamiltonian on a closed connected manifold and let
$u_\lambda$ solve
\[
  \lambda u_\lambda+H(x,Du_\lambda)=c(H)\qquad\text{in }M.
\]
We study the convergence rate of $u_\lambda$ to the selected critical
solution $u_0$.  Assume that the lifted Aubry set is a finite union $\widetilde{\A}=\Gamma_1\sqcup\cdots\sqcup\Gamma_N$, where each $\Gamma_i$ is either a hyperbolic equilibrium or a periodic orbit
hyperbolic in the critical energy level.  We prove
\[
  -C\lambda\le u_\lambda-u_0\le C\lambda|\log\lambda|.
\]
Let $\mu_i$ be the projected Mather measure associated with
$\Gamma_i$. If
\[
  \int_M u_0\,\dd\mu_i=0\qquad \text{for every }i,
\]
 then
\[
  \|u_\lambda-u_0\|_\infty\le C\lambda.
\]
In particular, if the lifted Aubry set consists of a single hyperbolic equilibrium or a single hyperbolic periodic orbit, the convergence rate is $O(\lambda)$.
We give examples showing that both convergence rates $O(\lambda)$ and $O(\lambda|\log\lambda|)$ are optimal. Without hyperbolicity, finite-order
degenerate examples give lower bounds of order
$\lambda^{1/(2r-1)}$ with $r\ge2$.  We also construct infinite-order
degenerate examples with
arbitrarily slow convergence. Taken together, these results provide, to our knowledge, the first systematic quantitative theory for the vanishing discount problem in the Tonelli setting.
\end{abstract}

\maketitle

\section{Introduction}

The vanishing discount approximation for Hamilton--Jacobi equations goes back
to the ergodic approximation introduced in the study of the cell problem;
see \cite{LPV}.  Consider
\begin{equation*}\tag{E$_\lambda$}\label{el}
  \lambda u_\lambda+H(x,Du_\lambda)=c(H)\qquad\text{in }M
\end{equation*}
and
\begin{equation*}\tag{E$_0$}\label{e0}
  H(x,Du)=c(H)\qquad\text{in }M.
\end{equation*}
The first issue is qualitative: does the whole family $u_\lambda$ converge as $\lambda\to 0^+$, and if so,
which solution of \eqref{e0} is selected?

Early selection results were obtained in \cite{GomesSelection} and,
under a hyperbolicity assumption, in 
\cite{IturriagaSM}. The latter proved convergence when the Aubry set consists
of finitely many hyperbolic critical points.  A general solution of the qualitative problem in the convex setting was
subsequently given in \cite{DFIZ} and \cite{MT}.
For continuous, coercive and convex Hamiltonians on a compact manifold, it was proved in \cite{DFIZ} that
$u_\lambda$ converges along the whole family $\lambda \to0^+$, and the selected
limit is characterized in terms of Peierls barriers and Mather measures.
It is worth mentioning that \cite{MT} uses the nonlinear adjoint method to treat the degenerate viscous term and includes the first-order
case. An alternative
dual viewpoint based on viscosity Mather measures was developed in \cite{IMT}.

Since then, the qualitative selection theory has been extended in many
directions.  These include discrete models \cite{DFIZdiscrete,NZ}, boundary value problems
\cite{IMT2,TuZhangState}, vanishing contact structures \cite{CCIKZ,WYZ}, nonlinear, degenerate, or
non-monotone discount terms \cite{CFZZ,DNYZ,NiPAMS,NYZ,Zav}, and noncompact domains
\cite{IshiiSiconolfi,TZ}.
Convexity remains essential
for the convergence result, and without it, convergence may fail
\cite{Ziliotto}. 

The quantitative problem has not undergone a parallel development.  A weak KAM theory for discounted
Hamilton--Jacobi equations was developed in \cite{MitakeSoga}. As one application, error estimates were obtained
under additional dynamical assumptions, both in one dimension and under special higher-dimensional assumptions.
Related work \cite{HJZZ} studied smooth
discounted subsolutions under hyperbolicity assumptions and used them to
control the long-time convergence of discounted Lax--Oleinik semigroups.
These results suggest that the minimizing dynamics can control the error, but
they do not give a sharp rate theory for general Aubry
sets. It is also worth comparing this with the large-time behavior of the Lax--Oleinik semigroup. In the vanishing discount problem considered here, hyperbolic equilibria and hyperbolic periodic orbits lead to the same rate estimates. By contrast, a time-independent example was constructed in \cite{WangYan} with a single hyperbolic periodic orbit in the Aubry set showing a different behavior in the large-time problem. Related rate
questions for infinite-horizon discounted optimal control have been studied
under different structural assumptions; see, for example,
\cite{CGMQ,GruneWirth}.  To our knowledge, no sharp convergence-rate theory
has been available for the vanishing discount problem with Tonelli Hamiltonians.  This is in
contrast with the extensive rate theories developed for other singular
limits of Hamilton--Jacobi equations, such as vanishing viscosity and periodic
homogenization; see \cite{CDI,CirantGoffi,MTY,QSTY,TranYu} and the references
therein.

\smallskip

The purpose of this paper is to develop such a quantitative theory when the
Aubry set is a finite union of hyperbolic components. Throughout the paper $M$ is a closed connected smooth manifold and
$H\in C^2(T^*M)$ is Tonelli. For $\lambda>0$, let $u_\lambda$ be the solution of \eqref{el} and let $u_0$ be the limit of $u_\lambda$.  We write $\A\subset M$ and
$\widetilde{\A}\subset T^*M$ for the projected and lifted Aubry sets,
respectively.
\smallskip

 Assume that
\begin{itemize}
\item[(A)] $\widetilde{\A}=\Gamma_1\sqcup\cdots\sqcup\Gamma_N$, where each $\Gamma_i$ is the lift of a static class and is either a hyperbolic equilibrium
or a periodic orbit hyperbolic in $H^{-1}(c(H))$. 
\end{itemize}
Each $\Gamma_i$ carries a unique Mather measure. We denote its projection
to $M$ by $\mu_i$ and set  $m_i:=\int_M u_0\,\dd\mu_i$.
By the selection characterization in \cite[Proposition 1.3]{DFIZ}, $m_i\le0$. We call $\Gamma_i$
\emph{active} when $m_i=0$. Lemma~\ref{lem:active-representation} shows that active components represent \(u_0\) through the Peierls barrier. Thus the quantities \(m_i\) enter both the selection mechanism and the convergence rate.

\smallskip

Our main result is the following.

\begin{theorem}\label{thm:finite-main}
Assume {\rm (A)}.
Then there exist $C>0$ and $\lambda_0>0$ such that, for
$0<\lambda\le\lambda_0$,
\begin{equation}\label{eq:intro-finite-rate}
  -C\lambda
  \le u_\lambda(x)-u_0(x)
  \le C\lambda|\log\lambda|
  \qquad\text{for every }x\in M.
\end{equation}
If $m_i=0$ for $i=1,\dots,N$,
then
\begin{equation}\label{eq:intro-active-rate}
  \|u_\lambda-u_0\|_\infty\le C\lambda.
\end{equation}
\end{theorem}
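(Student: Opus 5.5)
The plan is to use the discounted Lax--Oleinik / optimal control representation
\[
u_\lambda(x)=\inf_{\gamma(0)=x}\int_{-\infty}^0 e^{\lambda s}\bigl(L(\gamma,\dot\gamma)+c(H)\bigr)\,\dd s ,
\]
and to compare $u_\lambda$ with $u_0$ along two kinds of curves. Calibrated curves of $u_0$ give the lower bound. Curves that spend a long time near a hyperbolic component and then connect to $x$ give the upper bound. The dynamics enter only through the exponential approach of semi-static curves to the $\Gamma_i$.

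\textbf{Lower bound.} Take the minimizer $\gamma_\lambda$ for $u_\lambda(x)$. It is a backward orbit of a Tonelli flow with a small dissipative term, and its $\alpha$-limit behavior is governed by the Aubry set. Fix a critical subsolution $w$ that is $C^{1,1}$ and strict outside $\A$ (Bernard). Since $u_0$ is a critical subsolution,
\[
\int_{-T}^0 \bigl(L+c\bigr)\,\dd s\ \ge\ u_0(x)-u_0(\gamma(-T)).
\]
Insert the weight $e^{\lambda s}$ and integrate by parts; this gives
\[
u_\lambda(x)-u_0(x)\ \ge\ -\lambda\int_{-\infty}^0 e^{\lambda s}\,u_0(\gamma_\lambda(s))\,\dd s .
\]
The right-hand side is $-\lambda$ times an average of $u_0$ against a discounted occupation measure. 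This measure is close to a Mather measure, on which $\int u_0\,\dd\mu_i=m_i\le 0$. Because $u_0$ is bounded, the crude bound $-C\lambda\|u_0\|_\infty$ already suffices. So the lower bound $-C\lambda$ holds in general, with no hyperbolicity needed.

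\textbf{Upper bound.} Let $x\in M$. By Lemma~\ref{lem:active-representation}, $u_0(x)=\min_{\text{active }i}\min_{y\in \pi\Gamma_i}\bigl(u_0(y)+h(y,x)\bigr)$, with $h$ the Peierls barrier. Build a test curve in three pieces.
\begin{enumerate}
\item On $(-\infty,-T]$, follow the orbit $\Gamma_i$, where $L+c$ integrates to $0$ over periods, or stays at $0$ at an equilibrium. Its discounted contribution is $O(\lambda)$: average over a period and use the explicit weight.
\item On $[-T,-T+\tau]$, use a connecting segment along the unstable manifold of $\Gamma_i$ realizing $h(y,x)$.
\item On the remaining interval, use the calibrated curve ending at $x$.
\end{enumerate}
Hyperbolicity gives the key estimate. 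A semi-static curve from $\Gamma_i$ to $x$ approaches $\Gamma_i$ exponentially, so its action on $(-\infty,-t]$ differs from the barrier only by $O(e^{-\kappa t})$. The discounted action of this concatenated curve is then
\[
u_0(x)+\lambda\Bigl(T\,\cdot\,\text{(cost)}-\int u_0\,\dd\mu_i\cdot\tfrac1\lambda\cdot(\ldots)\Bigr)+O(e^{-\kappa t}),
\]
and more precisely the mismatch is $\lambda\bigl(C\,t+|m_i|/\lambda\cdot(1-e^{-\lambda t})\bigr)$-type terms. Choosing $t\sim\kappa^{-1}|\log\lambda|$ balances $e^{-\kappa t}$ against the linear-in-$t$ discounted error $\lambda t$, which yields $C\lambda|\log\lambda|$.

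\textbf{Sharpening to $O(\lambda)$.} When every $m_i=0$, the $O(\lambda t)$ term arises only as $\lambda\int_{-t}^0 (u_0(\gamma(s))-\bar u)\,\dd s$-type errors. Here $\bar u$ is the $\mu_i$-average of $u_0$, which is $0$. Along the approach to $\Gamma_i$, $u_0(\gamma(s))$ converges exponentially to its values on $\Gamma_i$, whose time average vanishes. At an equilibrium this is immediate. For a periodic orbit, the zero-mean function along the orbit has a bounded primitive, so these errors sum to $O(\lambda)$ uniformly in $t$. We may then let $t\to\infty$ and obtain $O(\lambda)$.

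The main obstacle is the upper bound: controlling the discounted action of the glued curve uniformly in $x$. This needs uniform exponential convergence of backward calibrated curves to the hyperbolic components, including handling of $x$ near $\A$ and near heteroclinic connections between different $\Gamma_j$. The first attempt would be to use the stable-manifold theorem for the Hamiltonian flow on $H^{-1}(c)$, combined with the graph property of $\widetilde{\A}$.
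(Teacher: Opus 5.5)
Your proposal has a genuine gap in the lower bound, and the upper bound is not yet a proof.

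\textbf{The lower bound.} Your argument loses a factor $1/\lambda$. Since $\lambda e^{\lambda s}\,\dd s$ is a probability measure on $(-\infty,0]$, the right-hand side of \eqref{eq:lower} is already minus the discounted occupation average of $u_0$ along $\gamma_\lambda$. Boundedness of $u_0$ therefore only yields $u_\lambda-u_0\ge-\|u_0\|_\infty$. To get $-C\lambda$ you need
\[
\int_{-\infty}^0 e^{\lambda s}u_0(\gamma_\lambda(s))\,\dd s\le C,
\]
i.e.\ the average must be at most $C\lambda$. This is a one-sided closeness of the discounted occupation measure to Mather measures at scale $\lambda$, exploiting $m_i\le0$. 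It genuinely needs hyperbolicity: the paper's degenerate examples violate it, with $u_\lambda(x_\lambda)-u_0(x_\lambda)\le-C_r\lambda^{1/(2r-1)}$ for $r\ge2$ in Proposition~\ref{prop:finite-order-rate}, and $u_\lambda(R)-u_0(R)\le-\omega(\lambda)$ in Proposition~\ref{prop:arbitrarily-slow}.

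The paper builds a bounded $C^2$ function $\Phi$ with $X_H\Phi=u_0\circ\pi_M-m_i$ on each $\Gamma_i$; on a periodic orbit this uses the periodic primitive of $u_0-m_i$. Then $m_i\le0$ gives $u_0\circ\pi_M\le X_\lambda\Phi+C\,d_{T^*M}(\cdot,\widetilde{\A})+C\lambda$ on $K$. Integrating along $Z_\lambda$ reduces the claim to $\int_{-\infty}^0e^{\lambda s}d_{T^*M}(Z_\lambda(s),\widetilde{\A})\,\dd s\le C$, which is Lemma~\ref{lem:finite-distance}.

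That lemma is where your strict subsolution $w$, introduced but never used, is needed. The associated $q\ge0$ satisfies $\int_{-\infty}^0 e^{\lambda s}q(Z_\lambda(s))\,\dd s\le C$. This bounds the weighted time outside $\mathcal U_\rho$. Because each last crossing from distance $\rho$ to $2\rho$ takes time at least $\rho/V$, it also bounds the weighted number of excursions $\sum_j e^{\lambda b_j}$. Each sojourn near $\Gamma_i$ is then controlled by the hyperbolic estimate $C\rho(e^{-\mu(s-a)}+e^{-\mu(b-s)})+C\lambda$. Near a periodic orbit this also uses the energy bound $|H(Z_\lambda)-c|\le C\lambda$.

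\textbf{The upper bound.} Your account of the logarithm (duration $t$ against defect $e^{-\kappa t}$, with $t\sim\kappa^{-1}|\log\lambda|$) is the paper's. However, the curve is never constructed, and the obstacle you flag at the end is the real difficulty. A semi-static curve from the active $\Gamma_i$ to $x$ need not exist. The barrier $h(a,x)$ may only be approached by curves that linger arbitrarily long near other, possibly inactive, components. In the two-well model of Section~\ref{sec:two-well}, for $x$ slightly right of $1/2$, every near-optimal route from $0$ crosses the inactive equilibrium $1/2$. Each unit of time spent near an inactive $\Gamma_j$ costs about $\lambda|m_j|$. Exponential convergence of calibrated curves to their $\alpha$-limits does not address this.

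The paper's Lemma~\ref{lem:peierls-compression} resolves it as follows.
\begin{enumerate}
\item Take finite-time minimizers for $h_{T_n}(a,x)$. These are conservative orbits of energy $c+e_n$, with $e_n\to0$ shown via $q$.
\item Bound their number of visits to $\mathcal U_{2\rho}$ uniformly, again via $q$.
\item Cut every visit longer than $2R$ at time $R$ after entry and time $R$ before exit. Lemmas~\ref{lem:eq-local} and~\ref{lem:per-local}(ii) place the cut points within $O(e^{-\kappa R})$ of the component.
\item Reconnect by two short geodesics and at most one period of the orbit. This gives duration $O(R)$ and defect $O(e^{-\kappa R})$.
\end{enumerate}
The dynamic programming principle from $a$, together with $u_\lambda(a)-u_0(a)\le C\lambda$ on active components, then concludes.

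Similarly, your $O(\lambda)$ sharpening treats only the final approach to one component. The paper instead needs $\int_{-\infty}^0 d_{T^*M}(Z_0(s),\widetilde{\A})\,\dd s\le C$ along the whole calibrated curve, which has a bounded number of passages by $q$-counting. It then applies the $\Phi$ argument to both \eqref{eq:upper} and \eqref{eq:lower}.
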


\begin{corollary}\label{cor:single-intro}
Assume that $\widetilde{\A}$ consists of either a single hyperbolic
equilibrium or a single periodic orbit which is hyperbolic in its energy
level.  Then
\[
  \|u_\lambda-u_0\|_\infty\le C\lambda
\]
for all sufficiently small $\lambda>0$.
\end{corollary}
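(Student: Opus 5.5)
The corollary follows from the second part of Theorem~\ref{thm:finite-main}. The plan is to check that when $\widetilde{\A}$ has a single component, that component is automatically active, i.e.\ $m_1=0$.

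First, assumption (A) holds with $N=1$. A single hyperbolic equilibrium or a single periodic orbit hyperbolic in its energy level is the whole lifted Aubry set. Since $\widetilde{\A}$ is nonempty and is the union of lifts of static classes, there is exactly one static class and $\Gamma_1=\widetilde{\A}$. For an equilibrium the relevant energy level is $H^{-1}(c(H))$, because the Aubry set lies in the critical energy level (Carneiro). Hence the hypothesis matches (A) exactly. Moreover, $\Gamma_1$ carries a unique Mather measure, so the set of projected Mather measures is the singleton $\{\mu_1\}$.

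Next, I will show $m_1=\int_M u_0\,\dd\mu_1=0$. By the selection characterization of \cite{DFIZ}, $u_0$ is the maximal subsolution $w$ of \eqref{e0} satisfying $\int_M w\,\dd\mu\le0$ for every projected Mather measure $\mu$. This gives $m_1\le 0$, as already recorded in the introduction. Suppose $m_1<0$. Then $w=u_0-m_1$ is again a (sub)solution of \eqref{e0}, since adding a constant preserves this. It satisfies $\int_M w\,\dd\mu_1=0\le0$, so it is admissible in the characterization. But $w>u_0$ everywhere, which contradicts maximality. Hence $m_1=0$.

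Alternatively, one can argue directly. Integrate \eqref{el} against the Mather measure: $\lambda\int u_\lambda\,\dd\mu_1\le0$ follows from the standard inequality $\int H(x,Du_\lambda)\,\dd\mu_1\ge c(H)$ in the weak sense, giving $\int u_\lambda\,\dd\mu_1\le0$. One also has $u_\lambda\ge u_0-o(1)$ on the Aubry set. Still, the maximality argument is the cleanest route.

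With $m_1=0$, the hypothesis of \eqref{eq:intro-active-rate} holds for $i=1,\dots,N$ with $N=1$. Theorem~\ref{thm:finite-main} then gives $\|u_\lambda-u_0\|_\infty\le C\lambda$ for $0<\lambda\le\lambda_0$. No step here is a real obstacle. The only point needing care is that the one-component hypothesis really is (A), namely that the equilibrium is hyperbolic and sits in the critical level; this is what the argument in the first step addresses.
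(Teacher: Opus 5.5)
Your proof is correct, and its structure is the paper's: show that the unique component is active ($m_1=0$), then invoke the second part of Theorem~\ref{thm:finite-main}. The difference is in how you justify $m_1=0$.

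The paper cites Lemma~\ref{lem:active-representation}. That lemma combines the Peierls-barrier formula $u_0(x)=\min_i\int_M h(y,x)\,\dd\mu_i(y)$ from \cite{DFIZ} with the domination bound $\int_M h(y,x)\,\dd\mu_i(y)\ge u_0(x)-m_i$. Together these force every minimizing index to be active.

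You use the other form of the \cite{DFIZ} selection principle instead. There $u_0$ is the largest critical subsolution $w$ with $\int_M w\,\dd\mu\le0$ for every projected Mather measure. If $m_1<0$, then $u_0-m_1$ would be a strictly larger admissible subsolution. Your route is more direct for this corollary, and shifting by $-\max_i m_i$ gives $I_*\neq\emptyset$ for any $N$. The paper's lemma additionally yields the pointwise representation $u_0(x)=u_0(a)+h(a,x)$ from an active component. The paper needs that for Lemma~\ref{lem:peierls-compression} anyway, so there the corollary comes essentially for free.

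Two small corrections. First, your ``alternative'' argument does not work. It only gives $\int_M u_\lambda\,\dd\mu_1\le0$, hence $m_1\le0$, which you already had. The remark $u_\lambda\ge u_0-o(1)$ contributes nothing toward $m_1\ge0$, so drop that paragraph.

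Second, the claim that the single periodic orbit is exactly one static class does not follow just from $\widetilde{\A}$ being a nonempty union of lifted static classes. The correct reason is that lifted static classes are closed, disjoint and invariant under the Hamiltonian flow. They therefore cannot split a single orbit.
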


Corollary \ref{cor:single-intro} is also closely related to Ma\~n\'e's conjecture, which predicts that,
generically, the Aubry set is either a hyperbolic equilibrium or a hyperbolic periodic
orbit; see \cite{CFR} and the references therein. Thus, if Ma\~n\'e's conjecture holds true, for generic Hamiltonians, the vanishing discount convergence rate is $O(\lambda)$.

\smallskip

We briefly describe the proof of Theorem~\ref{thm:finite-main}. In \cite{MitakeSoga}, the authors derive convergence rates by estimating the time needed for minimizing curves to enter a prescribed neighborhood of the corresponding sets of $\alpha$-limit points. In general, it is hard to estimate the entrance time. Instead, our proof proceeds in three steps:
\smallskip

\noindent{\it Step 1.} Integrating by parts yields \eqref{eq:upper} and \eqref{eq:lower}, which reduce the convergence rate
problem to weighted integrals of $u_0$ along a $u_0$-calibrated curve
$\gamma_0$ and a $u_\lambda$-calibrated curve
$\gamma_\lambda$.  We denote their cotangent lifts by $Z_0$ and
$Z_\lambda$, respectively.
\smallskip

\noindent{\it Step 2.} Let $X_H$ denote the Hamiltonian vector field of $H$, and let
$\pi_M:T^*M\to M$ be the canonical projection. We construct $\Phi$ near $\Gamma_i$,
with $X_H\Phi=u_0\circ\pi_M-m_i$ on $\Gamma_i$, and obtain \eqref{eq:Phi-distance}. If some $m_i<0$, this constant $m_i$ has a weighted integral of order
$1/\lambda$ on a half-line. We only have a one-sided estimate $u_\lambda(x)-u_0(x)\ge-C\lambda$. If $m_i=0$ for every $i$, by Lemma \ref{lem:finite-distance}, the weighted integrals of $u_0$ along both $\gamma_0$ and $\gamma_\lambda$ are bounded, which implies
the linear rate.
\smallskip

\noindent{\it Step 3.} For the upper bound of $u_\lambda-u_0$ we use a finite-time approximation of the
Peierls barrier; see Lemma \ref{lem:peierls-compression}. The key idea is a curve cutting argument, where hyperbolicity plays an important role; see Figure \ref{fig:peierls-compression}. Related curve cutting arguments have also proved useful in
quantitative homogenization; see \cite{TranYu}. The resulting curve has
duration $O(R)$ and an $O(e^{-\kappa R})$ action error.  Choosing $R\asymp|\log\lambda|$ yields the logarithmic rate.

\smallskip
The proof of Lemma \ref{lem:finite-distance} includes the following ingredients:
\smallskip

\noindent (i) By using the subsolution of \eqref{e0} which is strict outside the Aubry set, we define a function $q$ in \eqref{eq:q-def}. By Lemma \ref{lem:q}, $q$ is bounded from below by a positive constant away from a neighborhood $U$ of $\widetilde{\A}$
on the compact region visited by $Z_0$ and
$Z_\lambda$. By the definition of $q$, we have \eqref{eq:q0} and \eqref{eq:qlambda}. Thus, the integral of $q$ controls the total time spent outside $U$ for $Z_0$ and the corresponding weighted time for $Z_\lambda$.
\smallskip

\noindent (ii) If $Z_0$ and
$Z_\lambda$ visit the neighborhood $U$ once, the hyperbolicity estimates in
Lemmas~\ref{lem:eq-local} and~\ref{lem:per-local} give a uniform bound
for the integral of the distance for $Z_0$ and for the weighted
integral of the distance for $Z_\lambda$, even when the duration of the
visit is infinite.
\smallskip

\noindent (iii) It remains to control the visits of $Z_0$ and
$Z_\lambda$ to a neighborhood of the Aubry set.  We use two nested neighborhoods $U_\rho^i\subset U_{2\rho}^i$ of $\Gamma_i$.  Consider a connected visit to $U_{2\rho}^i$ that reaches $U_\rho^i$ and ends before time $0$.  After its last exit from $U_\rho^i$, the orbit must travel from $\partial U_\rho^i$ to $\partial U_{2\rho}^i$.  Since the speeds of $Z_0$ and $Z_\lambda$ are uniformly bounded, this crossing takes a uniformly positive amount of time.  Moreover, $q$ is bounded from below along the crossing.  Then \eqref{eq:q0} and \eqref{eq:qlambda} control the number of such visits for $Z_0$ and the weighted number of visits for $Z_\lambda$.  There is at most one additional terminal visit ending at time $0$.
\smallskip

Since there are only finitely many $\Gamma_i$, we choose the 
neighborhoods of $\Gamma_i$ pairwise disjoint and the local estimates
hold with common constants for all components $\Gamma_i$. The case of more
general hyperbolic Aubry sets 
is not treated here.

\smallskip

The two rates in Theorem~\ref{thm:finite-main} are both sharp.  An example given in Section~\ref{sec:one-well} with one hyperbolic equilibrium satisfies
\[
  \|u_\lambda-u_0\|_\infty\asymp\lambda.
\]
Another example has two hyperbolic equilibria, with
$m_1=0$ and $m_2<0$, and satisfies
\[
  \|u_\lambda-u_0\|_\infty
  \asymp\lambda|\log\lambda|.
\]
Thus, hyperbolicity by itself does not force the linear rate.  The numbers
$m_i=\int_Mu_0\,\dd\mu_i$ also have a direct role in the rate estimate.
If $m_i=0$ for every $\Gamma_i$, the weighted integrals in \eqref{eq:upper} and \eqref{eq:lower} are uniformly bounded, and the rate is linear.  When some $m_i<0$,
the
two-well example shows that the logarithmic loss can then occur.  
On the other hand, if the Aubry set consists of a single degenerate
equilibrium of order $2r$ with $r\ge2$, we construct examples with
\[
  \|u_\lambda-u_0\|_\infty
  \ge C_r\lambda^{1/(2r-1)}.
\]
Since $1/(2r-1)\to0$, no positive algebraic exponent can hold uniformly once
the hyperbolicity assumption is removed.  We also construct examples with
arbitrarily slow convergence.  More precisely, for every prescribed
nondecreasing function $\omega(\lambda)\to0$, there exists a smooth
one-dimensional example with $\A=\{0\}$ and with all derivatives of
the potential vanishing at $0$ such that
\[
  \|u_\lambda-u_0\|_\infty\ge\omega(\lambda)
\]
for all sufficiently small $\lambda>0$.

\subsection*{Organization of the paper} Section~\ref{sec:prelim} gives the
variational inequalities and the estimate obtained from a strict critical
subsolution.  Section~\ref{sec:hyperbolic} proves the local estimates near a
hyperbolic equilibrium and a hyperbolic periodic orbit.  Section~\ref{sec:finite-proof}
proves Theorem~\ref{thm:finite-main}.  Section~\ref{sec:one-well} gives the
one-well examples, showing sharpness of the rate $O(\lambda)$, finite-order degenerate examples with lower bounds of
order $\lambda^{1/(2r-1)}$, and arbitrarily slow convergence.  Section~\ref{sec:two-well}
proves sharpness of the logarithmic rate.

\section{Preliminary estimates}\label{sec:prelim}

\subsection{Variational inequalities}\label{subsec:variational}

Let $M$ be a compact connected smooth $n$-dimensional manifold without
boundary.  Fix Riemannian metrics on $M$ and on $T^*M$, and denote the
corresponding distances by $d_M$ and $d_{T^*M}$. Let $\pi_M:T^*M\to M$ denote the canonical projection. Let
$H\in C^2(T^*M)$ be Tonelli: $p\mapsto H(x,p)$ is strictly convex with
positive definite Hessian and is superlinear, uniformly in $x$.  Let
$L:TM\to\R$ be its Legendre transform and put $c=c(H)$. Throughout the paper, solutions, subsolutions, and supersolutions of Hamilton--Jacobi equations are understood in the viscosity sense. Recall that $\A\subset M$ and $\widetilde{\A}\subset T^*M$ denote the projected and lifted Aubry sets, respectively; see \cite{DFIZ}.

For $\lambda>0$, let $u_\lambda$ solve \eqref{el}. We denote by $u_0$ the selected limit characterized in \cite{DFIZ}.  The family
$\{u_\lambda:0<\lambda\le1\}$ is uniformly bounded and equi-Lipschitz. A backward $u_0$-calibrated curve
$\gamma_0:(-\infty,0]\to M$ satisfies, for every $r<t\le0$,
\[
 u_0(\gamma_0(t))-u_0(\gamma_0(r))
 =
 \int_r^t\bigl(L(\gamma_0(s),\dot\gamma_0(s))+c\bigr)\,\dd s.
\]
A backward $u_\lambda$-calibrated curve
$\gamma_\lambda:(-\infty,0]\to M$ satisfies
\[
 e^{\lambda t}u_\lambda(\gamma_\lambda(t))
 -e^{\lambda r}u_\lambda(\gamma_\lambda(r))
 =
 \int_r^t e^{\lambda s}
 \bigl(L(\gamma_\lambda(s),\dot\gamma_\lambda(s))+c\bigr)\,\dd s.
\]
We denote the cotangent lifts of $\gamma_0$ and $\gamma_\lambda$ by \[Z_0(s):=(\gamma_0(s),p_0(s))\qquad \text{and} \qquad Z_\lambda(s):=(\gamma_\lambda(s),p_\lambda(s)),\]
where $p_0(s)=L_v(\gamma_0(s),\dot\gamma_0(s))$, and $p_\lambda(s)=L_v(\gamma_\lambda(s),\dot\gamma_\lambda(s))$. We also set
\[X_\lambda(x,p):=
 \bigl(H_p(x,p),-H_x(x,p)-\lambda p\bigr),
\]
so that $\dot Z_0=X_H(Z_0)$ and
$\dot Z_\lambda=X_\lambda(Z_\lambda)$, where $X_0=X_H$ is the conservative Hamiltonian vector field.
All such lifts remain in a fixed compact set $K\subset T^*M$, and their
speeds are uniformly bounded for $0<\lambda\le1$.

\smallskip
\paragraph{{\it Upper estimate.}}
\smallskip

Fix $x\in M$ and let $\gamma_0:(-\infty,0]\to M$ be a backward
$u_0$-calibrated curve with $\gamma_0(0)=x$.  Then, for almost every $s<0$,
\[
  \frac{\dd}{\dd s}u_0(\gamma_0(s))
  =L(\gamma_0(s),\dot\gamma_0(s))+c.
\]
Using $\gamma_0$ as a competitor in the discounted representation formula
and integrating by parts on $[-R,0]$, we obtain
\[
\begin{aligned}
u_\lambda(x)
&\le
\int_{-\infty}^0 e^{\lambda s}
\frac{\dd}{\dd s}u_0(\gamma_0(s))\,\dd s,\\
\int_{-R}^0e^{\lambda s}
\frac{\dd}{\dd s}u_0(\gamma_0(s))\,\dd s
&=
u_0(x)-e^{-\lambda R}u_0(\gamma_0(-R))
-\lambda\int_{-R}^0e^{\lambda s}u_0(\gamma_0(s))\,\dd s.
\end{aligned}
\]
Since $u_0$ is bounded, $e^{-\lambda R}u_0(\gamma_0(-R))\to0$. We get
\begin{equation}\label{eq:upper}
u_\lambda(x)-u_0(x)
\le
-\lambda\int_{-\infty}^0e^{\lambda s}u_0(\gamma_0(s))\,\dd s.
\end{equation}

\smallskip
\paragraph{{\it Lower estimate.}}
\smallskip

Let $\gamma_\lambda:(-\infty,0]\to M$ be a backward
$u_\lambda$-calibrated curve with $\gamma_\lambda(0)=x$.  Then
\[
u_\lambda(x)
=
\int_{-\infty}^0
e^{\lambda s}\bigl(L(\gamma_\lambda(s),\dot\gamma_\lambda(s))+c\bigr)\,\dd s.
\]
Since $u_0$ is a critical subsolution,
\[
\frac{\dd}{\dd s}u_0(\gamma_\lambda(s))
\le
L(\gamma_\lambda(s),\dot\gamma_\lambda(s))+c
\quad\text{for a.e. }s.
\]
The same integration by parts gives
\begin{equation}\label{eq:lower}
u_\lambda(x)-u_0(x)
\ge
-\lambda\int_{-\infty}^0e^{\lambda s}
u_0(\gamma_\lambda(s))\,\dd s.
\end{equation}

\smallskip

For the lower bound, \eqref{eq:lower} reduces the problem to
\[
\int_{-\infty}^0 e^{\lambda s}u_0(\gamma_\lambda(s))\,\dd s\le C.
\]
This estimate will follow in Subsection~\ref{subsec:finite-theorem-proof}
from Lemma~\ref{lem:finite-distance}. The general upper bound is based on Lemma~\ref{lem:peierls-compression}.

\subsection{A strict critical subsolution}

By  
\cite{FathiSiconolfi}, there exists a critical subsolution $w\in C^1(M)$
which is strict outside the projected Aubry set, i.e.,
\begin{equation}\label{eq:strict}
H(x,Dw(x))\le c\quad\text{on }M,
\qquad
H(x,Dw(x))<c\quad\text{on }M\setminus\A.
\end{equation}
It is also known that the Aubry set has the graph property $\widetilde{\A}\subset\{(x,Dw(x)):x\in M\}$. Equivalently,
for $x\in\A$, $Dw(x)$ is the unique momentum above $x$ in the lifted
Aubry set.
For $(x,p)\in T^*M$, define
\begin{equation}\label{eq:q-def}
q(x,p)
=
L\bigl(x,H_p(x,p)\bigr)+c
-\bigl\langle Dw(x),H_p(x,p)\bigr\rangle.
\end{equation}

\begin{lemma}\label{lem:q}
The function $q$ is continuous, nonnegative, and $q^{-1}(0)=\widetilde{\A}$.
\end{lemma}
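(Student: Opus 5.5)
Write $v=H_p(x,p)$, so that $p=L_v(x,v)$ and, by Legendre duality, $L(x,v)=\langle p,v\rangle-H(x,p)$. Continuity of $q$ is immediate because $H\in C^2$, $L$ is continuous and $w\in C^1$. Substituting the Legendre identity gives
\[
q(x,p)=c-H(x,p)+\langle p-Dw(x),v\rangle .
\]
The Fenchel inequality $H(x,Dw(x))\ge \langle Dw(x),v\rangle-L(x,v)$, combined with \eqref{eq:strict}, yields
\[
q(x,p)=L(x,v)+c-\langle Dw(x),v\rangle\ \ge\ c-H(x,Dw(x))\ \ge 0 .
\]

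For the zero set, note that equality in the first inequality holds iff $Dw(x)=L_v(x,v)=p$, because $L(x,\cdot)$ is strictly convex. So $q(x,p)=0$ forces both $p=Dw(x)$ and $H(x,Dw(x))=c$. By the strictness in \eqref{eq:strict}, the second condition forces $x\in\A$. Hence $q^{-1}(0)\subset\{(x,Dw(x)):x\in\A\}$.

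Conversely, the graph property gives $\widetilde{\A}=\{(x,Dw(x)):x\in\A\}$, since $\widetilde\A$ projects onto $\A$ and is contained in the graph of $Dw$. For $x\in\A$ we have $H(x,Dw(x))=c$: indeed every critical subsolution satisfies $H(x,Du)=c$ on the Aubry set (a standard fact from \cite{FathiSiconolfi}). With $p=Dw(x)$ both inequalities above become equalities, so $q=0$ on $\widetilde\A$.

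The only delicate point is identifying $\widetilde\A$ precisely with this graph and knowing $H(x,Dw(x))=c$ on $\A$. I will take both from the cited weak KAM literature rather than prove them here.
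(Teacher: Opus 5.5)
Your proof is correct and is essentially the paper's argument. Both use the same splitting $q=\bigl[L(x,v)+H(x,Dw(x))-\langle Dw(x),v\rangle\bigr]+\bigl[c-H(x,Dw(x))\bigr]$, and both identify the zero set through the equality case of Fenchel's inequality, the strictness of $w$ off $\A$, the graph property, and $H(x,Dw(x))=c$ on $\A$. The one cosmetic difference is that you get $p=Dw(x)$ from $Dw(x)=L_v(x,v)$, whereas the paper uses injectivity of $p\mapsto H_p(x,p)$; your step actually rests on differentiability of $L(x,\cdot)$, not on its strict convexity as you state.
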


\begin{proof}
Adding and subtracting $H(x,Dw(x))$ gives
\[
\begin{aligned}
q(x,p)
&=
\Bigl[
L\bigl(x,H_p(x,p)\bigr)+H(x,Dw(x))
-\langle Dw(x),H_p(x,p)\rangle
\Bigr]\\
&\quad+
\bigl[c-H(x,Dw(x))\bigr].
\end{aligned}
\]
Both brackets are nonnegative: the first by Fenchel's inequality and the
second by \eqref{eq:strict}.  We get $q\ge0$.

Suppose
first that $q(x,p)=0$.  Since the two nonnegative terms above have sum zero,
both vanish.  The second one and the strictness in \eqref{eq:strict} imply
$x\in\A$.  Equality in Fenchel's inequality for the first one gives $H_p(x,p)=H_p(x,Dw(x))$.
For fixed $x$, strict convexity of $H(x,\cdot)$ implies that
$p\mapsto H_p(x,p)$ is injective. Hence, $p=Dw(x)$.  By the graph
property of the Aubry set, $(x,Dw(x))\in\widetilde{\A}$.  Thus, 
$q^{-1}(0)\subset\widetilde{\A}$.

Conversely, let $(x,p)\in\widetilde{\A}$.  Then $x\in\A$ and the
graph property gives $p=Dw(x)$.  Moreover
$H(x,Dw(x))=c$ on the Aubry set.  Therefore equality holds in Fenchel's
inequality at the pair $(Dw(x),H_p(x,Dw(x)))$, and both nonnegative terms in
the decomposition of $q$ vanish.  Therefore, $q(x,p)=0$, proving
$\widetilde{\A}\subset q^{-1}(0)$.

Finally, $q$ is continuous because $w\in C^1$ and $H$, $L$ are of class $C^2$.
\end{proof}

Along $Z_0$, calibration
and the chain rule give
\[
q(Z_0(s))
=\frac{\dd}{\dd s}(u_0-w)(\gamma_0(s))
\quad\text{for a.e. }s<0.
\]
Thus, for every $R>0$,
\[
\int_{-R}^0 q(Z_0(s))\,\dd s
=
(u_0-w)(x)-(u_0-w)(\gamma_0(-R)),
\]
and therefore
\begin{equation}\label{eq:q0}
\int_{-\infty}^0 q(Z_0(s))\,\dd s\le C.
\end{equation}

Along $Z_\lambda$,
\[
q(Z_\lambda(s))
=L(\gamma_\lambda(s),\dot\gamma_\lambda(s))+c
-\frac{\dd}{\dd s}w(\gamma_\lambda(s))
\quad\text{for a.e. }s<0.
\]
For $R>0$, since $w\in C^1(M)$ and
$\gamma_\lambda\in AC([-R,0];M)$, the composition
$w\circ\gamma_\lambda$ is absolutely continuous and
\[
\frac{\dd}{\dd s}w(\gamma_\lambda(s))
=\langle Dw(\gamma_\lambda(s)),\dot\gamma_\lambda(s)\rangle
\quad\text{for a.e. }s\in[-R,0].
\]
Integrating by parts we obtain
\[
\begin{aligned}
\int_{-R}^0 e^{\lambda s}q(Z_\lambda(s))\,\dd s
&=
\int_{-R}^0e^{\lambda s}
\bigl(L(\gamma_\lambda(s),\dot\gamma_\lambda(s))+c\bigr)\,\dd s\\
&\quad-w(x)+e^{-\lambda R}w(\gamma_\lambda(-R))+\lambda\int_{-R}^0e^{\lambda s}w(\gamma_\lambda(s))\,\dd s.
\end{aligned}
\]
Letting $R\to\infty$, the boundary term tends to zero because $w$ is
bounded, while the first integral converges to $u_\lambda(x)$ by the
discounted representation formula.  We obtain
\[
\int_{-\infty}^0 e^{\lambda s}q(Z_\lambda(s))\,\dd s
=u_\lambda(x)-w(x)
+\lambda\int_{-\infty}^0e^{\lambda s}w(\gamma_\lambda(s))\,\dd s.
\]
Since $u_\lambda$ and $w$ are uniformly bounded,
\begin{equation}\label{eq:qlambda}
\int_{-\infty}^0 e^{\lambda s}q(Z_\lambda(s))\,\dd s\le C.
\end{equation}

For later use, if $U$ is any fixed neighborhood of
$\widetilde{\A}$, then continuity of $q$, Lemma~\ref{lem:q}, and
compactness of $K$ give $q\ge\delta_U>0$ on $K\setminus U$.

\section{Local estimates near hyperbolic equilibria and periodic orbits}\label{sec:hyperbolic}

This section proves local estimates for orbit segments that remain near a fixed hyperbolic component of the Aubry set. Their integrated form is used in Lemma \ref{lem:finite-distance}, while the exponential decay property is used in Lemma \ref{lem:peierls-compression}.

The local
estimates below are stated for arbitrary real time intervals. When they are
applied to a calibrated curve, the interval is a subinterval
of $(-\infty,0]$.

\subsection{A hyperbolic equilibrium}

Let $z_0\in\widetilde{\A}$ be a hyperbolic equilibrium of $X_H$.

\begin{lemma}\label{lem:eq-local}
There exist $\rho_0,\mu,C,\lambda_0>0$ such that the following holds.  Let
$0<\rho\le\rho_0$, $0\le\lambda\le\lambda_0$, and let $a<b$ be real
numbers.  If $Z^\lambda:[a,b]\to T^*M$ is an orbit of $X_\lambda$ satisfying $Z^\lambda(s)\in B_{2\rho}(z_0)$
for every $s\in[a,b]$,
then for every $s\in[a,b]$,
\begin{equation}\label{eq:eq-point}
 d_{T^*M}(Z^\lambda(s),z_0)
 \le
 C\rho\bigl(e^{-\mu(s-a)}+e^{-\mu(b-s)}\bigr)+C\lambda.
\end{equation}
\end{lemma}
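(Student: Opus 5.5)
Work in local symplectic coordinates $z=(x,p)$ centred at $z_0$, so $z_0=0$ and $X_\lambda(z)=X_H(z)-\lambda(0,p)$. The linearization $A=DX_H(0)$ is hyperbolic. Since $X_H$ is Hamiltonian, its spectrum is symmetric under $\nu\mapsto-\nu$, so there are $n$ stable and $n$ unstable directions. Choose a linear splitting $\R^{2n}=E^s\oplus E^u$ and adapted norms, with projections $z=(\xi,\eta)$, such that for some $\mu_0>0$
\[
\langle A\xi,\xi\rangle_s\le-\mu_0|\xi|^2,\qquad \langle A\eta,\eta\rangle_u\ge\mu_0|\eta|^2.
\]
Write $X_\lambda(z)=Az+N(z)-\lambda P(z)$, where $|N(z)|\le C|z|^2$ because $H\in C^2$ (so $X_H\in C^1$; one may only get $|N(z)|=o(|z|)$, which is enough), and $P(z)=(0,p)$. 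The key observation is that $X_\lambda(0)=-\lambda P(0)=0$, because the momentum of $z_0$ is $0$ in these coordinates. Hence the perturbation has linear size $|\lambda P(z)|\le \lambda|z|$, not size $\lambda$. The extra $C\lambda$ term in \eqref{eq:eq-point} is then harmless, since it is only an upper-bound slack. To be safe, I would still allow a general $C^1$ perturbation with $|X_\lambda-X_H|\le C\lambda$, so the argument does not depend on the coordinate choice.

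Next I derive differential inequalities for $r_s(s)=|\xi(s)|$ and $r_u(s)=|\eta(s)|$ along $Z^\lambda$. Choose $\rho_0$ so small that $|N(z)|\le\frac{\mu_0}{4}|z|$ on $B_{2\rho_0}$, and $\lambda_0$ so small that the linear part of the $\lambda$-perturbation is also at most $\frac{\mu_0}{4}|z|$. Then a.e.
\[
\dot r_s\le-\mu_0 r_s+\tfrac{\mu_0}{2}(r_s+r_u)+C\lambda,\qquad
\dot r_u\ge\mu_0 r_u-\tfrac{\mu_0}{2}(r_s+r_u)-C\lambda .
\]
The cross terms have to be handled. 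I would do this with a cone argument, as in the standard proof of the Hartman--Grobman-type exponential dichotomy. Replace the constants by $\epsilon$ via smaller $\rho_0$ (using $|N(z)|\le\epsilon|z|$) and consider $V=r_u-r_s$. Where $r_u\ge r_s$, i.e.\ in the unstable cone, $\dot r_u\ge(\mu_0-2\epsilon)r_u-C\lambda$. Where $r_s\ge r_u$, $\dot r_s\le-(\mu_0-2\epsilon)r_s+C\lambda$. One also checks that the cone $\{r_u\ge r_s\}$ is forward invariant up to the $C\lambda$ error: on the boundary, $\dot V\ge 2(\mu_0-2\epsilon)r_u-C\lambda$, which is positive unless $r_u\lesssim\lambda$. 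Consequently there is a time $\tau\in[a,b]$ such that, outside the region $|z|\le C\lambda$, the orbit lies in the stable cone before $\tau$ and in the unstable cone after $\tau$.

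Then integrate with $\mu=\mu_0-2\epsilon$. On $[a,\tau]$, $r_s$ obeys $\dot r_s\le-\mu r_s+C\lambda$, so Grönwall gives
\[
r_s(s)\le e^{-\mu(s-a)}r_s(a)+C\lambda/\mu\le 2\rho e^{-\mu(s-a)}+C\lambda,
\]
and $r_u\le r_s$ there. On $[\tau,b]$, run time backward from $b$ using $\dot r_u\ge\mu r_u-C\lambda$. This gives $r_u(s)\le e^{-\mu(b-s)}r_u(b)+C\lambda\le 2\rho e^{-\mu(b-s)}+C\lambda$, and $r_s\le r_u$. Adding the two cases yields \eqref{eq:eq-point}.

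The main obstacle is the cone-switching step. The $C\lambda$ drift can break exact invariance of the cones near the origin, so the single switching time $\tau$ must be defined carefully, for instance as the first time $V\ge C'\lambda$. Before $\tau$ the orbit is then either in the stable cone or in $\{|z|\le C\lambda\}$, and both cases are consistent with the bound. If this gets messy, I would first try to bypass it by noting, as above, that $X_\lambda$ keeps $z_0$ as a hyperbolic fixed point for small $\lambda$. The standard dichotomy then applies directly, even without the $C\lambda$ term.
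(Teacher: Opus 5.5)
Your main argument is correct, but it follows a different route from the paper.

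\textbf{The paper's route.} It never differentiates norms. It writes $\dot y=Ay+r(s)$ with $|r|\le\varepsilon|y|+C\lambda$. It represents $P^sy(s)$ by variation of constants from the left endpoint $a$, and $P^uy(s)$ from the right endpoint $b$, using the dichotomy bounds for $e^{A\tau}P^{s,u}$ with rate $2\mu$. It then closes with a bootstrap on the weighted quantity $N=\sup_s|y(s)|/(\rho h(s)+\lambda)$, where $h(s)=e^{-\mu(s-a)}+e^{-\mu(b-s)}$. This gives $N\le C_1+C_1\varepsilon N$, hence $N\le 2C_1$ once $C_1\varepsilon\le\frac12$.

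\textbf{Your route.} You use adapted norms and a cone argument. Your two inequalities actually give, with your $V=r_u-r_s$, the bound $\dot V\ge(\mu_0-2\epsilon)(r_s+r_u)-2C\lambda$ almost everywhere, not only on the cone boundary. So $\{V\ge C'\lambda\}$ is forward invariant for $C'$ large. Taking $\tau$ as the first time $V\ge C'\lambda$, you get scalar Gr\"onwall inequalities for $r_s$ forward from $a$ on $[a,\tau]$, and for $r_u$ backward from $b$ on $[\tau,b]$.

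One correction: before $\tau$ the orbit need not be ``in the stable cone or in $\{|z|\le C\lambda\}$''. What you actually know is $r_u<r_s+C'\lambda$, with $r_s$ possibly large. That is exactly what turns the cross term $\epsilon r_u$ into $\epsilon r_s+\epsilon C'\lambda$ and closes $\dot r_s\le-\mu r_s+C\lambda$.

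\textbf{Comparison.} Your route gives a transparent geometric picture with a single switching time and needs no a priori finiteness of a weighted norm. The cost is working with Lipschitz, only a.e.\ differentiable, norms and continuity arguments at the threshold. The paper's integral-equation route carries over verbatim to Lemma~\ref{lem:per-local}, where $A(\theta)$ depends on the phase and only the dichotomy bounds for $U(\theta,\sigma)$ are available. Your version would need a phase-dependent adapted norm there.

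\textbf{Your ``key observation'' is false and should be dropped.} The discount term $-\lambda p\,\partial_p$ is $-\lambda$ times the Liouville field, which vanishes only on the zero section. A symplectic recentring does not turn it into $-\lambda(p-p_0)\,\partial_p$. In the paper's examples $z_0=(0,-a)$ with $a\ne0$, so $X_\lambda(z_0)=(0,\lambda a)\neq0$. The zero of $X_\lambda$ moves to a point $z_\lambda$ with $d_{T^*M}(z_\lambda,z_0)\asymp\lambda$; for $r=1$ in Section~\ref{sec:one-well}, $z_\lambda=(-a\lambda/2,-a)$.

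The $C\lambda$ term in \eqref{eq:eq-point} is therefore necessary, not slack. The stationary orbit $Z^\lambda\equiv z_\lambda$ on a long interval violates the estimate without it. In particular your fallback (``$z_0$ remains a hyperbolic fixed point of $X_\lambda$, so the dichotomy applies without the $C\lambda$ term'') fails as stated. The correct fallback is to centre at the hyperbolic zero $z_\lambda$ given by the implicit function theorem, apply the dichotomy with $\lambda$-uniform constants, and add $d_{T^*M}(z_\lambda,z_0)\le C\lambda$. Your main line already allows a general perturbation with $|X_\lambda-X_H|\le C\lambda$, so it is unaffected.
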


\begin{proof}
Choose local coordinates centered at $z_0$ and write $y(s)$ for the
coordinate representation of $Z^\lambda(s)$.  Since $z_0$ is an equilibrium of the
conservative flow, $A:=DX_H(z_0)$ is hyperbolic.  Let $\R^{2n}=E^s\oplus E^u$ be the decomposition into the invariant stable and unstable subspaces of
$A$, and let $P^s,P^u$ be the corresponding projections.  The exponential
decay rates on $E^s$ in forward time and on $E^u$ in backward time need not
coincide.  Choose $\mu>0$ so that $2\mu$ is smaller than both rates.  Then
there is $C_0\ge1$ such that
\[
 \|e^{A\tau}P^s\|\le C_0e^{-2\mu\tau}
 \quad\text{for every }\tau\ge0,
 \qquad
 \|e^{A\tau}P^u\|\le C_0e^{2\mu\tau}
 \quad\text{for every }\tau\le0.
\]

We next estimate the nonlinear and discounted perturbations.  On a fixed
small coordinate neighborhood of $z_0$, the momentum variable is bounded,
and
\[
 X_\lambda(x,p)-X_H(x,p)=(0,-\lambda p).
\]
Hence $|X_\lambda-X_H|\le C\lambda$ there.  Moreover, since $X_H$ is
differentiable at $z_0$, for every $\varepsilon>0$ there is a sufficiently
small neighborhood of $z_0$ such that
\[
 |X_H(z_0+y)-Ay|\le\varepsilon|y|.
\]
Consequently, once the size of the neighborhood is chosen sufficiently
small, the orbit equation can be written as
\begin{equation}\label{eq:eq-remainder}
 \dot y=Ay+r(s),
 \qquad
 |r(s)|\le\varepsilon |y(s)|+C\lambda.
\end{equation}
Here $\varepsilon>0$ will be fixed at the end of the argument. The constant
$C$ is independent of $a,b,\rho$ and $\lambda$.

We estimate the stable component from the left endpoint and the unstable
component from the right endpoint.  For the equation
$\dot y=Ay+r(s)$, the usual solution formula gives
\[
 P^sy(s)
 =e^{A(s-a)}P^sy(a)
 +\int_a^s e^{A(s-\tau)}P^sr(\tau)\,\dd\tau,
\]
and, writing the same formula backwards from $b$,
\[
 P^uy(s)
 =e^{A(s-b)}P^uy(b)
 -\int_s^b e^{A(s-\tau)}P^ur(\tau)\,\dd\tau.
\]
Since the whole orbit segment lies in $B_{2\rho}(z_0)$, the endpoint values
satisfy $|y(a)|+|y(b)|\le C\rho$.  Therefore,
\begin{equation}\label{eq:eq-stable-unstable-bounds}
\begin{aligned}
 |P^sy(s)|
 &\le C\rho e^{-2\mu(s-a)}
 +C\int_a^s e^{-2\mu(s-\tau)}
       \bigl(\varepsilon|y(\tau)|+\lambda\bigr)\,\dd\tau,\\
 |P^uy(s)|
 &\le C\rho e^{-2\mu(b-s)}
 +C\int_s^b e^{-2\mu(\tau-s)}
       \bigl(\varepsilon|y(\tau)|+\lambda\bigr)\,\dd\tau.
\end{aligned}
\end{equation}

Set
\[
 h(s):=e^{-\mu(s-a)}+e^{-\mu(b-s)},
 \qquad
 N:=\sup_{a\le s\le b}
 \frac{|y(s)|}{\rho h(s)+\lambda}.
\]
Then
\[
 |y(\tau)|\le N\bigl(\rho h(\tau)+\lambda\bigr).
\]
We give the details for the stable component. The unstable estimate is
analogous.  Substituting the preceding bound into the first inequality in
\eqref{eq:eq-stable-unstable-bounds} gives
\[
\begin{aligned}
 |P^sy(s)|
 \le C\rho e^{-2\mu(s-a)}
&+C\varepsilon N\rho
 \int_a^s e^{-2\mu(s-\tau)}e^{-\mu(\tau-a)}\,\dd\tau\\
 &+C\varepsilon N\rho
 \int_a^s e^{-2\mu(s-\tau)}e^{-\mu(b-\tau)}\,\dd\tau\\
 &+C(\varepsilon N+1)\lambda
 \int_a^s e^{-2\mu(s-\tau)}\,\dd\tau.
\end{aligned}
\]
The three integrals satisfy
\[
 \int_a^s e^{-2\mu(s-\tau)}e^{-\mu(\tau-a)}\,\dd\tau
 \le \frac1\mu e^{-\mu(s-a)},
\]
\[
 \int_a^s e^{-2\mu(s-\tau)}e^{-\mu(b-\tau)}\,\dd\tau
 \le \frac1{3\mu}e^{-\mu(b-s)},
\]
and
\[
 \int_a^s e^{-2\mu(s-\tau)}\,\dd\tau\le\frac1{2\mu}.
\]
Since $e^{-2\mu(s-a)}\le e^{-\mu(s-a)}\le h(s)$, it follows that
\[
 |P^sy(s)|
 \le C\bigl(\rho h(s)+\lambda\bigr)
 +C\varepsilon N\bigl(\rho h(s)+\lambda\bigr).
\]
The same calculation, using the second inequality in
\eqref{eq:eq-stable-unstable-bounds} and the right endpoint $b$, gives
\[
 |P^uy(s)|
 \le C\bigl(\rho h(s)+\lambda\bigr)
 +C\varepsilon N\bigl(\rho h(s)+\lambda\bigr).
\]

Adding these two estimates, there is
a constant $C_1$ independent of
$\varepsilon,a,b,\rho,\lambda$ such that
\[
 |y(s)|
 \le
 C_1\bigl(\rho h(s)+\lambda\bigr)
 +C_1\varepsilon N\bigl(\rho h(s)+\lambda\bigr).
\]
Dividing by
$\rho h(s)+\lambda$ and taking the supremum gives
\[
 N\le C_1+C_1\varepsilon N.
\]
We now choose $\varepsilon>0$ so small that $C_1\varepsilon\le1/2$, and
then choose $\rho_0>0$ sufficiently small so that
\eqref{eq:eq-remainder} holds with this value of $\varepsilon$ whenever
$|y|\le2\rho_0$.  After fixing such a neighborhood, choose
$\lambda_0>0$ sufficiently small.  Then
\[
 N\le C_1+\frac12N
 \qquad\Longrightarrow\qquad
 N\le2C_1.
\]
This proves \eqref{eq:eq-point}.
\end{proof}

Two consequences will be used below. For $\lambda=0$,
\[
  \int_a^b d_{T^*M}(Z^0(s),z_0)\,\dd s\le C\rho.
\]
If $Z^\lambda$ is defined on $(-\infty,b]$ and remains in
$B_{2\rho}(z_0)$, letting $a\to-\infty$ gives
\[
  d_{T^*M}(Z^\lambda(s),z_0)
  \le C\rho e^{-\mu(b-s)}+C\lambda
  \qquad \text{for every }s\le b.
\]

\subsection{A hyperbolic periodic orbit}

Let $\Gamma\subset\widetilde{\A}$ be a periodic orbit of the conservative
Hamiltonian flow with prime period $T$.  We assume that it is hyperbolic in
the critical energy level $H^{-1}(c)$: after removing the flow direction,
the linear flow is hyperbolic in the transverse coordinate $y$ defined below.

Besides the flow direction, there is an energy direction which is not
controlled by this hyperbolicity assumption.  For a $u_\lambda$-calibrated
curve $\gamma_\lambda$, the Hamilton--Jacobi equation controls that direction directly.  Let
$Z_\lambda(s)=(\gamma_\lambda(s),p_\lambda(s))$.  At almost every interior time,
$p_\lambda(s)=Du_\lambda(\gamma_\lambda(s))$, and therefore
\[
 H(Z_\lambda(s))-c=-\lambda u_\lambda(\gamma_\lambda(s)).
\]
Since $u_\lambda$ is uniformly bounded,
we obtain for every time on the curve
\begin{equation}\label{eq:energy}
 |H(Z_\lambda(s))-c|\le C\lambda.
\end{equation}

The next lemma contains the two finite-interval estimates needed later.  The
first is used for $u_\lambda$-calibrated curves in
Lemma~\ref{lem:finite-distance}.  In Lemma~\ref{lem:peierls-compression},
the lifts of the curves $\gamma_n$ minimizing $h_{T_n}(a,x)$ are conservative orbits with
energy $c+e_n$, where $e_n\to0$. Part (ii) is used for these orbit segments.

\begin{lemma}\label{lem:per-local}
There exist $\rho_0,\mu,C,\lambda_0,e_0>0$ such that the following holds for
$0<\rho\le\rho_0$ and every real interval $[a,b]$.

\begin{enumerate}[label=\textnormal{(\roman*)}]
\item If $0<\lambda\le\lambda_0$ and
$Z_\lambda:[a,b]\to T^*M$ is the lift of a $u_\lambda$-calibrated curve with
\[
 d_{T^*M}(Z_\lambda(s),\Gamma)<2\rho
 \qquad\text{for every }a\le s\le b,
\]
then
\begin{equation}\label{eq:per-point}
 d_{T^*M}(Z_\lambda(s),\Gamma)
 \le C\rho\bigl(e^{-\mu(s-a)}+e^{-\mu(b-s)}\bigr)+C\lambda
 \qquad\text{for every }a\le s\le b.
\end{equation}

\item If $Z^0:[a,b]\to T^*M$ is a conservative orbit satisfying
\[
 \dot Z^0=X_H(Z^0),
 \qquad
 H(Z^0(s))\equiv c+e,
 \qquad |e|\le e_0,
\]
and
\[
 d_{T^*M}(Z^0(s),\Gamma)<2\rho
 \qquad\text{for every }a\le s\le b,
\]
then
\begin{equation}\label{eq:per-energy-point}
 d_{T^*M}(Z^0(s),\Gamma)
 \le C\rho\bigl(e^{-\mu(s-a)}+e^{-\mu(b-s)}\bigr)+C|e|
 \quad\text{for every }a\le s\le b.
\end{equation}
\end{enumerate}
\end{lemma}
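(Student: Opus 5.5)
We reduce both parts to the equilibrium argument of Lemma~\ref{lem:eq-local}, applied in coordinates adapted to $\Gamma$. In a tubular neighborhood of $\Gamma$ we introduce coordinates $(\theta,E,y)$. Here $\theta\in\R/T\mathbb Z$ is a phase along $\Gamma$, $E=H-c$ is the energy, and $y\in\R^{2n-2}$ is a transverse coordinate lying in a local section $\Sigma_\theta$. We build these by taking, for each $\theta$, a smooth family of $(2n-2)$-dimensional subspaces transverse to both $X_H$ and $\nabla H$ along $\Gamma$. Then $d_{T^*M}(Z,\Gamma)\asymp |E|+|y|$ near $\Gamma$.

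In these coordinates the conservative flow reads
\[
\dot\theta=1+O(|y|+|E|),\qquad \dot E=0,\qquad \dot y=A(\theta)y+O(|y|^2+|E|),
\]
where $A(\theta)$ is $T$-periodic. Hyperbolicity in $H^{-1}(c)$ means that the linear system $\dot y=A(\theta)y$ with $\dot\theta=1$ admits an exponential dichotomy: there are periodic projections $P^s(\theta),P^u(\theta)$ and a transition operator $\Phi(\theta_1,\theta_0)$ satisfying
\[
\|\Phi(\theta_0+\tau,\theta_0)P^s(\theta_0)\|\le C_0e^{-2\mu\tau}\quad(\tau\ge0),
\]
together with the reverse-time analogue on $P^u$. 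This follows from Floquet theory, since the monodromy restricted to the transverse space has no eigenvalues on the unit circle.

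Next we reparametrize time by $\theta$. Because $\dot\theta\in[1/2,2]$ near $\Gamma$, time intervals and $\theta$-intervals are comparable, so exponential factors in $\theta$ convert to exponential factors in $s$ with $\mu$ changed by a factor of $2$. After this step $y$ satisfies
\[
\frac{\dd y}{\dd\theta}=A(\theta)y+r,\qquad |r|\le\varepsilon|y|+C\eta,
\]
where the error term is $\eta=|e|$ in case (ii). In case (i) we take $\eta=\lambda$. This uses \eqref{eq:energy}, which gives $|E|\le C\lambda$, and the fact that $X_\lambda-X_H=(0,-\lambda p)$ contributes $O(\lambda)$ to the vector field, and hence to $r$ and to $\dot\theta-1$.

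We then repeat the proof of Lemma~\ref{lem:eq-local} verbatim, with $e^{A\tau}P^{s,u}$ replaced by the dichotomy operators. We write $P^sy$ via variation of constants forward from $a$ and $P^uy$ backward from $b$. Setting $N=\sup|y|/(\rho h+\eta)$ with $h(s)=e^{-\mu(s-a)}+e^{-\mu(b-s)}$, the same three integral estimates give $N\le C_1+C_1\varepsilon N$. Choosing $\varepsilon$ small, then $\rho_0$ small, and then $\lambda_0$ and $e_0$ small yields $|y(s)|\le C(\rho h(s)+\eta)$. Adding the bound $|E|\le C\eta$ gives \eqref{eq:per-point} and \eqref{eq:per-energy-point}.

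The main obstacle is the coordinate step. We must construct the transverse section so that $y$ and $E$ together measure distance to $\Gamma$. The linearization must have no neutral direction left other than the flow and energy directions; those two are exactly the directions the hypothesis does not control. We must also check that the variation-of-constants formula with a nonautonomous dichotomy still produces the uniform constants needed. A subtle point is that the energy direction couples into $\dot y$ only through an $O(|E|)$ term, which is why $|E|\le C\eta$ suffices. For $X_\lambda$, energy is not conserved, so we rely on \eqref{eq:energy} from calibration rather than on the flow; this is why part (i) is restricted to lifts of calibrated curves.
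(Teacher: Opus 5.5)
Your proposal is correct and follows essentially the paper's route: tubular coordinates $(\theta,E,y)$ with $E=H-c$, energy controlled by conservation in (ii) and by \eqref{eq:energy} in (i), reparametrization by the phase $\theta$, and then the sup-ratio bound $N\le C_1+C_1\varepsilon N$ from Lemma~\ref{lem:eq-local} with the periodic dichotomy operators in place of $e^{A\tau}P^{s,u}$. One small correction: $H\in C^2$ only gives $X_H\in C^1$, so the remainder in $\dot y$ is $o(|y|)+O(|E|)$ rather than $O(|y|^2+|E|)$; this is harmless, since you only use the bound $\varepsilon|y|+C\eta$.
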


\begin{proof}
Parametrize $\Gamma$ by the conservative flow $X_H$.  Since $\Gamma$ is a
nonstationary periodic orbit, $X_H(z)\neq0$ for every $z\in\Gamma$.  By
nondegeneracy of the symplectic form, $dH(z)=0$ if and only if
$X_H(z)=0$. It follows that $dH\neq0$ on $\Gamma$.  By the implicit function theorem,
after shrinking a neighborhood $\mathcal V$ of $\Gamma$, $\Sigma:=H^{-1}(c)\cap\mathcal V$ is a $C^2$ hypersurface.  Since $X_H\in C^1$, the periodic parametrization of
$\Gamma$ is $C^2$.  The hypersurface $\Sigma$ is cooriented by $dH$, while
the flow orients $\Gamma$. Hence, the normal bundle of $\Gamma$ in $\Sigma$
is orientable and therefore trivial over the circle.  Fix a $C^1$
trivialization of this normal bundle.  Using the flow phase $\theta$,
coordinates $y$ transverse to the orbit within $\Sigma$, and $E=H-c$ as the
remaining coordinate, a standard tubular construction gives a sufficiently
small relatively compact neighborhood with adapted coordinates
\[
 (\theta,E,y)\in(\R/T\mathbb Z)\times\R\times\R^{2n-2}.
\]
In these coordinates $\Gamma=\{E=0,\ y=0\}$, and the phase is chosen so that
\[
 \frac{\dd\theta}{\dd s}=1
 \qquad\text{on }\Gamma.
\]
In what follows, a dot denotes differentiation with respect to the original
time variable $s$ until we explicitly change the independent variable from
$s$ to $\theta$.

We first write the conservative vector field in the tubular coordinates.  Let
$G(\theta,E,y)$ and $F(\theta,E,y)$ denote respectively its $\theta$- and
$y$-components.  Along the periodic orbit,
\[
 G(\theta,0,0)=1,
 \qquad
 F(\theta,0,0)=0.
\]
Set $A(\theta):=D_yF(\theta,0,0)$. Because $\dot\theta=1$ on $\Gamma$, the variational equation in the
$y$-directions inside the critical energy surface $E=0$ is
\begin{equation}\label{eq:per-linear}
 \frac{\dd y}{\dd\theta}=A(\theta)y.
\end{equation}
For $\sigma,\theta\in\R$ and $v\in\R^{2n-2}$, let
$U(\theta,\sigma)v$ denote the value at phase $\theta$ of the solution of
\eqref{eq:per-linear} with initial value $y(\sigma)=v$.  On the critical
energy surface $E=0$, the variable $y$ describes precisely the directions
transverse to the flow.  Since $\Gamma$ is hyperbolic in $H^{-1}(c)$, these
$y$-directions split into invariant stable and unstable subspaces.  Let
$P^s(\theta)$ and $P^u(\theta)$ denote the corresponding projections in the
$y$-coordinates.  After choosing $\mu>0$ smaller than the stable and unstable
exponential rates, there is $C_0\ge1$ such that
\[
 \|U(\theta,\sigma)P^s(\sigma)\|
 \le C_0e^{-2\mu(\theta-\sigma)}
 \qquad\text{for every }\theta\ge\sigma,
\]
and
\[
 \|U(\theta,\sigma)P^u(\sigma)\|
 \le C_0e^{-2\mu(\sigma-\theta)}
 \qquad\text{for every }\theta\le\sigma.
\]

Since $H\in C^2$, the Hamiltonian vector field $X_H$ is $C^1$.  With the
adapted coordinates above, $G$ and $F$ are $C^1$ in $(E,y)$, and
$\partial_EG$, $D_yG$, $\partial_EF$, and $D_yF$ are continuous in
$(\theta,E,y)$.  Hence Taylor expansion in $(E,y)$ at $(0,0)$ is uniform with
respect to the phase:
\begin{equation}\label{eq:per-transverse-expansion}
\begin{aligned}
 G(\theta,E,y)
 &=1+\partial_EG(\theta,0,0)E
   +D_yG(\theta,0,0)y+R_\theta(\theta,E,y),\\
 F(\theta,E,y)
 &=A(\theta)y+\partial_EF(\theta,0,0)E
   +R_y(\theta,E,y),
\end{aligned}
\end{equation}
where there is a modulus $\omega(r)\to0$ as $r\to0$ such that
\begin{equation}\label{eq:per-remainders}
 |R_\theta(\theta,E,y)|+|R_y(\theta,E,y)|
 \le \omega(r)(|E|+|y|)
\end{equation}
whenever $|E|+|y|\le r$.  The uniformity in $\theta$ follows from the
continuity of the displayed derivatives and compactness of the phase circle.

We treat $Z_\lambda$ and $Z^0$ in parallel.  Put
\[
 \eta=
 \begin{cases}
   \lambda,&\text{for }Z_\lambda,\\
   |e|,&\text{for }Z^0\text{ with }H(Z^0)=c+e.
 \end{cases}
\]
For $Z_\lambda$, \eqref{eq:energy} implies $|E(s)|\le C\eta$ for every
$s\in[a,b]$.  For $Z^0$, we have $E(s)\equiv e$, so the same bound
holds.

Since $\Gamma$ is compact, after fixing the tubular neighborhood the momentum
variable $p$ is uniformly bounded there.  We have
\[
 X_\lambda-X_H=(0,-\lambda p),
 \qquad
 |X_\lambda-X_H|\le C\lambda,
\]
and the perturbation remains $O(\lambda)$ after passing to the fixed tubular
coordinates.  If the orbit stays in the $2\rho$-tube, then
$|E(s)|+|y(s)|\le C\rho$.  Using also $|E(s)|\le C\eta$, the expansions
\eqref{eq:per-transverse-expansion}--\eqref{eq:per-remainders} give, after
enlarging the modulus if necessary,
\[
 |\dot\theta(s)-1|\le C\bigl(|y(s)|+\eta\bigr),
 \qquad
 |\dot y(s)-A(\theta(s))y(s)|
 \le \omega_1(\rho)|y(s)|+C\eta,
\]
where $\omega_1(\rho)\to0$ as $\rho\to0$.

Choose first $\rho_0>0$ and then $\lambda_0,e_0>0$ sufficiently small so
that
\begin{equation}\label{eq:per-theta-monotone}
 \frac12\le\dot\theta(s)\le\frac32
 \qquad\text{for every }s\in[a,b].
\end{equation}
This shows that $\theta$ is strictly increasing and may be used as the independent
variable.  Using the two estimates above and the boundedness of $A(\theta)$,
we obtain
\[
\begin{aligned}
 \left|\frac{\dd y}{\dd\theta}-A(\theta)y\right|
 &\le
 \frac{|\dot y-A(\theta)y|}{|\dot\theta|}
 +\left|\frac1{\dot\theta}-1\right|\,|A(\theta)y|\\
 &\le 2\omega_1(\rho)|y|+C\eta+C|1-\dot\theta|\,|y|\\
 &\le \bigl(2\omega_1(\rho)+C\rho\bigr)|y|+C\eta,
\end{aligned}
\]
where the last inequality uses $|1-\dot\theta|\le C(|y|+\eta)$ and
$|y|\le C\rho$.  Consequently, for every prescribed $\varepsilon>0$, by
decreasing $\rho_0$ if necessary we obtain
\begin{equation}\label{eq:per-error}
 \left|\frac{\dd y}{\dd\theta}-A(\theta)y\right|
 \le\varepsilon|y|+C\eta.
\end{equation}
The constant $C$ is independent of the prescribed $\varepsilon$.

Choose a continuous lift of the phase on $[a,b]$ and set $\alpha:=\theta(a)$, $\beta:=\theta(b)$.
Since the endpoints lie in the $2\rho$-tube,
$|y(\alpha)|+|y(\beta)|\le C\rho$.  Write
\[
 \frac{\dd y}{\dd\theta}=A(\theta)y+r(\theta),
 \qquad
 |r(\theta)|\le\varepsilon|y(\theta)|+C\eta.
\]
Set
\[
 h_\theta(\theta)
 :=e^{-\mu(\theta-\alpha)}+e^{-\mu(\beta-\theta)},
 \qquad
 N:=\sup_{\alpha\le\theta\le\beta}
 \frac{|y(\theta)|}{\rho h_\theta(\theta)+\eta}.
\]
The denominator is strictly positive on the compact interval
$[\alpha,\beta]$, so $N<\infty$.  Applying the same estimates as in the proof of Lemma \ref{lem:eq-local}, there is a constant $C_1$, independent of
$\varepsilon,a,b,\rho,\lambda$ and $e$, such that
\[
 |y(\theta)|
 \le
 C_1\bigl(\rho h_\theta(\theta)+\eta\bigr)
 +C_1\varepsilon N\bigl(\rho h_\theta(\theta)+\eta\bigr).
\]
Dividing by $\rho h_\theta(\theta)+\eta$ and taking the supremum gives
\[
 N\le C_1+C_1\varepsilon N.
\]
Since $C_1$ is independent of $\varepsilon$, we now choose
$\varepsilon>0$ so small that $C_1\varepsilon\le1/2$, and then fix
$\rho_0,\lambda_0,$ and $e_0$ small enough that \eqref{eq:per-error} holds
with this choice of $\varepsilon$.  Then
\[
 N\le C_1+\frac12N
 \qquad\Longrightarrow \qquad
 N\le2C_1.
\]
Consequently,
\begin{equation}\label{eq:per-phase-estimate}
 |y(\theta)|
 \le
 C\rho\bigl(e^{-\mu(\theta-\alpha)}
             +e^{-\mu(\beta-\theta)}\bigr)+C\eta.
\end{equation}
Together with $|E|\le C\eta$ and equivalence of the tubular coordinates
with the Riemannian distance, this controls the distance to $\Gamma$ in the
phase variable.

We finally return from phase to the original time variable.  By
\eqref{eq:per-theta-monotone}, for every
$s\in[a,b]$,
\[
 \theta(s)-\alpha
 =\int_a^s\dot\theta(\tau)\,\dd\tau
 \ge\frac12(s-a),
\]
and
\[
 \beta-\theta(s)
 =\int_s^b\dot\theta(\tau)\,\dd\tau
 \ge\frac12(b-s).
\]
By \eqref{eq:per-phase-estimate}, after decreasing $\mu$ by a fixed
factor and renaming it, we get \eqref{eq:per-point}
and \eqref{eq:per-energy-point}.
\end{proof}

When $e=0$, integration of the conservative estimate gives
\[
\int_a^b d_{T^*M}(Z^0(s),\Gamma)\,\dd s \le C\rho.
\]
If $Z_\lambda$ is defined on $(-\infty,b]$ and remains
in the $2\rho$-neighborhood of $\Gamma$, then letting the left endpoint tend to
$-\infty$ in \eqref{eq:per-point} gives
\[
d_{T^*M}(Z_\lambda(s),\Gamma)
\le C\rho e^{-\mu(b-s)}+C\lambda
\qquad \text{for every }s\le b.
\]
For an orbit $Z^0$ of energy $c$, the same argument applied to
\eqref{eq:per-energy-point} gives
\[
d_{T^*M}(Z^0(s),\Gamma)
\le C\rho e^{-\mu(b-s)}
\qquad \text{for every }s\le b.
\]

\section{Proof of the main theorem}\label{sec:finite-proof}

Throughout this section,
we assume (A).
We write $\A_i:=\pi_M(\Gamma_i)$.
The projected Mather measure associated with $\Gamma_i$ is denoted by
$\mu_i$, and $m_i:=\int_Mu_0\,\dd\mu_i\le0$.

\subsection{Active components and the selected solution}

For $t>0$, let
\[
 h_t(y,x)
 :=
 \inf_{\substack{\gamma(0)=y\\ \gamma(t)=x}}
 \int_0^t\bigl(L(\gamma(s),\dot\gamma(s))+c\bigr)\,\dd s
\]
and let
\[
 h(y,x):=\liminf_{t\to\infty}h_t(y,x)
\]
be the Peierls barrier.  The following lemma identifies, for each $x\in M$,
an active component $\Gamma_i$ from which the Peierls barrier represents $u_0(x)$.

\begin{lemma}\label{lem:active-representation}
Recall that $\A_i=\pi_M(\Gamma_i)$, and let $I_*:=\{i\in\{1,\dots,N\}:m_i=0\}$.
Then $I_*\neq\emptyset$.  Moreover, for every $x\in M$ there exists
$i\in I_*$ such that, for every $a\in \A_i$,
\begin{equation}\label{eq:active-representation}
 u_0(x)=u_0(a)+h(a,x).
\end{equation}
\end{lemma}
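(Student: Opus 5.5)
We will rely on the characterization of the selected limit from \cite{DFIZ}: $u_0=\sup\mathcal S_-$, where $\mathcal S_-$ is the set of critical subsolutions $v$ with $\int_M v\,\dd\mu\le0$ for every projected Mather measure $\mu$. Equivalently,
\[
 u_0(x)=\min_{\mu}\int_M h(y,x)\,\dd\mu(y),
\]
the minimum being over projected Mather measures. We also use the standard weak KAM fact that $u_0(x)-u_0(a)\le h(a,x)$ for all $a\in M$. Under (A), the Mather measures are the convex combinations $\sum_i t_i\mu_i$ with $t_i\ge0$ and $\sum_i t_i=1$, because each $\Gamma_i$ carries a unique Mather measure and the lifted Mather set is contained in $\widetilde{\A}$. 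The right-hand side of the formula is linear in the measure, so its minimum is attained at an extreme point. Hence for each $x$ there is an index $i$ with $u_0(x)=\int h(y,x)\,\dd\mu_i(y)$.

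\textbf{Step 1: reduce to a single barrier value.} For $a,a'\in\A_i$ in the same static class we have $h(a,a')+h(a',a)=0$. Combined with the triangle inequality for $h$, this gives $h(a,x)=h(a,a')+h(a',x)$. Since $u_0$ is a solution, $u_0(a')-u_0(a)=h(a,a')$ on a static class. Therefore $u_0(a)+h(a,x)$ is independent of $a\in\A_i$; call it $\phi_i(x)$. Integrating against $\mu_i$, which is supported in $\A_i$, gives
\[
 \int h(y,x)\,\dd\mu_i(y)=\phi_i(x)-m_i .
\]

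\textbf{Step 2: pin down $\phi_i$ and the minimum.} By the inequality $u_0(x)-u_0(a)\le h(a,x)$, we have $u_0\le\phi_i$ for every $i$. The function $\min_i\phi_i$ is a critical subsolution, being a minimum of finitely many solutions. Its $\mu_j$-integral is at most $\int\phi_j\,\dd\mu_j$. On $\A_j$ we have $\phi_j=u_0$, since $h(a,a)=0$ there, so this integral equals $m_j\le0$. Thus $\min_i\phi_i\in\mathcal S_-$, and so $u_0\ge\min_i\phi_i$. Together with $u_0\le\phi_i$ this yields
\[
 u_0=\min_i\phi_i .
\]

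\textbf{Step 3: extract an active index.} Combining Step 1 with the minimum formula gives
\[
 u_0(x)=\min_i\bigl(\phi_i(x)-m_i\bigr).
\]
Each term satisfies $\phi_i(x)-m_i\ge\phi_i(x)\ge u_0(x)$. Let $i$ attain the minimum. Then equality holds throughout, so $m_i=0$ and $\phi_i(x)=u_0(x)$. This is exactly \eqref{eq:active-representation} with $i\in I_*$. Applying this at any single $x$ already shows $I_*\neq\emptyset$.

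\textbf{Main obstacle.} The delicate point is the input from \cite{DFIZ}: the exact form of the minimum formula, and in particular that the minimum is over all Mather measures, so that it reduces to the extreme points $\mu_i$. A further concern is whether Step 1 needs $\A_i$ to be a single static class; this is guaranteed by (A).
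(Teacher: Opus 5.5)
Your proof is correct and is essentially the paper's argument. Both reduce the DFIZ minimum formula to the extreme measures $\mu_i$ and use the static-class identities to write $\int_M h(y,x)\,\dd\mu_i(y)=u_0(a)+h(a,x)-m_i$; both then combine domination with $m_i\le0$ to force $m_i=0$ at a minimizing index, the only difference being that the paper does this last step before the static-class computation. Your Step 2 is harmless but unnecessary, since Step 3 uses only the domination inequality $u_0\le\phi_i$ and never the identity $u_0=\min_i\phi_i$.
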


\begin{proof}
Every Mather measure is supported on
$\widetilde{\A}=\Gamma_1\sqcup\cdots\sqcup\Gamma_N$.  Since the $\Gamma_i$ are
pairwise disjoint invariant sets and each equilibrium or periodic orbit
carries a unique invariant probability, every projected Mather measure is a
convex combination of the measures $\mu_i$.

By Proposition~1.3 of \cite{DFIZ}, the selected solution satisfies
$m_i\le0$ for every $i$ and is represented by minimizing $\mu\to\int_M h(y,x)\,\dd\mu(y)$
over all projected Mather measures.  In the present finite setting every such
measure is a convex combination of $\mu_1,\dots,\mu_N$, and the displayed
functional is affine in $\mu$.  Its minimum over this convex hull is therefore
attained at one of the measures $\mu_i$.  Thus,
\begin{equation}\label{eq:DFIZ-finite}
 u_0(x)=\min_{1\le i\le N}\int_M h(y,x)\,\dd\mu_i(y).
\end{equation}

For every $i$, domination of $u_0$ gives
\[
 h(y,x)\ge u_0(x)-u_0(y).
\]
After integration with respect to $\mu_i$,
\begin{equation}\label{eq:active-domination-average}
 \int_M h(y,x)\,\dd\mu_i(y)\ge u_0(x)-m_i.
\end{equation}
Since the minimum in \eqref{eq:DFIZ-finite} is exactly $u_0(x)$, every
index realizing that minimum must satisfy $m_i=0$: if $m_i<0$, then
\eqref{eq:active-domination-average} is strictly larger than $u_0(x)$.
In particular $I_*\neq\emptyset$, and for every $x$ at least one minimizing
index belongs to $I_*$.

Fix such an $i$ and any $a\in \A_i$.  The measure $\mu_i$ is supported in
$\A_i$.  If $y\in \A_i$, then $a$ and $y$ belong to the same static class, so
\[
 h(a,y)+h(y,a)=0.
\]
Domination gives
\[
 u_0(y)-u_0(a)\le h(a,y),
 \qquad
 u_0(a)-u_0(y)\le h(y,a).
\]
Since the two left-hand sides and the two right-hand sides both sum to
zero, both inequalities are equalities. We have
\[
 h(y,a)=u_0(a)-u_0(y),
 \qquad
 h(a,y)=u_0(y)-u_0(a).
\]
The triangle inequality gives
\[
 h(y,x)\le h(y,a)+h(a,x)
 =u_0(a)-u_0(y)+h(a,x),
\]
and also
\[
 h(a,x)\le h(a,y)+h(y,x)
 =u_0(y)-u_0(a)+h(y,x).
\]
Consequently, for every $y\in \A_i$,
\[
h(y,x)=u_0(a)-u_0(y)+h(a,x).
\]
Integrating against $\mu_i$ and using $m_i=0$ gives
\[
 \int_M h(y,x)\,\dd\mu_i(y)=u_0(a)+h(a,x).
\]
The left-hand side realizes the minimum in \eqref{eq:DFIZ-finite}, so
\eqref{eq:active-representation} follows.
\end{proof}

\subsection{Global distance estimates}

The following lemma gives the global distance estimates used in
Subsection~\ref{subsec:finite-theorem-proof}.

For $r>0$, set
\[
 U_r^i:=\{z\in T^*M:d_{T^*M}(z,\Gamma_i)<r\},
 \qquad
 \mathcal U_r:=\bigcup_{i=1}^N U_r^i.
\]

\begin{lemma}\label{lem:finite-distance}
There is $C>0$ such that every lift of a $u_0$-calibrated curve
$Z_0:(-\infty,0]\to T^*M$ satisfies
\begin{equation}\label{eq:finite-global0}
 \int_{-\infty}^0 d_{T^*M}(Z_0(s),\widetilde{\A})\,\dd s\le C,
\end{equation}
and every lift of a $u_\lambda$-calibrated curve
$Z_\lambda:(-\infty,0]\to T^*M$ satisfies
\begin{equation}\label{eq:finite-global-lambda}
 \int_{-\infty}^0 e^{\lambda s}
 d_{T^*M}(Z_\lambda(s),\widetilde{\A})\,\dd s\le C
\end{equation}
for all sufficiently small $\lambda>0$.
\end{lemma}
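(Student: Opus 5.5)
Fix $\rho\le\rho_0$ so that Lemmas~\ref{lem:eq-local} and~\ref{lem:per-local} hold with common constants for all $i$ and the sets $U^i_{2\rho}$ are pairwise disjoint. Let $\delta:=\delta_{\mathcal U_\rho}>0$ be the lower bound of $q$ on $K\setminus\mathcal U_\rho$. Write $Z$ for either $Z_0$ (weight $\omega(s)\equiv1$) or $Z_\lambda$ (weight $\omega(s)=e^{\lambda s}$). We split $(-\infty,0]$ into times outside $\mathcal U_\rho$ and maximal connected intervals of times spent in $\mathcal U_{2\rho}$ that meet $\mathcal U_\rho$.

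\emph{Times outside $\mathcal U_\rho$.} On $\{s:Z(s)\notin\mathcal U_\rho\}$ the distance to $\widetilde\A$ is bounded by $\operatorname{diam}K$, and $q(Z(s))\ge\delta$ there. Hence
\[
\int_{\{Z\notin\mathcal U_\rho\}}\omega(s)\,d_{T^*M}(Z(s),\widetilde\A)\,\dd s\le \frac{C}{\delta}\int_{-\infty}^0\omega(s)\,q(Z(s))\,\dd s\le C,
\]
by \eqref{eq:q0} and \eqref{eq:qlambda}.

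\emph{Visits.} Each maximal interval $J=(a,b)$ with $Z(J)\subset U^i_{2\rho}$ that meets $U^i_\rho$ lies near a single $\Gamma_i$, by disjointness. On $J$ apply \eqref{eq:eq-point} or \eqref{eq:per-point}. For $Z_0$ with $\lambda=0$, use Lemma~\ref{lem:eq-local} with $\lambda=0$, or Lemma~\ref{lem:per-local}(ii) with $e=0$, since $H(Z_0)\equiv c$ along calibrated curves. This gives
\[
d(Z(s),\Gamma_i)\le C\rho\bigl(e^{-\mu(s-a)}+e^{-\mu(b-s)}\bigr)+C\lambda,
\]
with the right-endpoint term dropped when $a=-\infty$, and similarly for $b$. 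Integrating against the weight, each visit contributes at most $C\rho e^{\lambda b}+C\lambda\int_J e^{\lambda s}\,\dd s$ (with $\lambda=0$ for $Z_0$). The second term, summed over all visits, is at most $C\lambda\int_{-\infty}^0e^{\lambda s}\,\dd s=C$. So it remains to bound $\sum_J\omega(b_J)$, the (weighted) number of visits.

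\emph{Counting visits.} This is the main step. Every visit with $b_J<0$ ends on $\partial U^i_{2\rho}$, and it meets $U^i_\rho$. Before $b_J$ the orbit therefore makes a last crossing from $\partial U^i_\rho$ to $\partial U^i_{2\rho}$, during a subinterval $I_J\subset J$ that stays outside $U^i_\rho$. Because speeds are uniformly bounded on $K$, $|I_J|\ge\tau:=\rho/C>0$, and on $I_J$ we have $q\ge\delta$. The intervals $I_J$ are disjoint, and $e^{\lambda s}\ge e^{-\lambda\tau}e^{\lambda b_J}\ge\tfrac12e^{\lambda b_J}$ on $I_J$ for $\lambda$ small. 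Hence
\[
\sum_{J:\,b_J<0}\omega(b_J)\le\frac{2}{\delta\tau}\int_{-\infty}^0\omega\,q(Z)\,\dd s\le C.
\]
At most one additional visit ends at $b_J=0$, and it contributes $C\rho$. For $J$ with $a_J=-\infty$, the one-sided version of the estimate handles it. Combining the three contributions yields \eqref{eq:finite-global0} and \eqref{eq:finite-global-lambda}.

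The delicate points are two. First, the distance to $\widetilde\A$ equals the distance to $\Gamma_i$ inside $U^i_{2\rho}$ when $\rho$ is small. Second, Lemma~\ref{lem:per-local}(i) applies to the lift of $u_\lambda$-calibrated curves, and the energy bound \eqref{eq:energy} needs to hold at every time of the visit.
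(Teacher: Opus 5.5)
Your proposal is correct and follows the paper's proof essentially step for step. It uses the same split into times outside $\mathcal U_\rho$ (controlled by $q\ge\delta$ together with \eqref{eq:q0} and \eqref{eq:qlambda}) and maximal visits to $\mathcal U_{2\rho}$ that reach $\mathcal U_\rho$ (controlled by Lemmas~\ref{lem:eq-local} and~\ref{lem:per-local}), and it bounds the weighted number of visits by the same last-crossing argument from distance $\rho$ to $2\rho$. One small correction: if the crossing is long, $e^{\lambda s}\ge\frac12e^{\lambda b_J}$ need not hold on all of $I_J$, so integrate only over its final piece $[b_J-\tau,b_J]$, as the paper does.
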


\begin{proof}
Because the number of components $\Gamma_i$ is finite, choose $\rho>0$
so small that the neighborhoods $U_{2\rho}^i$, $i=1,\dots,N$, are
pairwise disjoint and all the local estimates of
Section~\ref{sec:hyperbolic} hold with common constants.
On the compact set \(K\) containing all lifts of $u_0$- and $u_\lambda$-calibrated curves, Lemma~\ref{lem:q} gives
\begin{equation}\label{eq:finite-qdelta}
 q\ge\delta>0
 \qquad\text{on }K\setminus\mathcal U_\rho.
\end{equation}
We also fix \(V>0\) such that
\[
 |\dot Z_0(s)|\le V,
 \qquad
 |\dot Z_\lambda(s)|\le V.
\]

We first consider the lift of a $u_0$-calibrated curve \(Z_0\).  Set
\[
 E_\rho:=\{s\le0:Z_0(s)\notin\mathcal U_\rho\}.
\]
By \eqref{eq:finite-qdelta}, \(q(Z_0(s))\ge\delta\) for \(s\in E_\rho\).
By \eqref{eq:q0}, we have
\[
 \delta |E_\rho|
 \le \int_{E_\rho}q(Z_0(s))\,\dd s
 \le \int_{-\infty}^0q(Z_0(s))\,\dd s
 \le C,
\]
and therefore
\begin{equation}\label{eq:finite-outside-time}
 |E_\rho|\le C.
\end{equation}

Consider the connected components \(I_j\) of
\[
 \{s\le0:Z_0(s)\in\mathcal U_{2\rho}\}
\]
which actually reach the inner neighborhood, i.e., $Z_0(I_j)\cap\mathcal U_\rho\neq\emptyset$. Since the sets \(U_{2\rho}^i\) are pairwise disjoint, each \(I_j\) is
associated with a unique index \(i(j)\) and $Z_0(I_j)\subset U_{2\rho}^{i(j)}$.
Let \(a_j\) and \(b_j\) denote the left and right endpoints of \(I_j\),
allowing \(a_j=-\infty\) and \(b_j=0\).  If \(b_j<0\), continuity and
maximality give
\[
 d_{T^*M}(Z_0(b_j),\Gamma_{i(j)})=2\rho.
\]
Because \(I_j\) reaches \(U_\rho^{i(j)}\), the set
\[
 \{s\in I_j:d_{T^*M}(Z_0(s),\Gamma_{i(j)})\le\rho\}
\]
is nonempty.  Define
\[
 \sigma_j:=\sup\{s\in I_j:
 d_{T^*M}(Z_0(s),\Gamma_{i(j)})\le\rho\}.
\]
Then \(\sigma_j<b_j\),
\[
 d_{T^*M}(Z_0(\sigma_j),\Gamma_{i(j)})=\rho,
\]
and
\[
 Z_0(s)\notin\mathcal U_\rho
 \qquad\text{for every }s\in(\sigma_j,b_j].
\]
Thus \([\sigma_j,b_j]\) is the last crossing, before the orbit leaves
\(U_{2\rho}^{i(j)}\), from distance \(\rho\) to distance \(2\rho\) from
\(\Gamma_{i(j)}\).  Since the velocity of $Z_0$ is bounded by \(V\),
\[
 \rho
 \le d_{T^*M}(Z_0(\sigma_j),Z_0(b_j))
 \le V(b_j-\sigma_j),
\]
so
\[
 b_j-\sigma_j\ge\frac{\rho}{V}.
\]
The intervals \([\sigma_j,b_j]\) are disjoint for different \(j\), and
\eqref{eq:finite-qdelta} holds on them.  Therefore
\[
 \#\{j:b_j<0\}\,\frac{\delta\rho}{V}
 \le
 \sum_{j:b_j<0}\int_{\sigma_j}^{b_j}q(Z_0(s))\,\dd s
 \le
 \int_{-\infty}^0q(Z_0(s))\,\dd s
 \le C.
\]
There is at most one further interval $I_j$ with \(b_j=0\).  Hence the
number of intervals $I_j$ is uniformly bounded.

We now estimate the distance integral on each interval $I_j$.  Lemmas~\ref{lem:eq-local} and \ref{lem:per-local} require the orbit segment to remain in $\mathcal U_{2\rho}$.  A
finite endpoint of $I_j$ may satisfy
$d_{T^*M}(Z_0,\Gamma_{i(j)})=2\rho$. In that case we first apply the estimate
on $[a_j+\varepsilon,b_j-\varepsilon]$, using the analogous one-sided
interval when only one endpoint is finite, and then let
$\varepsilon\downarrow0$.  If its left endpoint is finite,
Lemma~\ref{lem:eq-local} with \(\lambda=0\), or
Lemma~\ref{lem:per-local}(ii) with \(e=0\), integrated over \(I_j\), gives
\[
 \int_{I_j}d_{T^*M}(Z_0(s),\widetilde{\A})\,\dd s\le C\rho.
\]
If \(a_j=-\infty\), the corresponding left half-line estimate gives the same
bound.  Since $\#\{I_j\}$ is uniformly bounded, their total
contribution is bounded.

It remains to estimate the times which do not belong to any interval $I_j$.
Set
\[
 \mathcal R_0:=(-\infty,0]\setminus\bigcup_j I_j.
\]
By the definition of \(I_j\), $\mathcal R_0\subset E_\rho$.
By \eqref{eq:finite-outside-time}, \(|\mathcal R_0|\le C\).  Since \(K\) is
compact,
\[
 \sup_{z\in K}d_{T^*M}(z,\widetilde{\A})<\infty,
\]
which implies
\[
 \int_{\mathcal R_0}d_{T^*M}(Z_0(s),\widetilde{\A})\,\dd s\le C.
\]
This proves \eqref{eq:finite-global0}.

We next consider \(Z_\lambda\).  From
\eqref{eq:qlambda} and \eqref{eq:finite-qdelta},
\begin{equation}\label{eq:finite-weighted-outside}
 \int_{\{s\le0:Z_\lambda(s)\notin\mathcal U_\rho\}}
 e^{\lambda s}\,\dd s\le C.
\end{equation}
We use the same definition of the intervals \(I_j\): they are the connected components
of
\[
 \{s\le0:Z_\lambda(s)\in\mathcal U_{2\rho}\}
\]
satisfying $Z_\lambda(I_j)\cap\mathcal U_\rho\neq\emptyset$.  Again write \(a_j,b_j\) for their
endpoints.  If \(b_j<0\), define \(\sigma_j\) by the same formula as above,
with \(Z_0\) replaced by \(Z_\lambda\).  Then
\[
 b_j-\sigma_j\ge d_0:=\frac{\rho}{V},
 \qquad
 Z_\lambda(s)\notin\mathcal U_\rho
 \quad\text{for }s\in(\sigma_j,b_j].
\]
Consequently,
\[
 \int_{\sigma_j}^{b_j}e^{\lambda s}q(Z_\lambda(s))\,\dd s
 \ge \delta e^{\lambda b_j}\int_0^{b_j-\sigma_j}e^{-\lambda r}\,\dd r
 \ge \delta d_0e^{-\lambda_0d_0}e^{\lambda b_j}.
\]
The intervals \([\sigma_j,b_j]\) are disjoint.  By
\eqref{eq:qlambda} we conclude
\begin{equation}\label{eq:finite-exit-weights}
 \sum_{j:b_j<0}e^{\lambda b_j}\le C.
\end{equation}
There is at most one interval $I_j$ with \(b_j=0\).

For an interval $I_j$ with finite endpoints, using the same endpoint
approximation as for $Z_0$, the local estimate of
Lemma~\ref{lem:eq-local} or Lemma~\ref{lem:per-local}(i) gives
\[
 d_{T^*M}(Z_\lambda(s),\widetilde{\A})
 \le C\rho\bigl(e^{-\mu(s-a_j)}+e^{-\mu(b_j-s)}\bigr)+C\lambda
 \qquad \text{for every }s\in I_j.
\]
Since \(e^{\lambda s}\le e^{\lambda b_j}\) on \(I_j\),
\[
 \int_{I_j}e^{\lambda s}\rho
 \bigl(e^{-\mu(s-a_j)}+e^{-\mu(b_j-s)}\bigr)\,\dd s
 \le C e^{\lambda b_j}.
\]
The same bound follows from the left half-line estimate if \(a_j=-\infty\).
Therefore \eqref{eq:finite-exit-weights}, together with the one possible
interval with \(b_j=0\), controls all exponential terms.  The remaining
\(C\lambda\) terms satisfy
\[
 C\lambda\sum_j\int_{I_j}e^{\lambda s}\,\dd s
 \le C\lambda\int_{-\infty}^0e^{\lambda s}\,\dd s=C.
\]

Finally set
\[
 \mathcal R_\lambda:=(-\infty,0]\setminus\bigcup_j I_j.
\]
We have
\[
 \mathcal R_\lambda\subset
 \{s\le0:Z_\lambda(s)\notin\mathcal U_\rho\}.
\]
The distance to \(\widetilde{\A}\) is bounded on \(K\), so
\eqref{eq:finite-weighted-outside} gives
\[
 \int_{\mathcal R_\lambda}e^{\lambda s}
 d_{T^*M}(Z_\lambda(s),\widetilde{\A})\,\dd s\le C.
\]
Combining this with the estimates on the intervals $I_j$ proves
\eqref{eq:finite-global-lambda}.
\end{proof}

\subsection{Finite-time approximation of the Peierls barrier}

The Peierls barrier can be approximated by finite-time minimizers, but their durations will tend to infinity. 
The next lemma constructs a curve whose duration is $O(R)$ with action error $O(e^{-\kappa R})$.

\begin{lemma}\label{lem:peierls-compression}
There exist $C,\kappa>0$ such that, for every $x\in M$ and every
$R\ge1$, one can find an active component $\Gamma_i$, a point
$a\in\pi_M(\Gamma_i)$, a time $S\le CR+C$, and an absolutely continuous curve
$\eta_R:[0,S]\to M$ with $\eta_R(0)=a$ and $\eta_R(S)=x$, such that
\begin{equation}\label{eq:compression-defect}
 0\le
 \int_0^S\bigl(L(\eta_R,\dot\eta_R)+c\bigr)\,\dd t
 -\bigl(u_0(x)-u_0(a)\bigr)
 \le Ce^{-\kappa R}.
\end{equation}
\end{lemma}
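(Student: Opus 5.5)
We plan to start from Lemma~\ref{lem:active-representation}: for the given $x$ pick $i\in I_*$ with $u_0(x)=u_0(a)+h(a,x)$ for every $a\in\A_i$. The lower bound in \eqref{eq:compression-defect} is automatic for any curve from $a$ to $x$, since $u_0$ is a critical subsolution and so $u_0(x)-u_0(a)\le h_S(a,x)$. Everything therefore hinges on producing a curve of length $O(R)$ whose action exceeds $u_0(x)-u_0(a)$ by at most $Ce^{-\kappa R}$.

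The key input is a backward $u_0$-calibrated curve $\gamma_0:(-\infty,0]\to M$ ending at $x$, which has action exactly $u_0(x)-u_0(\gamma_0(t))$ on $[t,0]$. Lemma~\ref{lem:finite-distance} and the counting argument in its proof show that $Z_0$ makes a uniformly bounded number of visits to $\mathcal U_{2\rho}$ that reach $\mathcal U_\rho$, and spends total time $\le C$ outside $\mathcal U_\rho$. Its $\alpha$-limit lies in some $\Gamma_j$, and the last visit, on $(-\infty,b]$, is a half-line visit. For this visit Lemma~\ref{lem:eq-local} or Lemma~\ref{lem:per-local}(ii) with $e=0$ gives $d(Z_0(s),\Gamma_j)\le C\rho e^{-\mu(b-s)}$. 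Since every finite visit lasts at least the crossing time and the number of visits is bounded, but a single finite visit may be arbitrarily long, we will use the half-line estimate instead of a time bound on $b$.

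The construction is the curve cutting of Figure~\ref{fig:peierls-compression}:
\begin{itemize}
\item Inside each long visit (finite or infinite) to $U^{i}_{2\rho}$, the orbit is $C\rho e^{-\mu R}$-close to $\Gamma_i$ at times at distance $\ge R$ from both endpoints.
\item We excise the middle portion and glue the two pieces by a short connecting curve of duration adjusted to the period (or of duration $O(1)$ at an equilibrium).
\item The cost of the splice is $O(e^{-\kappa R})$. On $\Gamma_i$ the action $\int(L+c)$ over one period is zero, since $\Gamma_i$ lies in the Aubry set and the closed loop has zero Peierls action; the remaining error is quadratic or linear in the splice distance, which is $O(e^{-\mu R})$.
\end{itemize}

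Applied to the half-line visit, this lets us start at a point $a'$ on $\Gamma_j$ near $\gamma_0(b-R)$: we connect $a'$ to $\gamma_0(b-R)$ in time $1$ at cost $O(e^{-\mu R})$. Since $u_0$ is Lipschitz, the comparison $u_0(\gamma_0(b-R))-u_0(\pi a')=O(e^{-\mu R})$ also holds. The result has total duration $\le C+ CR\cdot\#\{\text{visits}\}=O(R)$.

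A subtlety is that $j$ may not be active, while the lemma demands an active $\Gamma_i$. We plan to handle this with the representation identity rather than with $\gamma_0$: since $u_0(x)-u_0(a)=h(a,x)$ for active $i$, take near-minimizers of $h_{T_n}(a,x)$ with $T_n\to\infty$. Their lifts are conservative orbits of energy $c+e_n$, $e_n\to0$, so Lemma~\ref{lem:per-local}(ii) applies to them, and the analogous $q$-count bounds their number of excursions away from $\widetilde{\A}$. We then apply the same cutting to every long visit, including the initial visit to $\Gamma_i$. Passing $n\to\infty$ with $R$ fixed, the errors $C|e_n|$ vanish and leave $O(e^{-\kappa R})$.

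The main obstacle is the splice near a periodic orbit: choosing the cut times so that the phases match. We intend to cut at times where the orbit is within $e^{-\mu R}$ of the same point of $\Gamma_i$, using a shadowing-type argument along the stable and unstable fibers. The action of the connector can then be estimated by a local $C^1$ calibrated subsolution $w$ near $\Gamma_i$, with $q\ge0$ and $q=O(\text{dist}^2)$ nearby. Uniformity of the constants in $x$ and $R$ comes from the common constants of Section~\ref{sec:hyperbolic}.
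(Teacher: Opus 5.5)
Your final route is essentially the paper's proof: Lemma~\ref{lem:active-representation}, Tonelli minimizers of $h_{T_n}(a,x)$ whose lifts have energy $c+e_n\to0$, the $q$-count of visits, cutting each visit longer than $2R$ at distance $R$ from its endpoints, and fixing $n$ large for the given $R$. Two steps you assert without proof, $e_n\to0$ and the uniform compactness of the lifts, need the short $q$-argument the paper gives.

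The phase-matching problem you single out as the main obstacle does not arise. The paper joins $\gamma_n(s_{n,j})$ and $\gamma_n(t_{n,j})$ by unit-speed geodesics of length $O(e^{-\kappa R})$ to the projections of nearby points $z^\pm\in\Gamma$. In between it follows $\Gamma$ forward for at most one period, and this arc has zero defect because $\Gamma$ is $u_0$-calibrated. So no shadowing, full-loop action, or local quadratic subsolution is needed.
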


In Subsection~\ref{subsec:finite-theorem-proof} we apply the lemma with $R=\frac1\kappa\log\frac1\lambda$.

\begin{proof}
Fix $x\in M$.  By Lemma~\ref{lem:active-representation}, choose an active
component $\Gamma_i$ and a point $a\in\pi_M(\Gamma_i)$ such that $h(a,x)=u_0(x)-u_0(a)$.
By the definition of the Peierls barrier there are $T_n\to\infty$ such that $h_{T_n}(a,x)\to h(a,x)$. Let $\gamma_n:[0,T_n]\to M$ be a Tonelli minimizer realizing
$h_{T_n}(a,x)$.  Define
\[
 p_n(t):=L_v(\gamma_n(t),\dot\gamma_n(t)),
 \qquad
 Z_n(t):=(\gamma_n(t),p_n(t))\in T^*M.
\]
By the Euler--Lagrange equation, $Z_n$ is an orbit of the conservative
Hamiltonian vector field $X_H$.  Set
\[
 D_n:=h_{T_n}(a,x)-\bigl(u_0(x)-u_0(a)\bigr).
\]
Domination of $u_0$ gives $D_n\ge0$, while the choice of $T_n$ gives
$D_n\to0$.  We next obtain a compact set containing these Hamiltonian trajectories,
uniformly for all sufficiently large $n$.  Since $T_n\to\infty$, we may
assume $T_n\ge1$.  The restriction of $\gamma_n$ to $[0,1]$ minimizes the
action between $\gamma_n(0)$ and $\gamma_n(1)$.  Let $\sigma_n:[0,1]\to M$
be a minimizing geodesic joining these two points, parametrized with constant
speed.  Since
\[
 d_M(\gamma_n(0),\gamma_n(1))\le \operatorname{diam}(M),
\]
we have $|\dot\sigma_n|\le\operatorname{diam}(M)$.  Compactness of $M$ then
gives a constant $C_0$, depending only on $H$ and $M$, such that
\[
 \int_0^1\bigl(L(\gamma_n,\dot\gamma_n)+c\bigr)\,\dd s
 \le
 \int_0^1\bigl(L(\sigma_n,\dot\sigma_n)+c\bigr)\,\dd s
 \le C_0.
\]
By superlinearity there are constants $A>0$ and $B$ such that
\[
 L(x,v)+c\ge A|v|-B
\]
for all $(x,v)\in TM$.  Hence,
\[
 \int_0^1|\dot\gamma_n(s)|\,\dd s\le C,
\]
so there exists $t_n\in[0,1]$ with
$|\dot\gamma_n(t_n)|\le C$.  Consequently
$p_n(t_n)=L_v(\gamma_n(t_n),\dot\gamma_n(t_n))$ belongs to a fixed compact
set.  Since $H$ is autonomous, $H(Z_n(t))$ is constant in $t$; its value is
therefore uniformly bounded.  Uniform coercivity of $H$ in $p$ now gives a
uniform bound for $p_n(t)$ for every $t\in[0,T_n]$.  Thus, there is a compact
set $K_1\subset T^*M$, depending only on $H$ and $M$, such that $Z_n(t)\in K_1$ for every $t\in[0,T_n]$
for all sufficiently large $n$.

Write the constant Hamiltonian energy in the form
\[
 H(Z_n(t))\equiv c+e_n.
\]
We claim that $e_n\to0$.  Indeed, since
$h_{T_n}(a,x)\to u_0(x)-u_0(a)$ and $u_0$ is bounded,
\[
 \int_0^{T_n}q(Z_n(t))\,\dd t
 =h_{T_n}(a,x)-w(x)+w(a)\le C_q
\]
for all sufficiently large $n$, where $C_q$ is independent of $x$ and of
the chosen $a$.

Suppose by contradiction that $e_n\not\to0$.  Passing to a subsequence,
there is $\varepsilon>0$ such that $|e_n|\ge\varepsilon$ for every $n$.
Since $Z_n([0,T_n])\subset K_1$ and $H(Z_n(t))\equiv c+e_n$,
the whole orbit is contained in the compact set
\[
 K_1\cap\{|H-c|\ge\varepsilon\}.
\]
This set is disjoint from $\widetilde{\A}=q^{-1}(0)$, because
$H=c$ on $\widetilde{\A}$.  Hence $q\ge\delta_\varepsilon>0$ there, and
therefore
\[
 C_q\ge\int_0^{T_n}q(Z_n(t))\,\dd t
 \ge\delta_\varepsilon T_n,
\]
which contradicts $T_n\to\infty$.  We conclude that $e_n\to0$, and
Lemma~\ref{lem:per-local}(ii) applies for all sufficiently large $n$.

We now modify the long minimizing curve.  Choose \(\rho>0\) so small that
the neighborhoods \(U_{2\rho}^i\), \(i=1,\dots,N\), are pairwise
disjoint and all local estimates of Section~\ref{sec:hyperbolic} hold with common constants. On \(K_1\setminus\mathcal U_\rho\), $q\ge\delta>0$.
Let \(V_1\) bound \(|\dot Z_n|\) on \(K_1\).

Let \(I_{n,j}\), \(1\le j\le J_n\), be the connected components of
\[
 \{t\in[0,T_n]:Z_n(t)\in\mathcal U_{2\rho}\}
\]
which reach \(\mathcal U_\rho\), i.e., $Z_n(I_{n,j})\cap\mathcal U_\rho\neq\emptyset$.  Set
\[
 a_{n,j}:=\inf I_{n,j},
 \qquad
 b_{n,j}:=\sup I_{n,j}.
\]
Then
\[
 Z_n(I_{n,j})\subset U_{2\rho}^{i(n,j)}
\]
for a unique component \(\Gamma_{i(n,j)}\).  Since $q\ge\delta$ on $K_1\setminus\mathcal U_\rho$ and
$|\dot Z_n|\le V_1$, the same crossing argument as in the proof of
Lemma~\ref{lem:finite-distance}, with at most two additional components meeting the endpoints $0$ and
$T_n$, gives
\begin{equation}\label{eq:number-visits}
   J_n\le J,
\end{equation}
where $J$ is independent of $n$ and $x$, and
\begin{equation}\label{eq:peierls-outside-time}
 \left|[0,T_n]\setminus\bigcup_{j=1}^{J_n}I_{n,j}\right|\le C.
\end{equation}

Let \(\mu>0\) be a common exponent in the local estimates and fix $0<\kappa<\mu$.
Fix \(R\ge1\).  Since \(D_n\to0\) and \(e_n\to0\),
we may choose \(n=n(R)\) so large that
\begin{equation}\label{eq:choose-peierls}
 D_n+|e_n|\le e^{-\kappa R},
 \qquad |e_n|\le e_0.
\end{equation}

By domination of $u_0$, for any finite family of pairwise disjoint
subintervals $[r_k,t_k]\subset[0,T_n]$,
\begin{equation}\label{eq:subinterval-defect}
 0\le
 \sum_k\left[
 \int_{r_k}^{t_k}
 \bigl(L(\gamma_n,\dot\gamma_n)+c\bigr)\,\dd\tau
 -u_0(\gamma_n(t_k))+u_0(\gamma_n(r_k))
 \right]
 \le D_n.
\end{equation}

Fix one interval \(I_{n,j}\).  If $b_{n,j}-a_{n,j}\le2R$,
we do not modify this part of \(\gamma_n\). Suppose now that
$b_{n,j}-a_{n,j}>2R$.
Define
\[
 s_{n,j}:=a_{n,j}+R,
 \qquad
 t_{n,j}:=b_{n,j}-R.
\]
Then
 $a_{n,j}<s_{n,j}<t_{n,j}<b_{n,j}$,
so both times belong to \(I_{n,j}\), and the whole interval between them is
still inside the same \(U_{2\rho}^{i(n,j)}\).

If
\(\Gamma_{i(n,j)}\) is an equilibrium, Lemma~\ref{lem:eq-local} with
\(\lambda=0\) gives
\[
\begin{aligned}
 d_{T^*M}(Z_n(s_{n,j}),\Gamma_{i(n,j)})
 &\le C\rho\left(e^{-\mu R}
 +e^{-\mu(b_{n,j}-a_{n,j}-R)}\right)
 \le Ce^{-\mu R},\\
 d_{T^*M}(Z_n(t_{n,j}),\Gamma_{i(n,j)})
 &\le C\rho\left(e^{-\mu(b_{n,j}-a_{n,j}-R)}
 +e^{-\mu R}\right)
 \le Ce^{-\mu R}.
\end{aligned}
\]
If \(\Gamma_{i(n,j)}\) is a periodic orbit, Lemma~\ref{lem:per-local} gives instead
\[
 d_{T^*M}(Z_n(s_{n,j}),\Gamma_{i(n,j)})
 +d_{T^*M}(Z_n(t_{n,j}),\Gamma_{i(n,j)})
 \le C\bigl(e^{-\mu R}+|e_n|\bigr).
\]
Since \(\kappa<\mu\) and \eqref{eq:choose-peierls} holds, in both cases
\begin{equation}\label{eq:close-to-component}
 d_{T^*M}(Z_n(s_{n,j}),\Gamma_{i(n,j)})
 +d_{T^*M}(Z_n(t_{n,j}),\Gamma_{i(n,j)})
 \le Ce^{-\kappa R}.
\end{equation}
Choose \(z_{n,j}^-,z_{n,j}^+\in\Gamma_{i(n,j)}\) realizing the two
distances in \eqref{eq:close-to-component}, and put
\[
 y_{n,j}^-:=\pi_M(z_{n,j}^-),
 \qquad
 y_{n,j}^+:=\pi_M(z_{n,j}^+).
\]
Then
\begin{equation}\label{eq:close-phases}
 d_M(\gamma_n(s_{n,j}),y_{n,j}^-)
 +d_M(\gamma_n(t_{n,j}),y_{n,j}^+)
 \le Ce^{-\kappa R}.
\end{equation}

We keep \(\gamma_n\) on \([a_{n,j},s_{n,j}]\) and
\([t_{n,j},b_{n,j}]\).  Between the two cut points, join
\(\gamma_n(s_{n,j})\) to \(y_{n,j}^-\) and \(y_{n,j}^+\) to
\(\gamma_n(t_{n,j})\) by minimizing geodesics parametrized with unit speed.
Their durations are $d_M(\gamma_n(s_{n,j}),y_{n,j}^-)$ and $d_M(y_{n,j}^+,\gamma_n(t_{n,j}))$,
and their sum is at most \(Ce^{-\kappa R}\) by
\eqref{eq:close-phases}.  Since the two geodesics have unit speed and \(M\) is compact, the
critical action along either geodesic is bounded by $Ce^{-\kappa R}$.  Together with the Lipschitz continuity and domination of \(u_0\),
this shows that the two connecting geodesics have total defect at most
\(Ce^{-\kappa R}\).

It remains to connect \(y_{n,j}^-\) to \(y_{n,j}^+\) along $\pi_M(\Gamma_{i(n,j)})$.  If \(\Gamma_{i(n,j)}\) is an equilibrium, these two
points coincide.  If it is a periodic orbit, move from \(z_{n,j}^-\) to
\(z_{n,j}^+\) in the forward Hamiltonian direction. This takes at most one
prime period.  Along $\pi_M(\Gamma_{i(n,j)})$,
\[
 \frac{\dd}{\dd t}u_0(x(t))
 =\langle p(t),\dot x(t)\rangle
 =L(x(t),\dot x(t))+c,
\]
which implies that the critical action of the segment of $\pi_M(\Gamma_{i(n,j)})$ is exactly
\(u_0(y_{n,j}^+)-u_0(y_{n,j}^-)\).  Together with the two short connecting
geodesics, this shows that the replacement of \(\gamma_n\) on
\([s_{n,j},t_{n,j}]\) has nonnegative defect bounded by \(Ce^{-\kappa R}\).

\begin{figure}[htbp]
\centering
\includegraphics[width=0.7\textwidth]{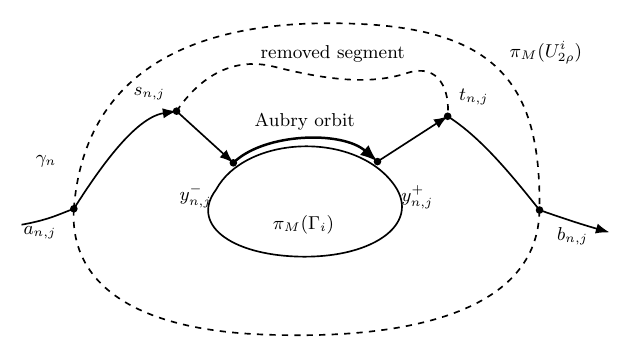}
\caption{Construction of $\eta_R$.}
\label{fig:peierls-compression}
\end{figure}

Perform this replacement on every interval $I_{n,j}$ of length greater than
\(2R\), and denote the resulting curve by \(\eta_R:[0,S]\to M\).  It still
joins \(a\) to \(x\).  By \eqref{eq:subinterval-defect}, the total defect of
the portions left unchanged is at most \(D_n\).  By
\eqref{eq:number-visits}, at most \(J\) replacements are made.  Therefore
\[
 0\le{}\int_0^S\bigl(L(\eta_R,\dot\eta_R)+c\bigr)\,\dd t
 -\bigl(u_0(x)-u_0(a)\bigr)
 \le{}D_n+CJe^{-\kappa R}\le Ce^{-\kappa R}.
\]
This proves \eqref{eq:compression-defect}.

We finally estimate the duration of the new curve.  By
\eqref{eq:peierls-outside-time}, the portions outside the intervals $I_{n,j}$
have total duration at most \(C\).  An interval $I_{n,j}$ of length at most
\(2R\) is unchanged and contributes at most \(2R\).  If an interval $I_{n,j}$
has length greater than \(2R\), the new curve keeps the first and last
pieces, each of duration \(R\), and replaces the middle by two geodesics of total duration at most \(Ce^{-\kappa R}\).  If $\Gamma_{i(n,j)}$ is a periodic orbit, the 
segment of it has duration at most one prime period. Since there are only finitely
many components of the Aubry set, these periods are uniformly bounded by a constant $P>0$.  Therefore
\[
 S\le C+J(2R+Ce^{-\kappa R}+P)\le CR+C.
\]
The proof is complete.
\end{proof}

\subsection{Proof of Theorem~\ref{thm:finite-main}}\label{subsec:finite-theorem-proof}

We first prove the lower bound in \eqref{eq:intro-finite-rate}.  In view of
\eqref{eq:lower}, it is enough to prove
\[
 \int_{-\infty}^0e^{\lambda s}u_0(\gamma_\lambda(s))\,\dd s\le C
\]
for every $u_\lambda$-calibrated curve ending at \(x\).  We now
construct a bounded function \(\Phi\) on phase space for which
\(X_H\Phi\) agrees with \(u_0-m_i\) on each $\Gamma_i$.  Away from the
components $\Gamma_i$, the error will be bounded by the distance to the Aubry set, and
Lemma~\ref{lem:finite-distance} will then give the required integral bound.

We first define \(\Phi\) near $\Gamma_i$.  If
\(\Gamma_i=\{(x_i,p_i)\}\) is an equilibrium, then $m_i=u_0(x_i)$, so \(u_0(x_i)-m_i=0\). Set
\(\Phi_i=0\) near $\Gamma_i$.  If \(\Gamma_i\) is a periodic orbit, choose a point \(z_i^0\in\Gamma_i\) and let
\(\varphi_H^t\) denote the Hamiltonian flow of \(X_H\).  Define
\[
 z_i(t):=\varphi_H^t(z_i^0)=(x_i(t),p_i(t)).
\]
Then
\[
 z_i(0)=z_i^0,
 \qquad
 \dot z_i(t)=X_H(z_i(t)),
 \qquad
 z_i(t+P_i)=z_i(t),
\]
where \(P_i\) is the prime period.  The invariant probability carried by
\(\Gamma_i\) is normalized time measure. We have
\[
 m_i=\frac1{P_i}\int_0^{P_i}u_0(x_i(t))\,\dd t.
\]
Define
\[
 F_i(t):=\int_0^t\bigl(u_0(x_i(s))-m_i\bigr)\,\dd s.
\]
The integrand has zero average over one period, so \(F_i\) is
\(P_i\)-periodic and bounded.

By the graph property of the Aubry set,
\[
 Du_0(x_i(t))=p_i(t).
\]
We have
\[
 \frac{\dd}{\dd t}u_0(x_i(t))
 =\langle p_i(t),\dot x_i(t)\rangle.
\]
The right-hand side is \(C^1\) in \(t\), so the restriction of \(u_0\) to
the projected orbit, and hence \(F_i\), is \(C^2\).  Define
\[
 \Phi_i(z_i(t)):=F_i(t)
 \qquad\text{on }\Gamma_i,
\]
and extend \(\Phi_i\) to a \(C^2\) function supported in a slightly larger
small neighborhood, without changing it near \(\Gamma_i\).  On the orbit,
\[
X_H\Phi_i(z_i(t))
 =\dot F_i(t)
 =u_0(x_i(t))-m_i.
\]
Since there are finitely many disjoint components $\Gamma_i$, their
neighborhoods can be chosen disjoint.  Using the functions just constructed,
and \(\Phi_i=0\) near equilibrium components, we obtain a bounded
\(C^2\) function \(\Phi\) on \(T^*M\) which satisfies
\[X_H\Phi=u_0\circ \pi_M-m_i\qquad \text{on }\Gamma_i,\ i=1,\dots,N.\]

We next extend the identity from each \(\Gamma_i\) to a nearby point with an
explicit error.  By the construction above, the function $u_0\circ\pi_M-m_i-X_H\Phi$ is Lipschitz on a fixed neighborhood of $\Gamma_i$ and vanishes on
$\Gamma_i$.  Hence
\begin{equation}\label{eq:Phi-distance}
 |u_0(\pi_Mz)-m_i-X_H\Phi(z)|
 \le C d_{T^*M}(z,\Gamma_i)
\end{equation}
for $z$ in this neighborhood.

Recall that the compact set \(K\) fixed in Subsection~\ref{subsec:variational} contains all
lifts of $u_0$- and $u_\lambda$-calibrated curves considered here.  Enlarging it
if necessary, we may also assume that it contains the finitely many fixed
neighborhoods used above.  Since the momentum coordinate is bounded on
\(K\),
\[
 X_\lambda(x,p)-X_H(x,p)=(0,-\lambda p),
 \qquad
 |X_\lambda-X_H|\le C\lambda
 \quad\text{on }K.
\]
As \(\Phi\in C^2\), this implies
\[
 |X_\lambda\Phi-X_H\Phi|\le C\lambda
 \quad\text{on }K.
\]
Combining this with \eqref{eq:Phi-distance},
\[
 |u_0(\pi_Mz)-m_i-X_\lambda\Phi(z)|
 \le C d_{T^*M}(z,\Gamma_i)+C\lambda
\]
near \(\Gamma_i\).  Let \(U\) be the union of these fixed neighborhoods.  Since
\(K\setminus U\) is compact and disjoint from \(\widetilde{\A}\), there is
\(d_*>0\) such that
\[
 d_{T^*M}(z,\widetilde{\A})\ge d_*
 \qquad\text{for every }z\in K\setminus U.
\]
Moreover \(u_0\circ\pi_M-X_\lambda\Phi\) is uniformly bounded on \(K\)
for \(0<\lambda\le\lambda_0\).  Using \(m_i\le0\) in $U$ and increasing \(C\) if necessary, we obtain the
global one-sided estimate
\begin{equation}\label{eq:finite-u0-upper}
 u_0(\pi_Mz)
 \le X_\lambda\Phi(z)
 +C d_{T^*M}(z,\widetilde{\A})+C\lambda
 \qquad \text{for }z\in K.
\end{equation}

Let $Z_\lambda(s)=(\gamma_\lambda(s),p_\lambda(s))$ be the lift of a $u_\lambda$-calibrated curve with
\(\gamma_\lambda(0)=x\). We have
\[
 X_\lambda\Phi(Z_\lambda(s))
 =\frac{\dd}{\dd s}\Phi(Z_\lambda(s)).
\]
Substituting \(z=Z_\lambda(s)\) in \eqref{eq:finite-u0-upper}, multiplying
by \(e^{\lambda s}\), and integrating gives
\[
\begin{aligned}
 \int_{-\infty}^0 e^{\lambda s}u_0(\gamma_\lambda(s))\,\dd s
 &\le
 \int_{-\infty}^0 e^{\lambda s}
 \frac{\dd}{\dd s}\Phi(Z_\lambda(s))\,\dd s\\
 &\quad+C\int_{-\infty}^0 e^{\lambda s}
 d_{T^*M}(Z_\lambda(s),\widetilde{\A})\,\dd s
 +C\lambda\int_{-\infty}^0e^{\lambda s}\,\dd s.
\end{aligned}
\]
Since \(\Phi\) is bounded,
\(e^{\lambda s}\Phi(Z_\lambda(s))\to0\) as \(s\to-\infty\).  We get
\[
 \int_{-\infty}^0 e^{\lambda s}
 \frac{\dd}{\dd s}\Phi(Z_\lambda(s))\,\dd s
 =\Phi(Z_\lambda(0))
 -\lambda\int_{-\infty}^0e^{\lambda s}\Phi(Z_\lambda(s))\,\dd s,
\]
which is bounded by \(2\|\Phi\|_\infty\).  The second term is
bounded by Lemma~\ref{lem:finite-distance}, while for the last term, $\lambda\int_{-\infty}^0e^{\lambda s}\,\dd s=1$.
Hence,
\[
 \int_{-\infty}^0e^{\lambda s}u_0(\gamma_\lambda(s))\,\dd s\le C.
\]
Substitution into \eqref{eq:lower} gives
\begin{equation}\label{eq:finite-lower}
 u_\lambda(x)-u_0(x)\ge-C\lambda.
\end{equation}

We next prove the upper bound.  We first record a linear estimate on the
active components.  Let $\Gamma_i$ be active and let
$a\in\A_i=\pi_M(\Gamma_i)$.  If $\Gamma_i$ is an equilibrium, the constant curve at $a$ is
$u_0$-calibrated. Then
$u_0(a)=m_i=0$, and \eqref{eq:upper} gives
\[
 u_\lambda(a)-u_0(a)\le0.
\]
If $\Gamma_i$ is periodic, choose $t_a$ such that $x_i(t_a)=a$.  Since
$m_i=0$, the function $F_i$ defined above satisfies $F_i'(t)=u_0(x_i(t))$ and is bounded and periodic.  Hence
\[
\begin{aligned}
 \int_{-\infty}^0 e^{\lambda s}u_0(x_i(t_a+s))\,\dd s
 &=
 F_i(t_a)
 -\lambda\int_{-\infty}^0
 e^{\lambda s}F_i(t_a+s)\,\dd s,
\end{aligned}
\]
and therefore its absolute value is bounded by $2\|F_i\|_\infty$.
Applying \eqref{eq:upper} to this calibrated orbit, and using the finiteness
of the components, we obtain
\begin{equation}\label{eq:active-point-upper}
 u_\lambda(a)-u_0(a)\le C\lambda
 \qquad
 \text{for every $a$ in an active component.}
\end{equation}

Fix now $x\in M$ and $R\ge1$.  By
Lemma~\ref{lem:peierls-compression}, there exist an active component
$\Gamma_i$, a point $a\in\A_i$, a time $S\le CR+C$, and an absolutely
continuous curve $\eta_R:[0,S]\to M$ joining $a$ to $x$ such that
\[
 D_R:=
 \int_0^S\bigl(L(\eta_R,\dot\eta_R)+c\bigr)\,\dd t
 -\bigl(u_0(x)-u_0(a)\bigr)
 \le Ce^{-\kappa R}.
\]
By domination of $u_0$, the absolutely continuous function
\[
 r_R(t):=
 \int_0^t\bigl(L(\eta_R,\dot\eta_R)+c\bigr)\,\dd s
 -\bigl(u_0(\eta_R(t))-u_0(a)\bigr)
\]
is nondecreasing.  We have $r_R(0)=0$, $r_R(S)=D_R$, and
$r_R'\ge0$ almost everywhere.  Integrating by parts gives
\[
\begin{aligned}
 &\int_0^S e^{-\lambda(S-t)}
 \bigl(L(\eta_R(t),\dot\eta_R(t))+c\bigr)\,\dd t\\
 &\qquad=
 u_0(x)-e^{-\lambda S}u_0(a)
 -\lambda\int_0^S e^{-\lambda(S-t)}u_0(\eta_R(t))\,\dd t
 +\int_0^S e^{-\lambda(S-t)}r_R'(t)\,\dd t\\
 &\qquad\le
 u_0(x)-e^{-\lambda S}u_0(a)
 +C\lambda S+D_R,
\end{aligned}
\]
where in the last inequality we used $0<e^{-\lambda(S-t)}\le1$ and $r_R'\ge0$. By the dynamic programming principle, we have
\[
 u_\lambda(x)
 \le
 e^{-\lambda S}u_\lambda(a)
 +\int_0^S e^{-\lambda(S-t)}
 \bigl(L(\eta_R(t),\dot\eta_R(t))+c\bigr)\,\dd t.
\]
Combining the last two inequalities with
\eqref{eq:active-point-upper}, $S\le CR+C$, and
$D_R\le Ce^{-\kappa R}$, we obtain
\begin{equation}\label{eq:finite-upper-R}
 u_\lambda(x)-u_0(x)
 \le C\lambda R+Ce^{-\kappa R}+C\lambda.
\end{equation}
After decreasing \(\lambda_0\) if necessary, assume
\(0<\lambda<\min\{e^{-\kappa},e^{-1}\}\) and choose
\[
 R=\frac1\kappa\log\frac1\lambda.
\]
Then \(e^{-\kappa R}=\lambda\), and \eqref{eq:finite-upper-R} gives
\[
 u_\lambda(x)-u_0(x)
 \le C\lambda\left(1+\log\frac1\lambda\right)
 \le C\lambda|\log\lambda|.
\]
The constants are uniform in \(x\).  Together with \eqref{eq:finite-lower},
this proves \eqref{eq:intro-finite-rate}.

\smallskip

Finally assume that $m_i=0$ for every $i$. Then
\eqref{eq:Phi-distance} gives
\[
 \bigl|u_0(\pi_Mz)-X_H\Phi(z)\bigr|
 \le C d_{T^*M}(z,\widetilde{\A}),
\]
and
\[
 \bigl|u_0(\pi_Mz)-X_\lambda\Phi(z)\bigr|
 \le C d_{T^*M}(z,\widetilde{\A})+C\lambda.
\]
As before, these inequalities are first obtained in fixed neighborhoods of
$\Gamma_i$ and then extended to all of $K$ by enlarging $C$.

Fix $x\in M$, and let $\gamma_0$ and $\gamma_\lambda$ be 
$u_0$- and $u_\lambda$-calibrated curves ending at $x$, with lifts
$Z_0$ and $Z_\lambda$. We have
\[
 X_H\Phi(Z_0(s))=\frac{\dd}{\dd s}\Phi(Z_0(s)),
 \qquad
 X_\lambda\Phi(Z_\lambda(s))
 =\frac{\dd}{\dd s}\Phi(Z_\lambda(s)).
\]
Using \eqref{eq:finite-global0},
\eqref{eq:finite-global-lambda}, and integration by parts, we obtain
\[
 \left|
 \int_{-\infty}^0e^{\lambda s}u_0(\gamma_0(s))\,\dd s
 \right|
 +
 \left|
 \int_{-\infty}^0e^{\lambda s}u_0(\gamma_\lambda(s))\,\dd s
 \right|
 \le C.
\]
Here, 
$e^{\lambda s}\le1$, so the unweighted estimate
\eqref{eq:finite-global0} is sufficient.  Thus the two integrals appearing
in \eqref{eq:upper} and \eqref{eq:lower} are uniformly bounded.  We conclude
\[
 |u_\lambda(x)-u_0(x)|\le C\lambda
\]
uniformly in $x$.  This proves \eqref{eq:intro-active-rate} and completes the
proof of Theorem~\ref{thm:finite-main}.

\begin{proof}[Proof of Corollary~\ref{cor:single-intro}]
If $N=1$, Lemma~\ref{lem:active-representation} implies that the unique
component is active.  The second part of Theorem~\ref{thm:finite-main}
therefore gives
$\|u_\lambda-u_0\|_\infty\le C\lambda$.
\end{proof}

\section{One-well examples: sharp linear rate and degenerate cases}\label{sec:one-well}

We give two types of one-dimensional examples.  The first family shows that
the linear rate is optimal in the hyperbolic case and gives slower algebraic
lower bounds when the equilibrium in the Aubry set is degenerate of finite order.  The second
example shows that, when the degeneracy is of infinite order, the convergence
can be arbitrarily slow.

\subsection{Finite-order examples}

Let $M=\T^1$ and fix an integer $r\ge1$.  Choose $V\in C^\infty(\T)$ such
that $V\le0$, $V^{-1}(0)=\{0\}$, and, in a local coordinate centered at $0$,
\[
V(x)=-x^{2r}
\]
for $|x|$ small.  We also choose the periodic completion so that $V$ has a
single other critical point, which is a nondegenerate minimum.  Set
\[
a_*:=\int_{\T}\sqrt{-2V(x)}\,\dd x
\]
and fix $a$ with $0<a<a_*$.  Consider
\[
H_a(x,p)=\frac12(p+a)^2+V(x).
\]
Its Lagrangian is
\[
L_a(x,v)=\frac12v^2-av-V(x).
\]
Define a periodic
$C^1$ function $w$ by
\[
w'(x)=\frac{a}{a_*}\sqrt{-2V(x)}-a,
\]
where the right-hand side has zero average.  Then
\[
H_a(x,w'(x))
=\left(1-\frac{a^2}{a_*^2}\right)V(x)\le0,
\]
with strict inequality for $x\neq0$.  The subsolution above gives $c(H_a)\le0$.  On the other hand,
$\max_x\min_p H_a(x,p)=\max V=0$, so $c(H_a)\ge0$ and therefore
$c(H_a)=0$.  The invariant measure supported at $(x,v)=(0,0)$ has zero
critical action and is minimizing.  Hence $0$ belongs to the projected Mather
set, and therefore to $\A(H_a)$.
Since the critical subsolution $w$ is strict at every $x\ne0$, $\A(H_a)\subset\{0\}$. We have $\A(H_a)=\{0\}$.  Finally the lifted Aubry set lies in the
critical energy level, and
\[
 0=H_a(0,p)=\frac12(p+a)^2
\]
forces $p=-a$.  Hence, $\widetilde{\A}(H_a)=\{(0,-a)\}$.

Let $u_\lambda$ be the solution of
\begin{equation}\label{eq:ex-disc}
\lambda u_\lambda+\frac12(u_\lambda'+a)^2+V(x)=0\qquad\text{in }\T.
\end{equation}
\begin{proposition}\label{prop:finite-order-rate}
There exists $C_r>0$ such that
\[
  \|u_\lambda-u_0\|_\infty
  \ge C_r\lambda^{1/(2r-1)}
\]
for all sufficiently small $\lambda>0$. If $r=1$, then
\[
  \|u_\lambda-u_0\|_\infty\asymp\lambda.
\]
If $r\ge2$, the estimate
$\|u_\lambda-u_0\|_\infty=O(\lambda)$ fails.
\end{proposition}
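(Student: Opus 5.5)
The plan is to test everything at the single Aubry point $x=0$, and to produce an explicit competitor curve for the discounted problem which shows that $u_\lambda(0)$ is negative of order $\lambda^{1/(2r-1)}$. First, the value $u_0(0)$ is pinned down. Since $\widetilde{\A}(H_a)=\{(0,-a)\}$ consists of a single component, Lemma~\ref{lem:active-representation} shows that this component is active. Its projected Mather measure is $\delta_0$, so $m_1=u_0(0)=0$. It therefore suffices to prove
\[
 u_\lambda(0)\le -C_r\lambda^{1/(2r-1)} .
\]

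To bound $u_\lambda(0)$ from above, I use the discounted representation formula (dynamic programming) with a two-piece backward curve ending at $0$. Fix a small $\xi>0$.
\begin{itemize}
\item \emph{Resting piece.} The curve rests at the point $-\xi$ for all times $s\le -\xi/a$.
\item \emph{Moving piece.} On $[-\xi/a,0]$ it moves with constant velocity $v=a>0$ from $-\xi$ to $0$.
\end{itemize}
On the resting piece the Lagrangian equals $L_a(-\xi,0)=-V(-\xi)=\xi^{2r}$ (using $c(H_a)=0$). Its discounted contribution is therefore at most $\xi^{2r}/\lambda$; equivalently, $u_\lambda(-\xi)\le \xi^{2r}/\lambda$. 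On the moving piece the Lagrangian is
\[
 \tfrac12a^2-a^2+O(\xi^{2r})=-\tfrac12a^2+O(\xi^{2r}),
\]
and the duration is $\xi/a$. Since $\xi/a$ is small, the discount factor there is $1+O(\lambda\xi)$. Together these give
\[
 u_\lambda(0)\le \frac{\xi^{2r}}{\lambda}-\frac a2\,\xi+O(\xi^{2r+1})+O(\lambda\xi^2).
\]
The key point is the sign choice: we approach $0$ from the left, so that the term $-av$ in $L_a$ produces a gain that is linear in $\xi$.

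Now optimize in $\xi$. The balance $\xi^{2r}/\lambda\sim \xi$ gives the choice $\xi=\theta\lambda^{1/(2r-1)}$ with a small fixed $\theta>0$. Then
\[
 u_\lambda(0)\le \bigl(\theta^{2r}-\tfrac a2\theta\bigr)\lambda^{1/(2r-1)}+o\bigl(\lambda^{1/(2r-1)}\bigr),
\]
and the bracket is negative for small $\theta$. This yields $u_\lambda(0)-u_0(0)\le -C_r\lambda^{1/(2r-1)}$, and hence the asserted lower bound for $\|u_\lambda-u_0\|_\infty$. For $r\ge2$ the exponent satisfies $1/(2r-1)<1$, so $\lambda^{1/(2r-1)}/\lambda\to\infty$ and an $O(\lambda)$ bound is impossible.

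For $r=1$ the origin is a hyperbolic equilibrium of $X_{H_a}$, since $V''(0)=-2<0$ makes $H_a$ have a saddle at $(0,-a)$. So Corollary~\ref{cor:single-intro} gives $\|u_\lambda-u_0\|_\infty\le C\lambda$. The same construction with $r=1$ gives the matching lower bound $c\lambda$, hence $\asymp\lambda$.

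The main obstacle I expect is justifying the representation with a curve whose resting point is $-\xi$ rather than a point of the Aubry set. This is routine, because $u_\lambda(0)\le \inf_\gamma\int_{-\infty}^0 e^{\lambda s}(L_a+c)\,\dd s$ holds for every absolutely continuous $\gamma$. The remaining care is in tracking the error terms uniformly; one must also note that $\lambda\xi^2\ll\xi$ and $\xi^{2r+1}\ll\xi$, so the linear gain $-\tfrac a2\xi$ dominates.
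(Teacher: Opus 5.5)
Your proof is correct, but it takes a different route from the paper.

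\textbf{The paper's route.} The paper first identifies the discounted Mather set in Lemma~\ref{lem:example-mather}: it is the single discounted equilibrium $(x_\lambda,-a)$ with $|x_\lambda|=\bigl(\frac{a\lambda}{2r}\bigr)^{1/(2r-1)}$. Establishing this takes three ingredients:
\begin{itemize}
\item the graph property of $\widetilde{\M}_\lambda$;
\item a second-variation argument excluding the equilibrium near the other critical point of $V$, which uses the assumption that $V$ has exactly one other, nondegenerate, critical point;
\item an exclusion of rotational minimizing orbits using $0<a<a_*$.
\end{itemize}
The paper then reads off the exact value $u_\lambda(x_\lambda)=-V(x_\lambda)/\lambda=\frac{a}{2r}|x_\lambda|$ from the equation. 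It compares this with $u_0(x_\lambda)=a|x_\lambda|+O(|x_\lambda|^{r+1})$, obtained by integrating $u_0'=-a\pm\sqrt2|x|^r$ from $u_0(0)=0$.

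\textbf{Your route.} You test at the Aubry point $0$ itself and need only a one-sided bound on $u_\lambda(0)$. A single explicit competitor supplies it: rest at $-\xi$, then move at the optimal speed $v=a$. Your choice $\xi\sim\lambda^{1/(2r-1)}$ is exactly the balance $\xi^{2r}/\lambda\sim\xi$ that locates $x_\lambda$. I checked your expansion and error terms, including the case $r=1$, and they are fine.

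\textbf{What each route buys.} Your argument is much more elementary. It uses only the discounted representation formula and $u_0(0)=0$, and it is insensitive to the global shape of $V$. The paper's argument yields more structure: the location of the discounted Mather set and the exact value of $u_\lambda$ on it, which explains the mechanism behind the rate $\lambda^{1/(2r-1)}$.

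\textbf{One small citation issue.} Lemma~\ref{lem:active-representation} is stated under (A), which fails for $r\ge2$ because the equilibrium is then degenerate. Its proof never uses hyperbolicity, so your conclusion stands. It is cleaner, though, to get $u_0(0)=0$ directly as the paper does: $\delta_0$ is the only projected Mather measure, so the selection formula gives $u_0=h(0,\cdot)$, and hence $u_0(0)=h(0,0)=0$.
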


To prove the proposition, we first identify the discounted Mather set
$\widetilde{\M}_\lambda\subset T^*\T$ associated with the exact conformally
symplectic flow of \eqref{eq:ex-disc}.  It is nonempty, compact and invariant,
and it is contained in the graph of $Du_\lambda$ over its projection; see
\cite{MaroSorrentino}.  Since it is the closure of the union of supports of
invariant minimizing measures, Poincar\'e recurrence also implies that
recurrent points are dense in $\widetilde{\M}_\lambda$.

\begin{lemma}\label{lem:example-mather}
For all sufficiently small $\lambda>0$,
\[
\widetilde{\M}_\lambda=\{(x_\lambda,-a)\},
\qquad
x_\lambda=-\left(\frac{a\lambda}{2r}\right)^{1/(2r-1)}.
\]
\end{lemma}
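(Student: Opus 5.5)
The plan is to reduce everything to the equilibria of the discounted flow near $(0,-a)$. First I will show that $\widetilde{\M}_\lambda$ is confined to a small neighborhood of $\widetilde{\A}(H_a)=\{(0,-a)\}$. Then I will use the one-dimensional graph structure to show that its recurrent points are equilibria. Finally I will compute the only equilibrium in that neighborhood.

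The discounted Hamiltonian flow of \eqref{eq:ex-disc} is
\[
 \dot x=p+a,\qquad \dot p=-V'(x)-\lambda p .
\]
Its equilibria are the points with $p=-a$ and $V'(x)=\lambda a$. Near $0$ we have $V'(x)=-2r x^{2r-1}$, which is strictly monotone on a small interval $(-\delta,\delta)$. So for small $\lambda$ there is exactly one equilibrium with $|x|<\delta$, namely $x_\lambda=-(a\lambda/2r)^{1/(2r-1)}$.

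\textbf{Step 1 (localization).} I will show that for every neighborhood $W$ of $(0,-a)$ we have $\widetilde{\M}_\lambda\subset W$ once $\lambda$ is small. Suppose not. Then there are $\lambda_k\to0$ and discounted minimizing invariant measures $\nu_k$ on $\widetilde{\M}_{\lambda_k}$ whose supports meet the complement of $W$.
\begin{itemize}
\item The $\nu_k$ live in a fixed compact set, because the $u_\lambda$ are equi-Lipschitz.
\item Any weak limit of the $\nu_k$ is invariant under the conservative flow and minimizes $\int (L_a+c)$. This is the standard fact, used in \cite{DFIZ} and in Gomes' approach, that limits of discounted minimizing measures are Mather measures. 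It follows by passing to the limit in the invariance relation and in $\int(L_a+c)\,d\nu_k=\lambda_k\int u_{\lambda_k}\,d\nu_k\to0$.
\item Hence the limit is supported in $\widetilde{\A}(H_a)=\{(0,-a)\}$, that is, it is the Dirac mass there.
\end{itemize}
This only shows that the measures concentrate near $(0,-a)$, not their supports, so I need a sharper argument. I would use the strict subsolution $w$ and the function $q$ of Lemma~\ref{lem:q}. In the form of \eqref{eq:qlambda}, integrated against an invariant measure, this gives
\[
\int q\,d\nu_k=\lambda_k\int (u_{\lambda_k}-w)\,d\nu_k=O(\lambda_k).
\]
Take a point of $\operatorname{supp}\nu_k$ outside $W$. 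Its orbit under the time-$t$ flow stays out of a smaller neighborhood for a time of order one, and along that stretch $q\ge\delta_W$. Poincar\'e recurrence and invariance should then turn this into a positive lower bound on $\int q\,d\nu_k$, contradicting $O(\lambda_k)$. I expect this step, upgrading convergence of measures to convergence of supports uniformly in $\lambda$, to be the main obstacle. The recurrence/$q$ argument above is what I would try first.

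\textbf{Step 2 (recurrent points are equilibria).} By \cite{MaroSorrentino}, $\widetilde{\M}_\lambda$ is a compact invariant subset of the graph of $Du_\lambda$ over its projection. By Step~1 this projection lies in $(-\delta,\delta)$, an interval and not the whole circle. Restricted to the graph, the flow projects to a one-dimensional dynamics $\dot x=Du_\lambda(x)+a$ on a compact subset of an interval. In such dynamics every recurrent point must be a rest point, since a nonstationary orbit on a line is strictly monotone and cannot return near itself. So every recurrent point of $\widetilde{\M}_\lambda$ is an equilibrium of the discounted flow inside the neighborhood, and therefore equals $(x_\lambda,-a)$.

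\textbf{Step 3 (conclusion).} Recurrent points are dense in $\widetilde{\M}_\lambda$ (Poincar\'e recurrence), and $\widetilde{\M}_\lambda$ is nonempty and closed. By Step~2 the recurrent points form the single point $(x_\lambda,-a)$, so $\widetilde{\M}_\lambda=\{(x_\lambda,-a)\}$.
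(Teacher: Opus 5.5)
Your Steps 2 and 3 are sound given Step 1; Step 2 is the paper's no-turning-point argument restricted to an interval. The gap is Step 1, and it is exactly where the global content of the lemma lies.

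The bound $\int q\,\dd\nu_k=O(\lambda_k)$ only says, by Chebyshev, that $\nu_k(\{q\ge\delta_W\})=O(\lambda_k)$. It controls mass, not support. A point of $\operatorname{supp}\nu_k$ outside $W$ bounds $\int q\,\dd\nu_k$ from below only in proportion to the $\nu_k$-mass near that point. That mass has no lower bound uniform in $k$, and Poincar\'e recurrence gives returns but no control on return times.

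Concretely, nothing in your estimate excludes a rotational periodic orbit winding around $\T$ with period $T_k\gg1/\lambda_k$. Such an orbit could creep past $x=0$, where $|\dot x|=\sqrt{-2(V+\lambda_k u_{\lambda_k})}$ can be tiny. Its normalized time measure gives the complement of $W$ mass only $O(1/T_k)$, so it is compatible with $\int q=O(\lambda_k)$. The weighted bound \eqref{eq:qlambda} along calibrated curves has the same blind spot. It only controls $\sum_j e^{\lambda s_j}$ over the excursion times $s_j$, and that sum stays bounded when excursions are separated by more than $1/\lambda$. Note also that your argument never uses $0<a<a_*$ beyond identifying $\widetilde{\A}$, yet that inequality is precisely what rules such orbits out.

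The missing step is to exclude these non-local invariant sets directly on all of $\T$. This is what the paper does, and it makes localization unnecessary.

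\begin{enumerate}
\item Running your Step 2 argument on the circle shows that a recurrent orbit in $\widetilde{\M}_\lambda$ is either an equilibrium or a rotational periodic orbit.
\item For small $\lambda$ there are exactly two equilibria: $(x_\lambda,-a)$, and $(y_\lambda,-a)$ with $y_\lambda$ near the nondegenerate minimum of $V$. The paper excludes the second by a second-variation argument: the discounted action has negative second variation at the constant curve. Alternatively, if $(y_\lambda,-a)$ lay on the graph of $Du_\lambda$, then $u_\lambda'(y_\lambda)=-a$. The equation would then give $\lambda u_\lambda(y_\lambda)=-V(y_\lambda)$. This is impossible for small $\lambda$, since $u_\lambda$ is uniformly bounded while $-V(y_\lambda)\to-\min V>0$.
\item On a rotational orbit, $\dot x=Du_\lambda+a$ has a fixed sign. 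So one branch $Du_\lambda=-a\pm\sqrt{-2(V+\lambda u_\lambda)}$ holds around the whole circle. Periodicity of $u_\lambda$ then gives $0=-a\pm\int_\T\sqrt{-2(V+\lambda u_\lambda)}\,\dd x\to-a\pm a_*$, contradicting $0<a<a_*$.
\end{enumerate}

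With these two exclusions, density of recurrent points (your Step 3) gives $\widetilde{\M}_\lambda=\{(x_\lambda,-a)\}$ directly.
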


\begin{proof}
The discounted Hamiltonian equations are
\[
\dot x=p+a,\qquad \dot p=-V'(x)-\lambda p.
\]
An equilibrium therefore satisfies $p=-a$ and $V'(x)=\lambda a$.  For
small $\lambda$, there are exactly two such equilibria.  Indeed, away from
fixed small neighborhoods of the two critical points of $V$, the quantity
$|V'|$ is bounded below by a positive constant.  Near $0$ the prescribed
local form makes $V'$ strictly monotone on each side, while near the other
critical point nondegeneracy gives the same conclusion.  Thus one solution
$x_\lambda$ lies near the degenerate maximum $0$ and one solution
$y_\lambda$ lies near the nondegenerate minimum of $V$.  Since
$V'(x)=-2r x^{2r-1}$ near $0$,
\[
x_\lambda=-\left(\frac{a\lambda}{2r}\right)^{1/(2r-1)}.
\]

We first exclude $(y_\lambda,-a)$.  If this equilibrium belonged to the
Mather set, the corresponding constant curve would be minimizing for the
discounted action.  Since $L_a(x,-v)=\frac12v^2+av-V(x)$, reversing time in the variational
representation of \eqref{eq:ex-disc} gives the functional
\[
I(\gamma)=\int_0^\infty e^{-\lambda t}
\left(\frac12|\dot\gamma|^2+a\dot\gamma-V(\gamma)\right)\dd t,
\qquad \gamma(0)=y_\lambda.
\]
For a variation $\gamma_\varepsilon=y_\lambda+\varepsilon\eta$ with
$\eta(0)=0$ and compact support, the first variation vanishes because
$V'(y_\lambda)=\lambda a$, while
\[
\delta^2I(\eta)=\int_0^\infty e^{-\lambda t}
\left(|\dot\eta|^2-V''(y_\lambda)\eta^2\right)\dd t.
\]
Since $y_\lambda$ converges to a nondegenerate minimum of $V$, there is
$\kappa>0$ such that $V''(y_\lambda)\ge\kappa$ for small $\lambda$.
Writing $\eta(t)=e^{\lambda t/2}\phi(t)$ with
$\phi\in C_c^\infty((0,\infty))$ gives
\[
\delta^2I(\eta)=\int_0^\infty
\left(|\dot\phi|^2+
\left(\frac{\lambda^2}{4}-V''(y_\lambda)\right)|\phi|^2\right)\dd t.
\]
For small $\lambda$, this is negative for a smooth function supported on a
sufficiently long interval, a contradiction.  Thus, 
$(y_\lambda,-a)\notin\widetilde{\M}_\lambda$.

It remains to exclude rotational periodic minimizing orbits.  Suppose that
for a sequence $\lambda_n\to0$ the set
$\widetilde{\M}_{\lambda_n}$ contains such an orbit.  Along a rotational orbit in the 
Mather set, $u_{\lambda_n}$ is differentiable.  Since
$\dot x=Du_{\lambda_n}+a$ has a fixed sign on such an orbit, the same branch
is used throughout one complete turn.  By \eqref{eq:ex-disc},
\[
Du_{\lambda_n}
=-a\pm\sqrt{-2\bigl(V+\lambda_nu_{\lambda_n}\bigr)}.
\]
By periodicity of $u_{\lambda_n}$,
\[
0=-a\pm\int_{\T}
\sqrt{-2\bigl(V(x)+\lambda_nu_{\lambda_n}(x)\bigr)}\,\dd x.
\]
The family $u_\lambda$ is uniformly bounded.  Hence the integrand converges
uniformly to $\sqrt{-2V}$, and passing to the limit yields $a=\pm a_*$,
contrary to $0<a<a_*$.

Finally, recurrent points are dense in $\widetilde{\M}_\lambda$.  In one
dimension a recurrent orbit in $\widetilde{\M}_\lambda$ is either a fixed
point or a rotational periodic orbit.  Indeed, a nonfixed recurrent orbit on
a graph over $\T$ cannot have a turning point, since this would force the
same base point to be visited with two different momenta.  Its projected
velocity therefore has a fixed sign. Recurrence rules out convergence to a
fixed point, so the projection winds around the circle.  After one turn the
orbit returns to the same point of the graph of $Du_\lambda$ and is therefore
periodic.  We have
excluded the latter and also the only other fixed point
$(y_\lambda,-a)$.  Therefore
$\widetilde{\M}_\lambda\subset\{(x_\lambda,-a)\}$.  Since the Mather set is
nonempty, equality follows.
\end{proof}

\begin{proof}[Proof of Proposition~\ref{prop:finite-order-rate}]
By Lemma~\ref{lem:example-mather},
$(x_\lambda,-a)\in\widetilde{\M}_\lambda$.  Since
$\widetilde{\M}_\lambda$ is contained in the graph of $Du_\lambda$,
$u_\lambda$ is differentiable at $x_\lambda$ and
$u_\lambda'(x_\lambda)=-a$.  Evaluating \eqref{eq:ex-disc} at $x_\lambda$
gives
\[
u_\lambda(x_\lambda)
=-\frac{V(x_\lambda)}{\lambda}
=\frac{x_\lambda^{2r}}{\lambda}
=-\frac{a}{2r}x_\lambda
=\frac{a}{2r}|x_\lambda|.
\]

The limiting Aubry set consists only of the point $0$, so the only projected
Mather measure is $\delta_0$.  The representation formula of \cite{DFIZ}
therefore gives $u_0(x)=h(0,x)$ and in particular $u_0(0)=0$.  Since $u_0$ solves the critical equation, for almost every $x$
sufficiently close to $0$,
\[
u_0'(x)=-a\pm\sqrt{-2V(x)}=-a\pm\sqrt2\,|x|^r.
\]
Therefore
\[
\big|u_0(x_\lambda)-a|x_\lambda|\big|
\le \frac{\sqrt2}{r+1}|x_\lambda|^{r+1}.
\]
Combining this with the value of $u_\lambda(x_\lambda)$ gives, for all
sufficiently small $\lambda$,
\[
u_0(x_\lambda)-u_\lambda(x_\lambda)
\ge a\left(1-\frac1{2r}\right)|x_\lambda|-\frac{\sqrt2}{r+1}|x_\lambda|^{r+1}
\ge C_r|x_\lambda|.
\]
Hence,
\[
\|u_\lambda-u_0\|_\infty
\ge C_r\lambda^{1/(2r-1)}.
\]
If $r=1$, then $V''(0)=-2$ and the equilibrium in the Aubry set is hyperbolic.  In this
case the estimate above reads $\|u_\lambda-u_0\|_\infty\ge C_1\lambda$,
while Corollary~\ref{cor:single-intro} gives the upper bound.  We conclude that $\|u_\lambda-u_0\|_\infty\asymp\lambda$,
and the linear rate is optimal even for a one-dimensional mechanical
Hamiltonian with a single hyperbolic equilibrium in the Aubry set.

If $r\ge2$, then $V''(0)=0$ and $1/(2r-1)<1$, so the $O(\lambda)$ estimate
fails.  Moreover, the exponents $1/(2r-1)$ tend to zero as $r\to\infty$.
Thus, even within this elementary one-dimensional class, there is no positive
algebraic convergence exponent valid for every Hamiltonian once the
hyperbolicity assumption is removed.
\end{proof}

\subsection{Arbitrarily slow convergence}\label{subsec:arbitrarily-slow}

The following construction was provided by Hung V. Tran.
\begin{proposition}\label{prop:arbitrarily-slow}
Let $\omega:[0,\infty)\to[0,\infty)$ be nondecreasing, with $\omega(0)=0$ and
$\omega(r)\to0$ as $r\downarrow0$.  There exists $V\in C^\infty(\T)$ such that
\[
  V\le0,\qquad V^{-1}(0)=\{0\},\qquad
  V^{(m)}(0)=0\quad\text{for every }m\ge1,
\]
and, for $H(x,p)=\frac12p^2+V(x)$, the corresponding discounted solutions satisfy
\begin{equation}\label{eq:arbitrary-slow-goal}
  \|u_\lambda-u_0\|_\infty\ge\omega(\lambda)
\end{equation}
for every sufficiently small $\lambda>0$.
\end{proposition}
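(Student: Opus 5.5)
The plan is to build the example by hand. I will reduce the lower bound \eqref{eq:arbitrary-slow-goal} to a one-sided comparison at a single point, using an explicit competitor curve. Write $V=-\tfrac12 f^2$, where $f\in C^\infty(\T)$ satisfies $f>0$ on $\T\setminus\{0\}$ and $f^{(m)}(0)=0$ for all $m$; then $V$ has all the required properties. Since $L(x,v)=\tfrac12v^2-V(x)\ge0$, we get $c(H)=0$. As in the finite-order example, $w$ with $w'=\pm f$ suitably chosen is a subsolution that is strict off $0$, so $\A=\{0\}$ and $\widetilde\A=\{(0,0)\}$, and the only projected Mather measure is $\delta_0$. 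The representation of \cite{DFIZ} then gives $u_0=h(0,\cdot)$. For $0<x\le\delta$ with $\delta$ small, this is $u_0(x)=\int_0^x f$, because the energy-zero orbit leaving $0$ to the right realizes the barrier. Fix such a $\delta$, and perform all modifications of $f$ inside $(0,\delta)$.

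\emph{Competitor estimate.} Fix $0<y<\delta$. Let $T(y):=\int_y^\delta \dd s/f(s)$ be the time the energy-zero orbit $\dot\gamma=f(\gamma)$ takes from $y$ to $\delta$. Concatenate a near-optimal curve from $0$ to $y$ (the same orbit, started at $0$ at time $-\infty$, with action $\int_0^y f$) with this orbit from $y$ to $\delta$. Along the last piece $L=f^2=f\dot\gamma$ and $e^{\lambda s}\le1$. In the backward representation formula the first piece is weighted by at most $e^{-\lambda T(y)}$. This gives
\[
 u_\lambda(\delta)\le \int_y^\delta f+e^{-\lambda T(y)}\int_0^y f,
 \qquad\text{hence}\qquad
 u_0(\delta)-u_\lambda(\delta)\ge\bigl(1-e^{-\lambda T(y)}\bigr)A(y),\quad A(y):=\int_0^y f .
\]
So it suffices to ensure the following: for every small $\lambda$ there is $y$ with $\lambda T(y)\ge1$ and $(1-e^{-1})A(y)\ge\omega(\lambda)$.

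\emph{Construction of $f$.} Choose decreasing scales $y_k\downarrow0$ and build $f$ on $[y_{k+1},y_k]$ inductively from the outside in. Each annulus contains a ``plateau'' where $f$ equals a tiny constant $\varepsilon_k$ on an interval of length comparable to $y_k$; this forces $T(y_{k+1})\ge c\,y_k/\varepsilon_k$, which can be made as large as we like. For $\lambda\in[1/T(y_{k+1}),1/T(y_k))$ we use $y=y_{k+1}$. The requirement then becomes
\[
 A(y_{k+1})\ge C\,\omega\bigl(1/T(y_k)\bigr).
\]
The right side is fixed by data already chosen (outer annuli), and it can be made tiny by choosing the previous plateau value $\varepsilon_{k-1}$ small. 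So at step $k$ I first choose $\varepsilon_k$ so small that $\omega(1/T(y_{k+1}))$ is below a prescribed threshold $\theta_{k+1}$. Then I require the inner mass $A(y_{k+1})\ge C\theta_k$ by placing, inside $(y_{k+2},y_{k+1})$, a smooth bump of mass $\ge C\theta_k$. Throughout I keep $\sup_{[0,y_k]}|f^{(m)}|\le 2^{-k}$ for $m\le k$.

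\emph{Main obstacle.} The delicate point is compatibility of the two competing demands: the mass requirement $A(y_{k+1})\gtrsim\theta_k$ versus infinite-order flatness, which forces $A(y)=o(y^m)$ for every $m$. The resolution I would try is the order of quantifiers. Each $\theta_k$ is not prescribed in advance; it is produced by the choice of the plateau $\varepsilon_{k-1}$, which may be taken as small as needed. So first choose the bump at scale $y_{k+1}$ with derivatives below $2^{-k}$ (its mass is some $\beta_k>0$). Then choose the outer plateau $\varepsilon_{k-1}$ so small that $\omega(1/T(y_k))\le\beta_k/C$, which is possible since $\omega(r)\to0$ as $r\downarrow0$. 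This means fixing all bump profiles first, as functions of the scales, and choosing the plateaus afterwards. Plateaus only lower $f$, which preserves flatness provided the transitions are done with fixed smooth cutoffs. Monotonicity of $\omega$ then covers the whole interval of $\lambda$ between consecutive thresholds, and smoothness at $0$ follows from the summable derivative bounds.
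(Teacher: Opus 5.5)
Your proposal is correct and is essentially the paper's construction. You write $V=-\tfrac12 f^2$ with bumps of fixed, super-polynomially small mass in each dyadic annulus, then choose regions where $f$ is tiny afterwards in terms of $\omega$ to force long travel times, and use the zero-energy orbit as competitor; splitting its action at time $-T(y)$ is just another form of the paper's bound $u_0(R)-u_\lambda(R)\ge(1-e^{-1})u_0(\gamma(-1/\lambda))$. One small slip: $w'=\pm f$ gives $H(x,w')\equiv0$, which is not strict, but $w\equiv0$ works since $H(x,0)=V(x)<0$ for $x\neq0$.
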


Set $\alpha_k:=2^{-k^2-4}$ for $k\ge1$.  Since $\omega(r)\to0$ as $r\downarrow0$, we may
choose a strictly decreasing sequence $\lambda_k\downarrow0$ such that
\begin{equation}\label{eq:arbitrary-lambda}
  \lambda_{k+1}<\frac12\lambda_k,
  \qquad
  \omega(\lambda_k)\le(1-e^{-1})\alpha_k
  \qquad\text{for every }k\ge1.
\end{equation}

Fix $0<R<1/2$ and set $r_k:=2^{-k}R$ for $k\ge 1$.

The construction uses two different parts of $\sqrt{-2V}$ on each interval
$(r_{k+1},r_k)$.  We place smooth bumps away from a closed interval $D_k$ so
that the limiting solution will satisfy $u_0(r_k)\ge\alpha_k$.  We then add a
smooth function $\eta$ which is positive on $(0,R]$ but very small on each
$D_k$.  Its positivity ensures that $V(x)<0$ for $x\ne0$, which implies
$\A=\{0\}$. Its smallness on $D_k$ makes the calibrated curve spend a
long time crossing that interval.  The discounted representation formula and
an integration by parts then turn these two estimates into the lower bound
\eqref{eq:arbitrary-slow-goal}.

For every $k\ge1$, choose a closed interval
$D_k\Subset(r_{k+1},r_k)$ of length $(r_k-r_{k+1})/4$.  We also choose
$\psi_k\in C_c^\infty((r_{k+1},r_k)\setminus D_k)$ such that
\[
  \psi_k\ge0,\qquad
  \int_{r_{k+1}}^{r_k}\psi_k(x)\,\dd x=\alpha_k-\alpha_{k+1},\qquad
  \|\psi_k^{(m)}\|_{L^\infty}
  \le C_m(\alpha_k-\alpha_{k+1})r_k^{-m-1}
\]
for every $m\ge0$.  This can be done by rescaling a fixed smooth bump.  Since
$\alpha_k\le2^{-k^2-4}$ and $r_k\asymp2^{-k}$,
\[
  \|\psi_k^{(m)}\|_{L^\infty}
  \le C_m2^{-k^2+k(m+1)}\longrightarrow0
  \qquad\text{as }k\to\infty.
\]
The supports of the $\psi_k$ are disjoint and converge to $0$, so
$\sum_{k=1}^\infty\psi_k$ extends smoothly to $0$, with all derivatives equal to zero there.

We next construct $\eta\in C^\infty([0,R])$ which is positive on $(0,R]$, has all
its derivatives equal to zero at $0$, and is sufficiently small on each $D_k$.  Choose a
locally finite smooth partition of unity $\{\theta_j\}_{j\ge1}$ on $(0,R]$ such that, with
$r_0:=2R$,
\[
  0\le\theta_j\le1,\qquad
  \operatorname{supp}\theta_j\subset(r_{j+2},r_{j-1}),\qquad
  \|\theta_j^{(m)}\|_{L^\infty}\le C_mr_j^{-m}
\]
for every $m\ge0$.  We may arrange that at most four of the functions $\theta_j$ are
nonzero at any point and that
\[
  \operatorname{supp}\theta_j\cap D_k\ne\varnothing
  \quad\Longrightarrow\quad |j-k|\le3.
\]
Choose $\varepsilon_j>0$ so small that
\[
  \varepsilon_j\le r_j^j,
  \qquad
  \varepsilon_j\le\frac14|D_k|\lambda_{k+2}
  \quad\text{whenever }\operatorname{supp}\theta_j\cap D_k\ne\varnothing,
\]
and define
\[
  \eta(x):=\sum_{j=1}^\infty\varepsilon_j\theta_j(x)
  \qquad\text{for }x\in(0,R],
  \qquad \eta(0):=0.
\]
Then $\eta(x)>0$ for $x\in(0,R]$.  If $x\in D_k$, at most four terms occur, so
\begin{equation}\label{eq:arbitrary-eta}
  \eta(x)\le |D_k|\lambda_{k+2}.
\end{equation}
Moreover, for every fixed $m\ge0$ and $x\in(r_{k+1},r_k)$,
\[
  |\eta^{(m)}(x)|
  \le C_m\max_{|j-k|\le3}\varepsilon_jr_j^{-m}
  \le C_m\max_{|j-k|\le3}r_j^{j-m}\longrightarrow0
  \qquad\text{as }k\to\infty.
\]
Hence $\eta\in C^\infty([0,R])$ and $\eta^{(m)}(0)=0$ for every $m\ge0$.
Since $D_k\subset (0,r_1)$ for every $k\ge 1$, we may modify $\eta$ on
$[r_1,R]$, without affecting any of the preceding estimates, so that it is
positive and constant in a neighborhood of $R$.

On $[0,R]$, define
\[
  V(x):=-\frac12\left(\eta(x)+\sum_{k=1}^\infty\psi_k(x)\right)^2.
\]
Then $V\in C^\infty([0,R])$, $V(0)=0$, $V(x)<0$ for $x\in(0,R]$, and all derivatives
of $V$ vanish at $0$.  Since $\sum_{k=1}^\infty\psi_k$ vanishes near $R$ and
$\eta$ is constant there, $V$ is constant in a neighborhood of $R$.
Extend $V$ smoothly to $[R,1]$ so that
\[
  V(1)=0,\qquad V(x)<0\quad\text{for }R\le x<1,
  \qquad V^{(m)}(1)=0\quad\text{for every }m\ge1,
\]
and
\begin{equation}\label{eq:arbitrary-long-arc}
  \int_R^1\sqrt{-2V(x)}\,\dd x
  >\int_0^R\sqrt{-2V(x)}\,\dd x.
\end{equation}
This gives a smooth function on $\T$ satisfying the properties above.

For $H(x,p)=\frac12p^2+V(x)$, the same argument as in the preceding
subsection gives $c(H)=0$ and $\A=\{0\}$. The Lagrangian is $L(x,v)=\frac12v^2-V(x)$.  Since $L\ge0$,
the discounted representation formula gives $u_\lambda\ge0$. Using the constant curve at $0$, we conclude $u_\lambda(0)=0$.
The selected critical solution is
\[
  u_0(x)=\min\left\{
    \int_0^x\sqrt{-2V(y)}\,\dd y,
    \int_x^1\sqrt{-2V(y)}\,\dd y
  \right\}.
\]
By \eqref{eq:arbitrary-long-arc},
\begin{equation}\label{eq:arbitrary-u0}
  u_0(x)=\int_0^x\sqrt{-2V(y)}\,\dd y
  \qquad\text{for }0\le x\le R.
\end{equation}

Let $\gamma:(-\infty,0]\to(0,R]$ be the solution of
\[
  \dot\gamma(s)=\sqrt{-2V(\gamma(s))},
  \qquad \gamma(0)=R.
\]
For $k\ge2$, the interval $D_{k-1}$ lies in $(r_k,r_{k-1})$, and every $\psi_j$ vanishes
on $D_{k-1}$.  Hence,
\[
  \sqrt{-2V(x)}=\eta(x)
  \qquad\text{for }x\in D_{k-1}.
\]
By \eqref{eq:arbitrary-eta}, the time needed to travel from $r_k$ to $R$ satisfies
\begin{equation}\label{eq:arbitrary-travel}
  \int_{r_k}^R\frac{\dd x}{\sqrt{-2V(x)}}
  \ge\int_{D_{k-1}}\frac{\dd x}{\eta(x)}
  \ge\frac{|D_{k-1}|}{\sup_{D_{k-1}}\eta}
  \ge\frac1{\lambda_{k+1}}.
\end{equation}
Since $\lambda_k\downarrow0$, it follows that
$\int_0^R(-2V(x))^{-1/2}\,\dd x=\infty$.  Thus $\gamma$ is defined on
$(-\infty,0]$ and $\gamma(s)\to0$ as $s\to-\infty$.

Along $\gamma$, \eqref{eq:arbitrary-u0} gives
\[
  L(\gamma(s),\dot\gamma(s))
  =-2V(\gamma(s))
  =\frac{\dd}{\dd s}u_0(\gamma(s)).
\]
Using $\gamma$ as a competitor in the discounted representation formula and integrating by
parts, we obtain
\[
  u_\lambda(R)
  \le \int_{-\infty}^0 e^{\lambda s}
     \frac{\dd}{\dd s}u_0(\gamma(s))\,\dd s
  =u_0(R)-\lambda\int_{-\infty}^0
     e^{\lambda s}u_0(\gamma(s))\,\dd s.
\]
Since both $\gamma$ and $u_0$ are increasing on the relevant intervals,
$s\mapsto u_0(\gamma(s))$ is nondecreasing.  We have
\begin{align}
  u_0(R)-u_\lambda(R)
  &\ge \lambda\int_{-\infty}^0 e^{\lambda s}u_0(\gamma(s))\,\dd s \notag\\
  &\ge \lambda\int_{-1/\lambda}^0 e^{\lambda s}u_0(\gamma(s))\,\dd s  \ge (1-e^{-1})u_0(\gamma(-1/\lambda)).
  \label{eq:arbitrary-error}
\end{align}

For every $k\ge1$, the supports of $\psi_j$, $j\ge k$, are contained in $(0,r_k)$.  Hence,
\[
  u_0(r_k)
  =\int_0^{r_k}\sqrt{-2V(x)}\,\dd x \ge\sum_{j=k}^\infty\int_{r_{j+1}}^{r_j}\psi_j(x)\,\dd x
   =\sum_{j=k}^\infty(\alpha_j-\alpha_{j+1})
   =\alpha_k.
\]

Fix $k\ge2$ and let $\lambda\in[\lambda_{k+1},\lambda_k]$.  Then
$1/\lambda\le1/\lambda_{k+1}$, so \eqref{eq:arbitrary-travel} implies
$\gamma(-1/\lambda)\ge r_k$.
Since $u_0$ is increasing on $[0,R]$,
\[
  u_0(\gamma(-1/\lambda))\ge u_0(r_k)\ge\alpha_k.
\]
Therefore, by
\eqref{eq:arbitrary-error}, \eqref{eq:arbitrary-lambda}, and the monotonicity of $\omega$,
\[
  u_0(R)-u_\lambda(R)
  \ge(1-e^{-1})\alpha_k
  \ge\omega(\lambda_k)
  \ge\omega(\lambda).
\]
The intervals $[\lambda_{k+1},\lambda_k]$, $k\ge2$, cover $(0,\lambda_2]$.  We conclude that
\[
  \|u_\lambda-u_0\|_\infty
  \ge u_0(R)-u_\lambda(R)
  \ge \omega(\lambda)
\]
for every $0<\lambda\le\lambda_2$, which proves
\eqref{eq:arbitrary-slow-goal}.

\section{Sharpness of the logarithmic rate: a two-well model}\label{sec:two-well}

We now show that the logarithmic loss in Theorem~\ref{thm:finite-main} is
optimal.  The example is an asymmetric two-well mechanical model; see
\cite[Section~5.3]{ZavBook} for related discussions.  The sharp
lower bound comes from the local branch structure of the discounted solution
near the equilibrium $x=1/2$, where the selected critical solution is
strictly negative.

Choose
$V\in C^\infty(\T)$ such that
\[
 V\le0,\qquad
 V^{-1}(0)=\left\{0,\frac12\right\},\qquad
 V''(0)<0,\quad V''\left(\frac12\right)<0.
\]
Set
\[
 Q(x):=\sqrt{-2V(x)},\qquad
 A:=\int_0^{1/2}Q(s)\,\dd s,\qquad
 B:=\int_{1/2}^1Q(s)\,\dd s,
\]
and choose the two sides asymmetrically so that
$A<B$.
Set
\[
 a:=\frac{3A+B}{2}.
\]
Then $2A<a<A+B$.  Consider
\begin{equation}\label{eq:twowell-H}
 H_a(x,p)=\frac12(p+a)^2+V(x).
\end{equation}

\begin{lemma}\label{lem:twowell-geometry}
For the Hamiltonian \eqref{eq:twowell-H}, the critical value is $0$, and
\[
 \widetilde{\A}(H_a)
 =\left\{(0,-a),\left(\frac12,-a\right)\right\}.
\]
The two points are distinct static classes and are hyperbolic equilibria.
Moreover, with
\begin{equation}\label{eq:twowell-Delta}
\Delta:=\frac a2-A=\frac{B-A}{4}>0,
\end{equation}
the selected critical solution satisfies
\begin{equation}\label{eq:twowell-u0-values}
u_0(0)=0,\qquad
u_0\left(\frac12\right)=-\Delta,
\end{equation}
and
\begin{equation}\label{eq:twowell-u0-branch}
u_0(x)=\int_0^x\bigl(Q(s)-a\bigr)\,\dd s,
\qquad
u_0'(x)+a=Q(x),
\qquad 0\le x\le\frac12.
\end{equation}
\end{lemma}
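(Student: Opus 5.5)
The plan is to verify each claim directly, using the one-dimensional structure and the characterization \eqref{eq:DFIZ-finite} of $u_0$ from the proof of Lemma~\ref{lem:active-representation}. The argument has three parts: the critical value and the Aubry set, then the Peierls barriers between the two equilibria, then the selected solution $u_0$.

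\emph{Critical value and Aubry set.} I would follow the single-well argument of Section~\ref{sec:one-well}. Since $a<A+B=\int_{\T}Q$, set $\theta:=a/(A+B)\in(0,1)$ and define $w$ by $w'=\theta Q-a$. The right-hand side has zero mean, so $w$ is periodic. It is $C^1$ because $Q=\sqrt{-2V}$ is Lipschitz near the nondegenerate zeros of $V$. Then
\[
H_a(x,w')=\tfrac12(\theta^2-1)Q^2\le0,
\]
with strict inequality off $\{0,\tfrac12\}$, so $c(H_a)\le0$. Since $\max_x\min_pH_a=\max V=0$, we also get $c(H_a)\ge0$, hence $c(H_a)=0$.

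The constant curves at $0$ and at $\tfrac12$ have zero critical action ($L_a(x,0)=-V(x)=0$ there), so both Dirac measures are minimizing. Hence both points lie in the projected Mather set and therefore in $\A$. Strictness of $w$ then gives $\A=\{0,\tfrac12\}$. The energy condition $\tfrac12(p+a)^2=0$ forces $p=-a$.

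The linearization of $\dot x=p+a$, $\dot p=-V'(x)$ at these points has eigenvalues $\pm\sqrt{-V''}$, which are real and nonzero. So both equilibria are hyperbolic.

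\emph{Barrier computation.} This is the main step. By Fenchel's inequality, $\tfrac12v^2-V\ge Q|v|$. Hence for any curve $\gamma$, with lift $\tilde\gamma$ to $\R$,
\[
\int\bigl(L_a(\gamma,\dot\gamma)+c\bigr)\,\dd s\ \ge\ \int Q(\gamma)|\dot\gamma|\,\dd s-a\bigl(\tilde\gamma(\text{end})-\tilde\gamma(\text{start})\bigr).
\]
Conversely, equality is approached by zero-energy orbits $\dot\gamma=\pm Q(\gamma)$, which connect the equilibria in infinite time. Joining them with short pieces gives long-time competitors whose action converges to the lower bound, so the $\liminf$ defining $h$ equals the infimum over lifted paths.

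Minimizing over monotone lifted paths, and noting that an extra full turn adds at least $(A+B)-a>0$ or $A+B+a$, gives
\[
h(0,\tfrac12)=\min\{A-\tfrac a2,\;B+\tfrac a2\}=A-\tfrac a2=-\Delta,
\qquad
h(\tfrac12,0)=\min\{B-\tfrac a2,\;A+\tfrac a2\}.
\]
Both candidates in $h(\tfrac12,0)$ are positive, since $a<2B$. Therefore
\[
h(0,\tfrac12)+h(\tfrac12,0)=\min\{A+B-a,\;2A\}>0,
\]
so the two points are distinct static classes.

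\emph{Selected solution.} Both components are equilibria, so $\mu_i$ are Dirac masses and \eqref{eq:DFIZ-finite} reads
\[
u_0(x)=\min\{h(0,x),\,h(\tfrac12,x)\}.
\]
Evaluating at the two equilibria gives
\[
u_0(0)=\min\{0,h(\tfrac12,0)\}=0,
\qquad
u_0(\tfrac12)=\min\{-\Delta,0\}=-\Delta.
\]

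For $0\le x\le\tfrac12$, I would compare the candidate $\int_0^x(Q-a)$, which is the rightward path from $0$, with the other paths.
\begin{itemize}
\item The leftward path from $\tfrac12$ exceeds the candidate by $A-2\int_0^xQ+\tfrac a2\ge\tfrac a2-A=\Delta>0$.
\item The rightward path from $\tfrac12$ around the circle exceeds it by $B-\tfrac a2>0$.
\item Paths from $0$ going leftward, or with extra windings, are larger by the same bookkeeping.
\end{itemize}
This yields \eqref{eq:twowell-u0-branch}, and differentiating gives $u_0'+a=Q$.

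The delicate point is justifying that $h$ equals the infimum of the lifted length functional, in particular the $\liminf$ in time. I would handle it via the Fenchel lower bound together with explicit near-equilibrium heteroclinic competitors; the exponential approach to hyperbolic points makes the extra time cost negligible.
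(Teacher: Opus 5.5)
Your proof is correct, but it takes a different route from the paper's.

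\textbf{The paper's route.} The paper does no computation by hand. It quotes \cite[Section~5.3]{ZavBook}, with $X=1/2$, $f^+=Q$, $\alpha=2A$ and $2A<a<A+B$, for three facts: $c(H_a)=0$, $\A(H_a)=\{0,1/2\}$, and $u_0(x)=\int_0^x(Q-a)$ on $[0,1/2]$. It then separates the static classes by a subsolution trick. The subsolution $w$ with $w'=\vartheta Q-a$ has increment $w(1/2)-w(0)=\vartheta A-a/2$, while $u_0(1/2)-u_0(0)=A-a/2$. Since all critical subsolutions have the same increment between two points of one static class, the classes are distinct.

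\textbf{Your route.} You make the argument self-contained.
\begin{enumerate}[label=(\roman*)]
\item The Fenchel bound $\frac12v^2-V\ge Q|v|$ reduces the critical action to a $Q$-weighted length minus $a$ times the lifted displacement.
\item This gives the barriers explicitly: $h(0,1/2)=-\Delta$ and $h(1/2,0)=\min\{B-a/2,\,A+a/2\}>0$.
\item Distinctness of the static classes follows from $h(0,1/2)+h(1/2,0)=\min\{A+B-a,\,2A\}>0$.
\item The formula for $u_0$ on $[0,1/2]$ follows from the same min-formula \eqref{eq:DFIZ-finite} that the paper uses in Lemma~\ref{lem:active-representation}.
\end{enumerate}
I checked your bookkeeping over windings, including both bullet comparisons against $h(1/2,\cdot)$, and it is right.

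\textbf{The step you flag as delicate.} You need $h$ to equal the infimum of the action over all durations. You do not need exponential-approach estimates for this. Every barrier you use starts at an equilibrium $y\in\{0,1/2\}$ with $L_a(y,0)+c=0$. So any finite-time competitor can be padded by waiting at $y$ at zero cost, and then $h_t$ is bounded by its action for every sufficiently large $t$. Equivalently, $h(y,\cdot)$ equals the Ma\~n\'e potential whenever $y\in\A$.

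\textbf{Comparison.} Your route is independent of the external reference and produces explicit barrier values, including $h(1/2,0)$, which the paper never needs. The paper's route buys brevity, and its increment comparison separates the static classes without computing any barrier.
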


\begin{proof}
In the notation of \cite[Section~5.3]{ZavBook}, we have
$X=1/2$, $f^+=Q$, and
\[
 \alpha=\frac1X\int_0^X f^+(s)\,\dd s=2A.
\]
Since $2A<a<A+B=\int_\T Q$, the results there give
$c(H_a)=0$, $\A(H_a)=\{0,1/2\}$, and
\[
 u_0(x)=\int_0^x(Q(s)-a)\,\dd s,
 \qquad 0\le x\le\frac12.
\]
This yields \eqref{eq:twowell-u0-values} and
\eqref{eq:twowell-u0-branch}.  Since the lifted Aubry set lies in
$H_a^{-1}(0)$, the momentum above $0$ and $1/2$ is $-a$.

It remains to separate the two static classes.  Set
$\vartheta:=a/(A+B)\in(0,1)$ and let $w$ be the periodic $C^1$
function with $w'=\vartheta Q-a$.  Then
\[
 H_a(x,w'(x))
 =-\frac{1-\vartheta^2}{2}Q(x)^2\le0,
\]
so $w$ is a critical subsolution.  On the other hand,
\[
 u_0\left(\frac12\right)-u_0(0)=A-\frac a2,
 \qquad
 w\left(\frac12\right)-w(0)=\vartheta A-\frac a2.
\]
These two increments are different.  Since all critical subsolutions have
the same increment between two points in the same static class, $0$ and
$1/2$ belong to distinct static classes.

Finally, the linearization at either equilibrium is
\[
 \begin{pmatrix}
 0&1\\
 -V''(x)&0
 \end{pmatrix},
\]
whose eigenvalues are real and nonzero because $V''(0)$ and
$V''(1/2)$ are negative.
\end{proof}

Since the Aubry set consists of two equilibria, their projected Mather measures are
$\delta_0$ and $\delta_{1/2}$.  We have
\[
 m_1=u_0(0)=0,
 \qquad
 m_2=u_0(1/2)=-\Delta<0.
\]
Thus the component at $0$ is active, whereas the component at $1/2$ is not.

The discounted equation in the present example is
\[
 \lambda u_\lambda
 +\frac12(u_\lambda'+a)^2+V(x)=0
 \qquad\text{in }\T.
\]
The source of the logarithm is the square-root splitting of the two branches
of this equation near the well $x=1/2$.  Near this point the critical solution
satisfies $u_0'+a=Q$, whereas the discounted equation has two possible roots
for $u_\lambda'+a$.  As $x$ increases toward $1/2$, the discounted solution
may in principle change from the positive root to the negative one.
The following elementary one-dimensional observation shows that such a change
can occur at most once and only in this direction.

\begin{lemma}\label{lem:twowell-branch}
Let $J\subset\T$ be a closed arc on which
$u_\lambda<0$, and identify $J$ with a compact interval in $\R$.  At every
differentiability point of $u_\lambda$ in $J$,
\[
 u_\lambda'(x)+a
 =\pm\sqrt{Q(x)^2-2\lambda u_\lambda(x)}.
\]
Moreover, the negative root cannot occur to the left of the positive root.
Thus there exists $\sigma\in J$ such that, up to a set of measure
zero,
\[
 u_\lambda'(x)+a=\sqrt{Q(x)^2-2\lambda u_\lambda(x)}
 \quad\text{for }x<\sigma,
\]
and
\[
 u_\lambda'(x)+a=-\sqrt{Q(x)^2-2\lambda u_\lambda(x)}
 \quad\text{for }x>\sigma.
\]
Here $\sigma=\inf J$ corresponds to the negative root almost everywhere on
$J$, and $\sigma=\sup J$ to the positive root almost everywhere on $J$.
\end{lemma}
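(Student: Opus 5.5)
The plan is to combine the algebraic form of the equation at differentiability points with the semiconcavity of $u_\lambda$. Since $H_a$ is Tonelli, $u_\lambda$ is Lipschitz, semiconcave, and differentiable almost everywhere. At every differentiability point the equation holds pointwise:
\[
 \tfrac12\bigl(u_\lambda'(x)+a\bigr)^2=-V(x)-\lambda u_\lambda(x)=\tfrac12\bigl(Q(x)^2-2\lambda u_\lambda(x)\bigr).
\]
On $J$ we have $u_\lambda<0$, so the radicand satisfies $Q^2-2\lambda u_\lambda\ge -2\lambda u_\lambda>0$. This gives the displayed two-root formula. It also gives a uniform positive lower bound $g(x):=\sqrt{Q(x)^2-2\lambda u_\lambda(x)}\ge c_J>0$ on $J$, using continuity of $u_\lambda$ and compactness of $J$. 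Since $g$ is continuous, the two branches $a\pm g$ are continuous and separated by a gap of at least $2c_J$.

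The key step is to rule out a switch from the negative to the positive root as $x$ increases. Suppose $x_1<x_2$ are differentiability points with $u_\lambda'(x_1)+a=-g(x_1)$ and $u_\lambda'(x_2)+a=+g(x_2)$. Semiconcavity means $u_\lambda(x)-Cx^2/2$ is concave locally, so $u_\lambda'-Cx$ is nonincreasing on the set of differentiability points. This gives
\[
 u_\lambda'(x_2)\le u_\lambda'(x_1)+C(x_2-x_1).
\]
In other words, the derivative can only jump downward, with upward increments controlled linearly. If $x_2-x_1$ is small compared with $2c_J/C$, the jump $g(x_1)+g(x_2)\ge 2c_J$ violates this. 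For a general pair, I would use a continuity argument. Let $D$ be the set of differentiability points, which has full measure, and define $s(x)=\pm1$ on $D$ according to the root. On any interval where the derivative exists a.e., the Lipschitz function $u_\lambda$ is the integral of its derivative. I would define $\sigma:=\sup\{x\in J: s=+1 \text{ on a full-measure subset of } [\inf J,x]\}$. To the right of $\sigma$, the root must be $-1$ a.e.: otherwise there are points of $\{s=-1\}$ arbitrarily close to $\sigma$ with points of $\{s=+1\}$ just beyond them. That is a $-\to+$ switch over an arbitrarily short distance, which is impossible by the semiconcavity inequality above, since the gap stays $\ge 2c_J$ while $C(x_2-x_1)\to0$. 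More directly, the semiconcavity bound applied to any pair $x_1<x_2$ with $x_2-x_1<2c_J/C$ forbids such a configuration. A general pair $x_1<x_2$ can be chained through finitely many short steps: pick intermediate differentiability points with gaps below $2c_J/C$. Somewhere along the chain the sign must change from $-$ to $+$ across a short step, which is a contradiction.

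Once no $-\to+$ transition exists, the set $\{s=+1\}$ lies to the left of $\{s=-1\}$ up to null sets. This gives $\sigma$ with the stated properties, and the degenerate cases $\sigma=\inf J$ and $\sigma=\sup J$ cover the situations where only one root occurs. I expect the main obstacle to be justifying the semiconcavity constant and the uniform gap $c_J>0$; both are routine here, since the strict negativity of $u_\lambda$ on $J$ supplies the gap and the Tonelli structure supplies semiconcavity.
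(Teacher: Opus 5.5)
Your proof is correct, but it takes a different route from the paper. The paper does not use semiconcavity. It sets $v(x)=u_\lambda(x)+ax$ and shows that $v$ has no interior local minimum in $J$. At such a point $y$, the linear function $\phi(x)=u_\lambda(y)-a(x-y)$ would touch $u_\lambda$ from below. The supersolution inequality would then give $\lambda u_\lambda(y)+V(y)\ge0$, which is impossible since $u_\lambda(y)<0$ and $V\le0$. So $v$ is unimodal: nondecreasing to the left of a maximum point $\sigma$ and nonincreasing to the right. Since $|v'|=\sqrt{Q^2-2\lambda u_\lambda}>0$ a.e., the sign of $u_\lambda'+a$ is forced on each side.

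You replace this one-point supersolution test by the global fact that $u_\lambda$ is semiconcave with linear modulus. That fact holds here, via the discounted representation formula for the smooth Tonelli Hamiltonian $H_a$, but it is an extra input you should cite. You then need the quantitative gap $c_J>0$ and a chaining argument to exclude every $-\to+$ switch. The two mechanisms are the same in spirit: semiconcavity forbids convex kinks, and convex kinks are exactly what the supersolution test at a local minimum of $v$ rules out.

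The paper's version is more economical. It uses only the viscosity supersolution property and Lipschitz continuity, and it needs no uniform lower bound on the radicand to order the branches. It also identifies $\sigma$ explicitly as a maximizer of $u_\lambda+ax$. Your version has the merit of working for any semiconcave function whose derivative takes values in two branches separated by a positive gap.

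One presentational point: your first ``continuity argument'', with $\sigma$ defined through full-measure subsets of $[\inf J,x]$, is imprecise. Points of $\{s=+1\}$ to the right of $\sigma$ need not lie ``just beyond'' points of $\{s=-1\}$. It is also unnecessary. The chaining argument alone shows that there is no pair $x_1<x_2$ in $D\cap J$ with $s(x_1)=-1$ and $s(x_2)=+1$. After that, $\sigma:=\sup\{x\in D\cap J: s(x)=+1\}$, or $\sigma:=\inf J$ if this set is empty, gives the stated conclusion at every differentiability point, not just almost everywhere.
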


\begin{proof}
The solution $u_\lambda$ is Lipschitz and hence differentiable almost
everywhere.  At a differentiability point, the discounted equation gives the
two roots above. Since $u_\lambda<0$, the radicand is strictly positive.

Set
$v(x):=u_\lambda(x)+ax$.
The function $v$ has no local minimum in the interior of $J$.  Indeed, if
$y\in J$ were a local minimum, then
$\phi(x):=u_\lambda(y)-a(x-y)$
would touch $u_\lambda$ from below at $y$.  The viscosity supersolution
inequality would give
$\lambda u_\lambda(y)+V(y)\ge0$,
contradicting $u_\lambda(y)<0$ and
$V(y)\le0$.

The absence of an interior local minimum implies
$v(y)\ge\min\{v(x),v(z)\}$ whenever $x<y<z$. Otherwise, $v$ would attain
its minimum on $[x,z]$ at an interior point. Choose $\sigma\in J$ at
which $v$ attains its maximum. Taking $z=\sigma$ shows that $v$ is
nondecreasing to the left of $\sigma$, while taking $x=\sigma$ shows that
$v$ is nonincreasing to the right.

Since $v$ is absolutely continuous and
\[
 |v'(x)|=\sqrt{Q(x)^2-2\lambda u_\lambda(x)}>0
\]
for almost every $x\in J$, monotonicity forces the positive root almost
everywhere on the left of $\sigma$ and the negative root almost everywhere
on the right.  The endpoint choices of $\sigma$ give the two cases in which
only one root is used.
\end{proof}

We can now prove the sharp lower bound.

\begin{proposition}
For the Hamiltonian \eqref{eq:twowell-H} constructed above, there exist
$C_0>0$ and $\lambda_0>0$ such that
\begin{equation}\label{eq:twowell-lower}
 \|u_\lambda-u_0\|_\infty
 \ge C_0\lambda|\log\lambda|
 \qquad\text{for }0<\lambda\le\lambda_0.
\end{equation}
Consequently, Theorem~\ref{thm:finite-main} gives
\[
 \|u_\lambda-u_0\|_\infty
 \asymp\lambda|\log\lambda|.
\]
\end{proposition}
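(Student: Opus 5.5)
The plan is to work on a closed arc $J$ around $x=1/2$ on which $u_\lambda<0$, and to read off the rate from the branch structure in Lemma~\ref{lem:twowell-branch}. By \eqref{eq:twowell-u0-values}, $u_0(1/2)=-\Delta<0$. Hence there is a fixed $\delta>0$ such that $u_0\le-\Delta/2$ on $J:=[1/2-\delta,1/2+\delta]$. Since $u_\lambda\to u_0$ uniformly (indeed at rate $O(\lambda|\log\lambda|)$ by Theorem~\ref{thm:finite-main}), we get $u_\lambda\le-\Delta/4$ on $J$ for small $\lambda$. Lemma~\ref{lem:twowell-branch} then gives a switching point $\sigma=\sigma_\lambda\in J$. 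Near $1/2$ the well is hyperbolic, so $c_1|x-1/2|\le Q(x)\le c_2|x-1/2|$ on $J$, after shrinking $\delta$.

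First I localize $\sigma$. Suppose $\sigma<1/2-\ell$ for some $\ell>0$. On $(\sigma,1/2-\ell/2)\cap J$ the negative root gives $u_\lambda'+a\le-Q$, while \eqref{eq:twowell-u0-branch} gives $u_0'+a=Q$. Hence $(u_\lambda-u_0)'\le-2Q\le-2c_1(1/2-x)$ there. Integrating over the interval $[1/2-\ell,1/2-\ell/2]$, the difference $u_\lambda-u_0$ drops by at least $c\ell^2$. If $\sigma\ge1/2-\delta$ this is the whole estimate; otherwise I integrate from $\sigma$ instead, and the drop is only larger. The two-sided bound $-C\lambda\le u_\lambda-u_0\le C\lambda|\log\lambda|$ of Theorem~\ref{thm:finite-main} limits any drop to $C\lambda|\log\lambda|$. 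Therefore $\ell\le C(\lambda|\log\lambda|)^{1/2}$, which gives
\[
 \sigma\ge \tfrac12-\ell_\lambda,\qquad \ell_\lambda:=C_1\sqrt{\lambda|\log\lambda|}.
\]

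Next, on $[1/2-\delta,1/2-\ell_\lambda]$ the positive root holds almost everywhere. Using $-2\lambda u_\lambda\ge\lambda\Delta/2$ and the elementary inequality $\sqrt{Q^2+\epsilon}-Q\ge\epsilon/(2\sqrt{Q^2+\epsilon})$, I get
\[
 (u_\lambda-u_0)'(x)\ \ge\ \frac{c\lambda}{Q(x)+\sqrt\lambda}\ \ge\ \frac{c\lambda}{(1/2-x)+\sqrt\lambda}.
\]
Integrating from $1/2-\delta$ to $1/2-\ell_\lambda$ gives a gain of at least
\[
 c\lambda\log\frac{\delta}{\ell_\lambda+\sqrt\lambda}\ \ge\ c\lambda\Bigl(\tfrac12|\log\lambda|-\tfrac12\log|\log\lambda|-C\Bigr)\ \ge\ c'\lambda|\log\lambda|.
\]
The lower bound $u_\lambda-u_0\ge-C\lambda$ at the left endpoint, from Theorem~\ref{thm:finite-main}, then yields
\[
 (u_\lambda-u_0)(1/2-\ell_\lambda)\ \ge\ c'\lambda|\log\lambda|-C\lambda\ \ge\ C_0\lambda|\log\lambda|
\]
for small $\lambda$. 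This proves \eqref{eq:twowell-lower}. Combined with the upper bound in \eqref{eq:intro-finite-rate}, it gives $\|u_\lambda-u_0\|_\infty\asymp\lambda|\log\lambda|$.

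I expect the main obstacle to be the localization of $\sigma$, since a priori the discounted solution could switch to the negative branch early. The argument above handles this by feeding the already established upper rate back in. The only point needing care is that the cost $\ell_\lambda\sim\sqrt{\lambda|\log\lambda|}$ shrinks the logarithmic gain only by a factor of about $1/2$, which the computation above confirms. A secondary check is the measure-zero caveat in Lemma~\ref{lem:twowell-branch}. Since $u_\lambda$ and $u_0$ are Lipschitz, hence absolutely continuous, the almost-everywhere derivative inequalities integrate without difficulty.
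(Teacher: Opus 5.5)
Your proposal is correct and follows essentially the same route as the paper. You first localize the branch switch $\sigma_\lambda$ to within $O(\sqrt{\lambda|\log\lambda|})$ of $1/2$, by feeding the two-sided bound of Theorem~\ref{thm:finite-main} into the $-2Q$ decay of $u_\lambda-u_0$ on the negative branch. You then integrate the positive-branch gain $\sqrt{Q^2+c\lambda}-Q\ge c\lambda/(y+\sqrt\lambda)$ starting from a fixed point where $u_\lambda-u_0\ge-C\lambda$.

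The differences from the paper are cosmetic: you use a symmetric arc $J$, you integrate over $[1/2-\ell,1/2-\ell/2]$ instead of $[\sigma_\lambda,1/2]$, and you use the bound $\lambda/(y+\sqrt\lambda)$ instead of $\lambda/y$ for $y\ge r_\lambda$. One small point: since $\sigma\in J$ forces $1/2-\sigma\le\delta$, your case ``otherwise I integrate from $\sigma$'' never actually arises.
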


\begin{proof}
Let $\Delta>0$ be given by \eqref{eq:twowell-Delta}.  Since $u_0(1/2)=-\Delta$, choose $\rho>0$ so small that
\[
 u_0(x)\le-\frac{3\Delta}{4}
 \qquad\text{for }x\in\left[\frac12-\rho,\frac12\right].
\]
Since $u_\lambda\to u_0$ uniformly, for all sufficiently small $\lambda$,
\begin{equation}\label{eq:twowell-ulambda-negative}
 u_\lambda(x)\le-\frac{\Delta}{2}
 \qquad\text{for }x\in\left[\frac12-\rho,\frac12\right].
\end{equation}
In particular Lemma~\ref{lem:twowell-branch} applies on this whole interval.
Let $\sigma_\lambda\in[\frac12-\rho,\frac12]$ be the transition point
provided by Lemma~\ref{lem:twowell-branch}.  Thus
$\sigma_\lambda=1/2$ is allowed when the positive branch is used throughout
the interval, and $\sigma_\lambda=1/2-\rho$ is allowed when the negative
branch is used throughout.  Almost everywhere,
\begin{equation}\label{eq:twowell-branch-sigma}
 \begin{cases}
 u_\lambda'+a=\sqrt{Q^2-2\lambda u_\lambda},&1/2-\rho<x<\sigma_\lambda,\\
 u_\lambda'+a=-\sqrt{Q^2-2\lambda u_\lambda},&\sigma_\lambda<x<1/2.
 \end{cases}
\end{equation}
Set
\[
 w_\lambda:=u_\lambda-u_0,
 \qquad
 \ell_\lambda:=\frac12-\sigma_\lambda.
\]

We first estimate how far $\sigma_\lambda$ can lie from $1/2$.  On
$(\sigma_\lambda,1/2)$, by
\eqref{eq:twowell-u0-branch} and \eqref{eq:twowell-branch-sigma},
\[
 w_\lambda'
 =-\sqrt{Q^2-2\lambda u_\lambda}-Q
 \le-Q
\]
almost everywhere.  Hence
\begin{equation}\label{eq:twowell-switch-integral}
 w_\lambda(\sigma_\lambda)-w_\lambda(1/2)
 \ge\int_{\sigma_\lambda}^{1/2}Q(x)\,\dd x.
\end{equation}
The nondegeneracy $V''(1/2)<0$ implies
\[
 Q(1/2-y)=\sqrt{-2V(1/2-y)}
 =\sqrt{-V''(1/2)}\,y+o(y)
 \qquad\text{as }y\to0^+.
\]
After decreasing $\rho$ once more, there are constants $k_1,k_2>0$ such
that
\begin{equation}\label{eq:twowell-Q-linear}
 k_1y\le Q(1/2-y)\le k_2y
 \qquad\text{for }y\in[0,\rho].
\end{equation}
Therefore
\begin{equation}\label{eq:twowell-ell-lower}
 \int_{\sigma_\lambda}^{1/2}Q(x)\,\dd x
 =\int_0^{\ell_\lambda}Q(1/2-y)\,\dd y
 \ge\frac{k_1}{2}\ell_\lambda^2.
\end{equation}
On the other hand, Theorem~\ref{thm:finite-main} applies to this Hamiltonian,
since its Aubry set consists of the two hyperbolic equilibria found in
Lemma~\ref{lem:twowell-geometry}.  We have
\[
 -C\lambda\le w_\lambda(x)
 \le C\lambda|\log\lambda|
 \qquad\text{for }x\in\T.
\]
Combining this estimate with \eqref{eq:twowell-switch-integral} and
\eqref{eq:twowell-ell-lower}, and using $|\log\lambda|\ge1$, we obtain
\begin{equation}\label{eq:twowell-ell}
 \ell_\lambda
 \le C\sqrt{\lambda|\log\lambda|}.
\end{equation}
For small $\lambda$, \eqref{eq:twowell-ell} gives
$\sigma_\lambda>1/2-\rho$.  Hence the negative branch cannot be used
throughout the interval. If it is present, it is confined to an interval of
length $O(\sqrt{\lambda|\log\lambda|})$ adjacent to $1/2$.  The endpoint
case $\sigma_\lambda=1/2$, corresponding to the positive branch throughout,
is still allowed.

We now integrate the error created by that positive branch.  From \eqref{eq:twowell-ulambda-negative},
\[
 \sqrt{Q(x)^2-2\lambda u_\lambda(x)}
 \ge\sqrt{Q(x)^2+\Delta\lambda}
 \qquad\text{for }x\in\left[\frac12-\rho,\frac12\right].
\]
On $(1/2-\rho,\sigma_\lambda)$, using again
\eqref{eq:twowell-u0-branch},
\[
 w_\lambda'=\sqrt{Q^2-2\lambda u_\lambda}-Q.
\]
We have
\begin{equation}\label{eq:twowell-positive-integral}
 \begin{aligned}
 w_\lambda(\sigma_\lambda)
 &\ge w_\lambda(1/2-\rho)
 +\int_{1/2-\rho}^{\sigma_\lambda}
 \left(\sqrt{Q(x)^2+\Delta\lambda}-Q(x)\right)\,\dd x\\
 &\ge -C\lambda
 +\int_{\ell_\lambda}^{\rho}
 \left(\sqrt{Q(1/2-y)^2+\Delta\lambda}-Q(1/2-y)\right)\,\dd y,
 \end{aligned}
\end{equation}
where in the last line we used the lower bound in
Theorem~\ref{thm:finite-main} at the fixed point $1/2-\rho$.

For $s\ge0$,
\[
 \frac{\dd}{\dd s}\left(\sqrt{s^2+\Delta\lambda}-s\right)
 =\frac{s}{\sqrt{s^2+\Delta\lambda}}-1<0.
\]
By the upper bound in \eqref{eq:twowell-Q-linear},
\begin{equation}\label{eq:twowell-integrand-compare}
 \sqrt{Q(1/2-y)^2+\Delta\lambda}-Q(1/2-y)
 \ge
 \sqrt{k_2^2y^2+\Delta\lambda}-k_2y.
\end{equation}
Set
\[
 r_\lambda
 :=\max\left\{\ell_\lambda,
 \frac{\sqrt{\Delta\lambda}}{k_2}\right\}.
\]
By \eqref{eq:twowell-ell},
\begin{equation}\label{eq:twowell-r}
 r_\lambda\le C\sqrt{\lambda|\log\lambda|}
\end{equation}
for all sufficiently small $\lambda$.  If $y\ge r_\lambda$, then
$k_2y\ge\sqrt{\Delta\lambda}$, and therefore
\[
 \sqrt{k_2^2y^2+\Delta\lambda}\le\sqrt2\,k_2y.
\]
Rationalizing the square root now gives
\begin{equation}\label{eq:twowell-one-over-y}
 \sqrt{k_2^2y^2+\Delta\lambda}-k_2y
 =\frac{\Delta\lambda}
 {\sqrt{k_2^2y^2+\Delta\lambda}+k_2y} \ge
 \frac{\Delta}{(\sqrt2+1)k_2}\frac{\lambda}{y}.
\end{equation}
For small $\lambda$, \eqref{eq:twowell-r} also gives $r_\lambda<\rho$.
Using \eqref{eq:twowell-integrand-compare} and
\eqref{eq:twowell-one-over-y} in \eqref{eq:twowell-positive-integral}, we
obtain
\begin{equation}\label{eq:twowell-log-integral}
 w_\lambda(\sigma_\lambda)
 \ge
 -C\lambda
 +C_1\lambda\log\frac{\rho}{r_\lambda}
\end{equation}
for a constant $C_1>0$ independent of $\lambda$.

Finally, \eqref{eq:twowell-r} gives
\[
 \log\frac{\rho}{r_\lambda}
 \ge \log\frac{\rho}{C\sqrt{\lambda|\log\lambda|}}
 =\frac12|\log\lambda|-\frac12\log|\log\lambda|-C,
\]
for a constant $C$ independent of $\lambda$.
Since $\log|\log\lambda|=o(|\log\lambda|)$ as $\lambda\to0^+$, after
reducing $\lambda_0$ we have
\[
 \log\frac{\rho}{r_\lambda}\ge\frac14|\log\lambda|.
\]
The $O(\lambda)$ term in \eqref{eq:twowell-log-integral} is then absorbed by
the logarithmic term, and we conclude that
\[
 w_\lambda(\sigma_\lambda)
 \ge C_0\lambda|\log\lambda|
\]
for some $C_0>0$.  This proves \eqref{eq:twowell-lower}.  The upper
bound follows from Theorem~\ref{thm:finite-main}.
\end{proof}

\section*{Acknowledgements}

The author is grateful to Professor Hiroyoshi Mitake for helpful comments and suggestions, to Professor Hung V. Tran for sharing the construction
used in Subsection~\ref{subsec:arbitrarily-slow}, and to Professor Jun Yan and
Dr.~Kai Zhao for helpful discussions related to this problem. The author is
supported by the JSPS grant: KAKENHI \# 26KF0103 and the National Natural Science
Foundation of China (Grant No.~12571197).

\section*{Declarations}

\noindent {\bf Conflict of interest statement:} The author states that there is no conflict of interest.

\medskip

\noindent {\bf Data availability statement:} Data sharing is not applicable to this article as no datasets were generated or analyzed during the current study.

\medskip

\noindent {\bf AI declaration:} The author used OpenAI's ChatGPT as an auxiliary tool for checking calculations, improving the presentation of some arguments, and polishing the English. All mathematical arguments were independently verified by the author, who takes full responsibility for the content.

\end{document}